\newtheorem{theorem}{Theorem}[section]
\newtheorem{lemma}[theorem]{Lemma} 
\newtheorem{conjecture}[theorem]{Conjecture}
\newtheorem{corollary}[theorem]{Corollary} 
\theoremstyle{definition}
\theoremstyle{remark}
\newtheorem{remark}[theorem]{Remark} 
\newcommand{\R}{\mathds{R}}
\newcommand{\rr}{\mathds{R}}
\newcommand{\C}{\mathbb{C}}
\newcommand{\CP}{\mathbb{C}\mathrm{P}}
\newcommand{\RP}{\mathds{R}\mathrm{P}}
\newcommand{\PP}{\mathbb{P}}
\newcommand{\M}{\mathcal{M}}
\newcommand{\Z}{\mathbb{Z}}
\newcommand{\F}{\mathbb{F}}
\newcommand{\Sym}{\mathfrak{S}}
\newcommand{\complexModule}[1]{\C_{#1}}
\newcommand{\grassmannC}[2]{G_{#1}(\C^{#2})}
\newcommand{\grassmannR}[2]{G_{#1}(\rr^{#2})}
\newcommand{\FHindex}[3]{\mathrm{ind}_{#1}^{#2}(#3)}
\newcommand{\FHindexBig}[3]{\mathrm{ind}_{#1}^{#2}\Big(#3\Big)}
\newcommand{\deletedJoin}[2]{(#1)^{* #2}_{\Delta}}
\newcommand{\deletedJoinNoBrackets}[2]{#1^{* #2}_{\Delta}}
\def\keywords{\xdef\@thefnmark{}\@footnotetext}
\DeclareMathOperator{\id}{\mathrm{id}}
\DeclareMathOperator{\pt}{\mathrm{pt}}
\DeclareMathOperator{\supp}{\mathrm{supp}}
\DeclareMathOperator{\conv}{\mathrm{conv}}
\DeclareMathOperator{\codim}{\mathrm{codim}}
\DeclareMathOperator{\linspan}{\mathrm{span}}
\DeclareMathOperator{\spanR}{\mathrm{span}_{\R}}
\DeclareMathOperator{\spanC}{\mathrm{span}_{\C}}
\DeclareMathOperator{\rank}{\mathrm{rk}}
\begin{document}

\title{Complex analogues of the Tverberg--Vre\'cica conjecture and central transversal theorems}


\author[Sadovek]{Nikola Sadovek} 
\address{Institute of Mathematics, Freie Universität Berlin, Germany}
\email{nikolasdvk@gmail.com,~nikola.sadovek@fu-berlin.de}

\author[Sober\'on]{Pablo Sober\'on} 
\address{Baruch College and The Graduate Center, City University of New York, New York, USA.}
\email{psoberon@gc.cuny.edu}

\thanks{The research of N. Sadovek is funded by the Deutsche Forschungsgemeinschaft (DFG, German Research Foundation) under Germany's Excellence Strategy – The Berlin Mathematics Research Center MATH+ (EXC-2046/1, project ID 390685689, BMS Stipend). The research of P. Sober\'on was supported by NSF CAREER award no. 2237324 and a PSC-CUNY Trad B award.}


\begin{abstract}
The Tverberg--Vre\'cica conjecture claims a broad generalization of Tverberg's classical theorem.  One of its consequences, the central transversal theorem, extends both the centerpoint theorem and the ham sandwich theorem.  In this manuscript, we establish complex analogues of these results, where the corresponding transversals are complex affine spaces. The proofs of the complex Tverberg--Vre\'cica conjecture and its optimal colorful version rely on the non-vanishing of an equivariant Euler class. Furthermore, we obtain new Borsuk--Ulam-type theorems on complex Stiefel manifolds. These theorems yield complex analogues of recent extensions of the ham sandwich theorem for mass assignments by Axelrod-Freed and Sober\'on, and provide a direct proof of the complex central transversal theorem.
\end{abstract}

\maketitle

\subjclass{\noindent\textit{2020 Mathematics Subject Classification.} 52A35, 55N91, 55R91.\\}
\keywords{\noindent\textit{Key words and phrases.} Central transversal theorem, Tverberg--Vre\'cica conjecture, Euler Class, Fadell--Husseini index, Equivariant topology.}

\section{Introduction}

Rado's centerpoint theorem \cite{Rado:1946ud} and Steinhaus' ham sandwhich theorem \cite{Steinhaus1938} are cornerstone results in discrete and computational geometry. 
The methods used to prove these theorems, along with their numerous extensions, have had a great impact in the development of topological combinatorics \cites{matouvsek2003borsuk, RoldanPensado2022, Holmsen:2017uf}. In particular, a remarkable common generalization of the ham sandwich theorem and the centerpoint theorem -- the central transversal theorem -- was proven independently by \v{Z}ivaljevi\'c and Vre\'cica and by Dolnikov \cites{Zivaljevic1990, Dolnikov:1992ut}.

\begin{theorem}[\v{Z}ivaljevi\'c and Vre\'cica 1990, Dolnikov 1992]
    Let $0 \le k < d$ be integers.  Given $k+1$ probability measures $\mu_0, \dots, \mu_{k}$ on $\rr^d$, there exists a $k$-dimensional affine subspace $V$ such that every half-space $H$ that contains $V$ satisfies
    \[
    \mu_i (H) \ge \frac{1}{d-k+1}
    \]
    for each $i = 0, \dots, k$.
\end{theorem}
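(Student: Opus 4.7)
The plan is to reformulate the theorem as a bundle-section problem over the real Grassmannian $G_k(\R^d)$ and close the argument with an Euler-class computation. For a $k$-dimensional affine subspace $V \subset \R^d$ with linear direction $W \in G_k(\R^d)$, the quotient map $\pi_W : \R^d \to \R^d/W$ collapses $V$ to a single point $v(V) \in \R^d/W$, and the closed half-spaces of $\R^d$ containing $V$ correspond bijectively to the closed half-spaces of the $(d-k)$-dimensional space $\R^d/W$ containing $v(V)$. Thus the conclusion is equivalent to $v(V)$ being a centerpoint, in the $1/(d-k+1)$ sense, of each pushforward $(\pi_W)_*\mu_i$. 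By the centerpoint theorem in dimension $d-k$, the set $C_i(W) \subseteq \R^d/W$ of such centerpoints is a non-empty, compact, convex subset of the fiber of the rank-$(d-k)$ quotient bundle $\xi := \gamma^\perp$ over $G_k(\R^d)$.

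Assuming the measures are regular (absolutely continuous with continuous densities on a common compact support; the general case would be recovered by a weak-$*$ approximation at the end), the multisections $W \mapsto C_i(W)$ are Hausdorff-continuous, and I would fix continuous Steiner-point selections $s_i : G_k(\R^d) \to \xi$ with $s_i(W) \in C_i(W)$. Assembling the $k$ differences, I form the global section
\[
\Phi : G_k(\R^d) \longrightarrow \xi^{\oplus k}, \qquad W \longmapsto \bigl( s_1(W) - s_0(W),\ \ldots,\ s_k(W) - s_0(W) \bigr)
\]
of a rank-$k(d-k)$ bundle over a base of the same dimension. Any zero of $\Phi$ forces $s_0(W) = \cdots = s_k(W) \in \bigcap_i C_i(W)$, and the $k$-flat with direction $W$ through this common point is the desired transversal.

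The obstruction to $\Phi$ being nowhere zero is the mod-$2$ top Stiefel--Whitney class $w_{k(d-k)}(\xi^{\oplus k}) = w_{d-k}(\xi)^k$ of $\xi^{\oplus k}$. In Schubert-calculus language, $w_{d-k}(\xi) = \sigma_{(d-k)}$, and iterated Pieri shows that the only way to successively append $k$ horizontal strips of size $d-k$ starting from the empty partition while remaining in the $k \times (d-k)$ box is to fill it row by row, giving $\sigma_{(d-k)}^k = \sigma_{((d-k)^k)} = [\pt] \neq 0$ in $H^{k(d-k)}(G_k(\R^d); \Z/2)$. Hence $\Phi$ must vanish somewhere, completing the proof modulo the approximation step.

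The main obstacle I anticipate is not the topology, which is essentially combinatorial, but the analytic bookkeeping: establishing Hausdorff-continuity of the centerpoint multisections $W \mapsto C_i(W)$ under the regularity hypothesis, and showing that weak-$*$ limits of centerpoint flats remain centerpoint flats for the limiting measures, which would let the approximation step go through for arbitrary probability measures. An alternative, more classical route is the \v{Z}ivaljevi\'c--Vre\'cica configuration-space / test-map framework on the Stiefel manifold $V_{d-k+1}(\R^{d+1})$ with its Weyl-group action, which ultimately reduces to an equivalent Euler-class non-vanishing statement and is the kind of setup that generalizes cleanly to the complex analogues announced in the abstract.
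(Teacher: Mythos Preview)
Your proposal is correct, and in fact matches what the paper itself identifies as the ``earliest proofs'' of this classical result: an Euler-class (top Stiefel--Whitney class) obstruction over the real Grassmannian. Note, however, that the paper does not give its own proof of this theorem; it is stated as known background attributed to \v{Z}ivaljevi\'c--Vre\'cica and Dolnikov.

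Where a comparison is meaningful is with the paper's proof of the \emph{complex} analogue (Theorem~\ref{thm:complex-central-transversal}). There the authors do not work over the Grassmannian but over the Stiefel manifold $W_{d-k}(\C^d)$, reducing to a Borsuk--Ulam-type statement (Theorem~\ref{thm:equiv}) established via Fadell--Husseini index computations. This is exactly the ``alternative, more classical route'' you flag in your last paragraph. What your Grassmannian approach buys is a clean obstruction-theoretic picture with the Schubert-calculus identity $\sigma_{(d-k)}^k=[\pt]$ doing all the work; what the Stiefel-manifold approach buys is that it sidesteps the continuous-selection and approximation bookkeeping you rightly worry about (one works directly with barycenters of centerpoint sets, which depend continuously on the frame) and, more importantly for this paper, it extends uniformly to flags (Theorem~\ref{thm:central-for-flags}) and to mass-assignment versions (Theorem~\ref{thm:complex-fairy-bread}). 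Your Euler-class computation does reappear in spirit in the paper, but on the Tverberg--Vre\'cica side: Lemma~\ref{lem:euler class of C-grassmannian} is precisely the complex analogue of $w_{d-k}(\gamma^\perp)^k\neq 0$, and the paper uses it there rather than for the central-transversal theorem.
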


The case $k=0$ is Rado's centerpoint theorem, while the case $k=d-1$ is the ham sandwich theorem.  The value $1/(d-k+1)$ is referred to as the ``depth'' of the transversal.  Since the ham sandwich theorem is a consequence of the Borsuk--Ulam theorem, we can expect the topological methods needed to prove its generalizations to also increase in complexity.  Indeed, the earliest proofs of the central transversal theorem relied on explicit computations of characteristic classes of some associated vector bundles, or the computation of the Fadell--Husseini index of some $(\Z_2)^k$-equivariant spaces.  However, simpler proofs have been discovered since then, using only Borsuk--Ulam type theorems for Stiefel manifolds \cite{Manta2024}.

The goal of this manuscript is to establish new analogues of the Tverberg--Vre\'cica conjecture (see Conjecture \ref{conj:Tverberg-Vrecica}), including an ``optimal colorful" version in the sense of Blagojevi\'c--Matschke--Ziegler \cite{blagojevic2011optimal}, as well as the central transversal theorem.  In our results, the transversals are required to be complex spaces. Our motivation is partially driven by recent advances in piercing problems for families of convex sets using flats. One classical example is the Goodman--Pollack characterization of families of convex sets in $\rr^d$ with a real hyperplane transversal \cite{Goodman1988}.  This has been shown by Holsmen \cite{Holmsen2022} to have a colorful generalization. McGinnis \cites{McGinnis2023, mcginnis2023complex} showed that the Goodman--Pollack characterization has a complex extension as well, by providing necessary and sufficient conditions for existence of a complex hyperplane transversal for a family of convex sets in $\C^d$.

The combinatorial structure of $k$-dimensional affine spaces of $\C^d$ is interesting.  By identifying $\C^d$ with $\rr^{2d}$, every complex $k$-dimensional affine space corresponds to a real $2k$-dimensional affine space, but the converse is not true.  Therefore, in a complex analogue for complex $k$-flats in $\C^d$ of the central transversal theorem a key question is whether it behaves like the real case for $k$-flats in $\rr^d$ or like the real case for $2k$-flats in $\rr^{2d}$.  We obtain a mixed version, where the number of measures corresponds to case for $k$-flats in $\rr^d$ but the depth of the transversal corresponds to what we would expect for $2k$-flats in $\rr^{2d}$.  As a hyperplane in $\C^d$ does not split $\C^d$ into two connected components, we use real halfspaces to measure the depth of a flat.

\begin{restatable}[Complex central transversal]{theorem}{thmCentral}
\label{thm:complex-central-transversal}
	Let $0 \le k < d$ be integers and let $\mu_0, \dots, \mu_{k}$ be probability measures in $\C^d$, which are absolutely continuous with respect to the Lebesgue measure. Then, there exists a $k$-dimensional affine complex subspace $V \subseteq \C^d$ such that for every closed affine real halfspace $H \subseteq \C^d$ containing $V$ we have 
    \[
        \mu_i(H) \ge \frac{1}{2d-2k+1}
    \]
    for each $i = 0, \dots, k$. 
\end{restatable}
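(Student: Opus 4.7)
The plan is to reduce the theorem to finding a common centerpoint for the pushforwards of the $\mu_i$'s onto $\C^{d-k}$, and then deduce this from the non-vanishing of an Euler class on the complex Grassmannian $G_{d-k}(\C^d)$.

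First, I would parametrize complex $k$-flats in $\C^d$ as $V(F, t) = \{x \in \C^d : \langle x, v_j\rangle = t_j,\ j = 1, \dots, d-k\}$, where $F = (v_1, \dots, v_{d-k})$ is an orthonormal complex $(d-k)$-frame in $\C^d$ and $t \in \C^{d-k}$; two pairs define the same flat if and only if they differ by the right $U(d-k)$-action $(F, t)\cdot U = (FU, U^* t)$. For each frame $F$, set $\pi_F: \C^d \to \C^{d-k}$, $x \mapsto (\langle x, v_j\rangle)_j$, and define the pushforward $\nu_i^F := (\pi_F)_*\mu_i$. Any unit real normal to $V(F, t)$ takes the form $\xi = \sum_j c_j v_j$ with $c \in S(\C^{d-k}) \cong S^{2(d-k)-1}$, and a direct computation yields
\[
\mu_i\bigl(H(V(F,t), \xi)\bigr) \;=\; \nu_i^F\bigl(\{y \in \C^{d-k} : \mathrm{Re}\langle y - t, c\rangle \geq 0\}\bigr).
\]
Consequently, the theorem is equivalent to the existence of a pair $(F, t)$ such that $t$ is a centerpoint of depth $\tfrac{1}{2(d-k)+1}$ for each of the $k+1$ measures $\nu_0^F, \dots, \nu_k^F$ on $\C^{d-k} \cong \R^{2(d-k)}$. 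Let $C_i(F) \subseteq \C^{d-k}$ be the corresponding centerpoint set, which is nonempty, compact, and convex by the classical centerpoint theorem in $\R^{2(d-k)}$; a short change of variables gives $C_i(FU) = U^* C_i(F)$, and the absolute continuity of $\mu_i$ makes $C_i(F)$ depend continuously on $F$ in the Hausdorff metric.

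It thus suffices to find a frame $F$ with $\bigcap_{i=0}^{k} C_i(F) \neq \emptyset$. I would pick a continuous, $U(d-k)$-equivariant selection $c_i: V_{d,d-k}(\C) \to \C^{d-k}$ with $c_i(F) \in C_i(F)$ — for instance the Steiner point of $C_i(F)$, which is continuous in Hausdorff distance and equivariant under unitary transformations — and consider the map
\[
\Psi: V_{d,d-k}(\C) \longrightarrow (\C^{d-k})^k,\qquad F \mapsto \bigl(c_1(F) - c_0(F),\ \dots,\ c_k(F) - c_0(F)\bigr).
\]
Then $\Psi$ is continuous and satisfies $\Psi(FU) = U^*\Psi(F)$ diagonally, so it descends to a section of the complex vector bundle $\gamma_{d-k}^{\oplus k}$ over $G_{d-k}(\C^d)$, where $\gamma_{d-k}$ is the tautological rank-$(d-k)$ bundle. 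The Euler class satisfies $e(\gamma_{d-k}^{\oplus k}) = c_{d-k}(\gamma_{d-k})^k$, which is (up to sign) the point class of $G_{d-k}(\C^d)$: indeed $c_{d-k}(\gamma_{d-k}^*)^k$ is Poincaré dual to the set of $(d-k)$-subspaces contained in $k$ generic complex hyperplanes of $\C^d$, which consists of a single plane. Hence the section must vanish at some $F_0$, forcing $c_0(F_0) = c_1(F_0) = \dots = c_k(F_0)$; this common value lies in $\bigcap_i C_i(F_0)$, and $V(F_0, c_0(F_0))$ is the desired flat.

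The main obstacle is the topological input — establishing that $c_{d-k}(\gamma_{d-k})^k$ is nonzero and packaging the common-centerpoint condition as a section of $\gamma_{d-k}^{\oplus k}$ in such a way that a zero of the section genuinely produces a common point in all the $C_i(F)$. Continuity and equivariance of the selection $c_i(F)$ is a secondary technicality handled by the Steiner point together with the absolute continuity of $\mu_i$. I expect the paper's new Borsuk--Ulam-type theorems on complex Stiefel manifolds to provide an alternative, perhaps more flexible, route to the same topological conclusion, avoiding the explicit selection step.
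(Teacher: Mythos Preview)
Your argument is correct. The reduction to a common centerpoint for the pushforwards to $\C^{d-k}$ is exactly what the paper does, and your topological input $c_{d-k}(\gamma_{d-k})^k\neq 0$ in $H^*(G_{d-k}(\C^d))$ is precisely the paper's Lemma~7.3 (there stated as $e(\gamma_n^\C)^{d-n}\neq 0$ with $n=d-k$); your Schubert-calculus justification via Poincar\'e duality is a valid alternative to the paper's Leray--Hirsch/splitting computation.

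Where you diverge from the paper's \emph{direct} proof of the theorem is in the choice of symmetry group and target obstruction. You use the full $U(d-k)$-equivariance of the selection (Steiner point) to descend from the Stiefel manifold to the Grassmannian and then invoke the Euler class of $\gamma_{d-k}^{\oplus k}$. The paper's Section~3 instead keeps the test map on $W_{d-k}(\C^d)$, uses only the maximal torus $(S^1)^{d-k}$ (via the coordinatewise scalar action), and appeals to a Borsuk--Ulam-type theorem (Theorem~5.1) proved by a Fadell--Husseini index computation. In effect your route is a direct specialization of the machinery the paper develops in Sections~6--7 for the complex Tverberg--Vre\'cica theorem (the bundle $W_{k+1}(\gamma_B)\cong\gamma_B^{\oplus k}$ there is exactly your $\gamma_{d-k}^{\oplus k}$), bypassing the discretization/approximation step. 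The trade-off: your Grassmannian/Euler-class argument is cleaner for the bare central-transversal statement and ties in neatly with the classical Dolnikov--\v{Z}ivaljevi\'c proofs, while the paper's torus-equivariant Stiefel approach is what allows the strengthening to flags (their Theorem~3.2) and to mass-assignment results such as the complex fairy bread theorem, where the relevant test map is only $(S^1)^n$-equivariant and does not descend to the Grassmannian.

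One minor point: the barycenter of the centerpoint region (which the paper uses) would serve just as well as the Steiner point and is affinely equivariant, hence $U(d-k)$-equivariant; either choice requires the same continuity of $F\mapsto C_i(F)$, which the paper also takes for granted under the absolute-continuity hypothesis.
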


In Section \ref{sec:central transversal} we also show that both the number of measures $k+1$ and the depth $1/(2d-2k+1)$ are optimal.  The case $k=0$ of Theorem \ref{thm:complex-central-transversal} recovers the (real) centerpoint theorem in even dimensions.  The case $k=d-1$ yields the following complex version of the ham sandwich theorem.

\begin{restatable}[Complex Ham sandwich]{corollary}{corHam} \label{cor: complex ham sandwich}
	Let $d \ge 1$ be an integer and $\mu_0, \dots, \mu_{d-1}$ probability measures in $\C^d$, which are absolutely continuous with respect to the Lebesgue measure. Then, there exists an affine complex hyperplane $V$ such that for for every closed affine real halfspace $H \subseteq \C^d$ containing $V$ we have $\mu_i(H) \ge \frac{1}{3}$ for each $i = 0, \dots, d-1$. 
\end{restatable}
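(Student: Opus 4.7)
The plan is to obtain Corollary \ref{cor: complex ham sandwich} as the immediate specialization of Theorem \ref{thm:complex-central-transversal} to $k = d-1$. With this choice, the required number of measures is $k+1 = d$, matching $\mu_0, \dots, \mu_{d-1}$; the $k$-dimensional complex affine transversal $V$ becomes a $(d-1)$-dimensional complex affine subspace, i.e., a complex affine hyperplane; and the depth guaranteed by the theorem becomes
\[
\frac{1}{2d - 2k + 1} \;=\; \frac{1}{2d - 2(d-1) + 1} \;=\; \frac{1}{3}.
\]
These are precisely the parameters appearing in the statement of the corollary, so the conclusion follows with no additional argument beyond invoking Theorem \ref{thm:complex-central-transversal}.

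It is worth recording why the depth $1/3$ is the natural value here rather than the $1/2$ one might naively expect by analogy with the real ham sandwich theorem. A complex hyperplane $V \subseteq \C^d$ has real codimension $2$ and therefore does not separate $\C^d$ into two pieces; instead, the closed real affine halfspaces containing $V$ are parametrized by the pencil of real hyperplanes through $V$, which forms a circle. One cannot ask for a single bisection in this setting, and the bound $1/3$ reflects the extra $S^1$-worth of halfspaces attached to each complex hyperplane, exactly as predicted by Theorem \ref{thm:complex-central-transversal}.

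The real work is therefore not in this step at all but in establishing Theorem \ref{thm:complex-central-transversal} itself, which the introduction indicates will be handled via Borsuk--Ulam-type theorems on complex Stiefel manifolds. The hard part will be that topological input; once it is in hand, the ham sandwich corollary drops out by the specialization above, and the optimality discussion promised in Section \ref{sec:central transversal} simultaneously shows that neither the count $d$ of measures nor the depth $1/3$ in the corollary can be improved.
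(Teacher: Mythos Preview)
Your proposal is correct and matches the paper's approach exactly: the corollary is stated immediately after Theorem~\ref{thm:complex-central-transversal} as the specialization $k=d-1$, with no separate proof given. Your additional remarks on why the depth is $1/3$ rather than $1/2$ are accurate and in the spirit of the surrounding discussion.
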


The structure of the first proof of Theorem \ref{thm:complex-central-transversal}, and its extension Theorem \ref{thm:central-for-flags}, follows the structure of the proof of Manta and Sober\'on of the central transversal theorem \cite{Manta2024}.  This requires a complex analogue, Theorem \ref{thm:equiv} below, of the Borsuk--Ulam-type theorem for real Stiefel manifolds by Chan, Chen, Frick, and Hull \cite{ChanChenFrickHull20}, which is of independent interest \cite{mcginnis2024necessary}.  This result is slightly weaker then the $\Z_2^n$-version in Remark \ref{rem:Z_2^n}, but showcases a similarity with the main result of Chan et al.

\begin{restatable}{theorem}{thmEquiv}
\label{thm:equiv}
    Let $d \ge n \ge 1$ be integers. Let $W_n(\C^d)$ be the complex Stiefel manifold of orthonormal $n$-frames in $\C^d$.  Then, every continuous $(S^1)^n$-equivariant map
    \begin{equation*}
        f:W_n(\C^d) \longrightarrow \C^{d-1}\oplus \C^{d-2} \oplus  \dots \oplus \C^{d-n}
    \end{equation*}
    hits the origin, where $(S^1)^n$ has the product action on both spaces.
\end{restatable}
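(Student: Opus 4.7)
The plan is to convert the statement into the non-vanishing of an equivariant Euler class, and then compute that class inductively by exploiting the projective-bundle tower underlying the partial flag manifold in question.

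First I would observe that $T := (S^1)^n$ acts freely on $W_n(\C^d)$ (no non-trivial tuple of unit scalars fixes an orthonormal frame), so the quotient $B := W_n(\C^d)/T$ is a smooth compact manifold; in fact it is the partial flag manifold $U(d)/(U(1)^n \times U(d-n))$. Any $T$-equivariant map as in the statement descends to a section of the associated complex vector bundle $\xi := W_n(\C^d) \times_T V$ over $B$, where $V := \C^{d-1}\oplus\cdots\oplus\C^{d-n}$. If $f$ avoided the origin, the descended section would be nowhere zero, forcing the Euler class $e(\xi) \in H^*(B;\Z)$ to vanish. Thus it suffices to show $e(\xi) \neq 0$.

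Next I would split the bundle. Since the $i$-th summand $\C^{d-i}$ of $V$ is acted on only through the $i$-th factor of $T$, by scalar multiplication, it is isomorphic as a $T$-representation to a sum of $d-i$ copies of the character $\rho_i$ that projects to the $i$-th coordinate. Consequently $\xi \cong \bigoplus_{i=1}^n L_i^{\oplus (d-i)}$, where $L_i \to B$ is the complex line bundle associated to $\rho_i$. Setting $x_i := c_1(L_i) \in H^2(B;\Z)$, multiplicativity of the Euler class for complex bundles gives
\[
e(\xi) \;=\; \pm\,\prod_{i=1}^n x_i^{\,d-i}.
\]

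To show this monomial is nonzero, I would use the tower of forgetful maps
\[
B = B_n \longrightarrow B_{n-1} \longrightarrow \cdots \longrightarrow B_1 \longrightarrow B_0 = \pt,
\]
where $B_k := W_k(\C^d)/(S^1)^k$, and each $B_k \to B_{k-1}$ is the projectivization of the rank-$(d-k+1)$ orthogonal-complement bundle, hence a $\CP^{d-k}$-bundle. The projective bundle theorem identifies $H^*(B_k;\Z)$ as a free $H^*(B_{k-1};\Z)$-module with basis $1, x_k, \ldots, x_k^{d-k}$. An induction on $k$ starting from $B_0 = \pt$ then shows $\prod_{i=1}^k x_i^{d-i}$ generates the top cohomology $H^{\mathrm{top}}(B_k;\Z) \cong \Z$; for $k=n$, the dimension check $2\sum_{i=1}^n(d-i) = 2nd - n(n+1) = \dim_{\R} B$ confirms the monomial sits in top degree, so $e(\xi)$ is a generator of top cohomology and in particular nonzero.

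The main point requiring care is aligning the projective-bundle tautological classes with the characters $\rho_i$ up to sign, so that the Leray--Hirsch basis used inductively is genuinely $\{x_k^j\}_{j=0}^{d-k}$ and the top class acquires the clean form $\prod_i x_i^{d-i}$; once these identifications are settled, the computation is formal.
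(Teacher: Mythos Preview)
Your proof is correct and takes a genuinely different route from the paper's. The paper does not pass to the free quotient $B = W_n(\C^d)/T$ at all. Instead it equivariantly extends $f$ to the ambient product of spheres $\prod_{i=1}^n S(\C_i^d)$ (via Gleason's equivariant Tietze theorem) and augments the target by the pairwise inner products $(\langle z_i, z_j\rangle)_{i<j}$, so that the zero set of the extended map $g$ coincides with that of $f$. The contradiction then comes from monotonicity of the Fadell--Husseini index: the index of the target sphere $S(R)$ is the principal ideal generated by $u_1^{d-1}\cdots u_n^{d-n}\prod_{i<j}(u_i-u_j)$, which contains the monomial $(u_1\cdots u_n)^{d-1}$ with coefficient $\pm 1$, and this monomial cannot lie in the index $(u_1^d,\ldots,u_n^d)$ of the product of spheres.

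Your argument is cleaner and more self-contained for the statement as given: exploiting freeness immediately replaces equivariant cohomology by ordinary cohomology of the flag manifold, and the projective-bundle tower yields directly that $\prod_i x_i^{d-i}$ generates $H^{\mathrm{top}}(B;\Z)$, with no index computations or extension tricks. The paper's approach, by contrast, buys portability: since it never uses freeness of the $T$-action and works entirely at the level of Borel cohomology, the identical template yields the $\Z_2^n$-strengthening recorded there (with arbitrary odd real target dimensions $a_i$), which your flag-manifold computation does not deliver without further adaptation.
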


We showcase a second application of Theorem \ref{thm:equiv} by proving a complex analogue of the ``fairy bread sandwich theorem'' of Axelrod-Freed and Sober\'on \cite{AxelrodFreed2022}, Theorem \ref{thm:complex-fairy-bread}.  The fairy bread sandwich theorem is an extension of the ham sandwich theorem for mass assignments.  A mass assignment for $k$-flats in $\rr^d$ is continuous assignment of probability measures to each $k$-dimensional (linear or affine) space of $\rr^d$. We will moreover call it nice if it is absolutely continuous with respect to the Lebesgue measure on each $k$-flat. Recently, the study of mass assignments has gathered interest since many mass partition results have a stronger mass assignment version \cites{Schnider:2020kk, BlagojevicCrabb23, Camarena2024, Blagojevic2023a}.  Our results show that these improvements also carry through to complex analogues.

\begin{restatable}[Complex fairy bread sandwich]{theorem}{thmFairy}\label{thm:complex-fairy-bread}
    Let $d \ge k \ge 1$ be integers. Suppose that $(\pi_{k-1}, \dots, \pi_{d-1})$ is a permutation of $(k-1, \dots, d-1)$ and assume that for each $i=k-1, \dots ,d-1$ we have nice mass assignments $\mu^i_0,\mu^i_{1}, \dots , \mu^i_{\pi_i}$ on the $(i+1)$-dimensional complex affine subspaces of $\C^d$.
    Then, there exists a flag
    \[
        S_{k-1} \subseteq \dots \subseteq S_{d-1} \subseteq S_d=\C^d
    \]
    of affine complex subspaces of dimensions $k-1, \dots, d-1$, respectively, such that each $S_{i}$ is a complex central transversal for the measures $\mu^i_{0}[S_{i+1}], \dots , \mu^i_{\pi_i}[S_{i+1}]$ in $S_{i+1}$. Concretely, we have
    \[
        \mu_0^i[S_{i+1}](H), \dots, \mu_{\pi_i}^i[S_{i+1}](H) \ge \frac{1}{3},
    \]
    for any (real) halfspace $H$ in $S_{i+1}$ containing $S_i$.
\end{restatable}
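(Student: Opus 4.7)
The plan is to follow the strategy of Axelrod-Freed and Sober\'on for the real fairy bread sandwich theorem \cite{AxelrodFreed2022}, with Theorem~\ref{thm:equiv}, applied with $d+1$ in place of $d$, replacing the real Chan--Chen--Frick--Hull Borsuk--Ulam statement. The whole argument reduces to producing a zero of a single equivariant test map on a complex Stiefel manifold.

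Set $n := d-k+1$. I would parametrize affine complex flags $S_{k-1} \subseteq \dots \subseteq S_{d-1}$ in $\C^d$ by orthonormal frames $(\tilde v_1, \dots, \tilde v_n) \in W_n(\C^{d+1})$ via projective homogenization: identify $\C^d$ with the affine hyperplane $\{z \in \C^{d+1} : z_{d+1} = 1\}$, set $\tilde L_{i+1} := \spanC(\tilde v_1, \dots, \tilde v_{d-i})^{\perp} \subseteq \C^{d+1}$, and define $S_i := \tilde L_{i+1} \cap \{z_{d+1} = 1\}$. Away from a measure-zero locus (where some partial frame spans the last coordinate vector, so $\tilde L_m \subseteq \{z_{d+1} = 0\}$), this yields a continuous, surjective, $G$-equivariant map onto the affine flag space, where $G := (S^1)^n$ acts on $W_n(\C^{d+1})$ by multiplying each frame vector by a phase and trivially on flags.

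For each pair $(i,j)$ with $i \in \{k-1, \dots, d-1\}$ and $j \in \{0, \dots, \pi_i\}$, I would construct a continuous $G$-equivariant test map $f_j^i : W_n(\C^{d+1}) \to \C_{(d-i)}$ valued in the weight-one character of the $(d-i)$-th circle factor, modelled on the obstruction functionals from the proof of Theorem~\ref{thm:complex-central-transversal}: push $\mu_j^i[S_{i+1}]$ forward to the one-dimensional complex quotient $S_{i+1}/S_i \cong \C$ identified via the marked direction $\tilde v_{d-i}$, and record the Manta--Sober\'on-style complex obstruction to $S_i$ being a depth-$1/3$ central transversal there. The equivariance with respect to $\C_{(d-i)}$ comes from the substitution $\theta \mapsto \lambda_{d-i}\theta$ in the test direction on $S^1$ induced by $\tilde v_{d-i} \mapsto \lambda_{d-i} \tilde v_{d-i}$. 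Assembling gives
\[
F := \bigoplus_{i,j} f_j^i : W_n(\C^{d+1}) \longrightarrow V := \bigoplus_{i=k-1}^{d-1} \bigoplus_{j=0}^{\pi_i} \C_{(d-i)};
\]
a zero of $F$ produces a flag with the required property at every level.

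The main obstacle is upgrading Theorem~\ref{thm:equiv} to the Borsuk--Ulam statement that every continuous $G$-equivariant map $W_n(\C^{d+1}) \to V$ hits the origin. Because permuting frame vectors intertwines the $G$-action via the corresponding permutation of circle factors, the Fadell--Husseini ideal $I := \ker\bigl(H^*_G(\pt) \to H^*_G(W_n(\C^{d+1}))\bigr) \subseteq \Z[x_1, \dots, x_n]$ is symmetric in the variables $x_1, \dots, x_n$. Theorem~\ref{thm:equiv} applied to $W_n(\C^{d+1})$ asserts that the Euler class $x_1^d x_2^{d-1} \cdots x_n^k$ of its standard target lies outside $I$. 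Since $\pi$ is a permutation of $(k-1, \dots, d-1)$, the exponents $(\pi_{d-m}+1)_{m=1}^n$ of the Euler class $\prod_m x_m^{\pi_{d-m}+1}$ of $V$ form a permutation of $\{k, k+1, \dots, d\}$---the same multi-set---and in particular $\dim_\R V = (d+k)(d-k+1)$ matches $\dim_\R W_n(\C^{d+1}) - \dim G$. By $\Sym_n$-invariance of $I$, this Euler class is also outside $I$, forcing $F$ to vanish somewhere; the preimage yields the desired flag. The measure-zero degenerate locus of the parametrization can be avoided by a standard perturbation argument using absolute continuity of the measures and the openness of the depth-$1/3$ condition.
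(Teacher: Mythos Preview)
Your approach differs substantially from the paper's. The paper does \emph{not} homogenize: it parametrizes only the linear directions of the flag by a frame $(v_1,\dots,v_{d-k+1})\in W_{d-k+1}(\C^d)$ and determines the affine translations inductively, choosing $S_i\subseteq S_{i+1}$ to pass through the centerpoint of the pushforward of $\mu_0^i[S_{i+1}]$ to the complex line $\spanC\{v_{d-i}\}$. Centrality for $\mu_0^i$ is thus automatic, leaving only $\pi_i$ complex obstructions $\langle v_{d-i},p_j^i-p_0^i\rangle$, $j=1,\dots,\pi_i$, at level $i$; Theorem~\ref{thm:equiv} is then applied to $W_{d-k+1}(\C^d)$ itself. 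Your homogenized route uses $W_{d-k+1}(\C^{d+1})$ and $\pi_i+1$ conditions per level.

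The serious gap is the degenerate locus. When $e_{d+1}\in\spanC(\tilde v_1,\dots,\tilde v_m)$ for some $m$, some $S_{i+1}$ is the empty intersection $\tilde L_{i+2}\cap\{z_{d+1}=1\}$, so $\mu_j^i[S_{i+1}]$ is simply undefined---the mass assignment is a section over $A_{i+1}(\C^d)$, and there is no affine space there to evaluate it on. There is no natural continuous extension of your test map across this locus, and your ``standard perturbation argument'' does not name what is being perturbed or why zeros cannot land there. The paper's inductive-translation scheme sidesteps this completely: every frame in $W_{d-k+1}(\C^d)$ yields a genuine affine flag, so the test map is globally defined and continuous.

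A smaller issue: Theorem~\ref{thm:equiv} asserts that equivariant maps hit zero, not that the Euler class of the standard target lies outside $I=\FHindex{(S^1)^n}{\Z}{W_n(\C^{d+1})}$; the latter does not follow formally from the former. The permutation argument is nonetheless salvageable in two ways. Either observe that the \emph{proof} of Theorem~\ref{thm:equiv} works with the symmetric ideal $(u_1^{d+1},\dots,u_n^{d+1})=\FHindex{(S^1)^n}{\Z}{\prod_i S(\C_i^{d+1})}$ and the Vandermonde factor $\prod_{i<j}(u_i-u_j)$, which together are invariant up to sign under permuting the $u_i$; or, more directly, precompose with the self-diffeomorphism of $W_n(\C^{d+1})$ permuting frame vectors (intertwining the $(S^1)^n$-actions via the same permutation of factors) to reduce your target $V$ to the standard one. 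The paper's own application of Theorem~\ref{thm:equiv} to a permuted target is equally terse on this point, so your making it explicit is a genuine contribution---but the phrasing via $I$ should be corrected.
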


Another pillar of combinatorial geometry is Tverberg's theorem \cite{Tverberg:1966tb}.  Proven in 1966, it is a fundamental result at the crossroads of geometry and topology \cites{Blagojevic:2017bl, Barany:2018fy}.

\begin{theorem}[Tverberg 1966]
    Given $(r-1)(d+1)+1$ points in $\rr^d$, there exists a partition of them into $r$ parts whose convex hulls intersect. 
\end{theorem}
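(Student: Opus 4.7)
The plan is to apply the configuration space / test map paradigm of B\'ar\'any--Shlosman--Sz\H{u}cs. Writing $N = (r-1)(d+1)$ and letting $x_0, \ldots, x_N \in \rr^d$ be the given points, a Tverberg $r$-partition is encoded by a zero of an $\Sym_r$-equivariant test map on the $r$-fold $2$-wise deleted join $K := \deletedJoin{\Delta_N}{r}$. A point of $K$ is a formal sum $\sum_{j=1}^r \lambda_j t^{(j)}$ with $t^{(j)} \in \Delta_N$, weights $\lambda_j \geq 0$ summing to $1$, and pairwise disjoint supports $\supp(t^{(j)})$. The complex $K$ is isomorphic to the $(N+1)$-fold join $[r]^{*(N+1)}$ of the discrete $r$-point set, so it is $(N-1)$-connected of dimension $N$ and carries a free $\Sym_r$-action by permutation of the join factors.

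Lifting each point to $\tilde{x}_i := (x_i, 1) \in \rr^{d+1}$, I would define the equivariant map
\[
    f: K \longrightarrow (\rr^{d+1})^r, \qquad f\Bigl(\sum_j \lambda_j t^{(j)}\Bigr) = \Bigl(\lambda_j \sum_i t^{(j)}_i \tilde{x}_i\Bigr)_{j=1}^r,
\]
and post-compose with the orthogonal projection onto the complement $W_r := \{(y_j) \in (\rr^{d+1})^r : \sum_j y_j = 0\} \cong \rr^{(d+1)(r-1)}$ of the thin diagonal, obtaining $g: K \to W_r$. Unwinding the definition, a zero of $g$ means all $r$ images under $f$ coincide; equating the last coordinates of this common value forces all $\lambda_j$ to be equal to $1/r$, and the remaining coordinates then certify a genuine Tverberg partition of the $x_i$.

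Assuming for contradiction that $g$ were nowhere zero, the induced equivariant map $g/\|g\|: K \to S(W_r) \cong S^{N-1}$ would have to exist. For $r = p$ prime the $\Z_p$-action on both spaces is free, and a Fadell--Husseini index comparison $\ind_{\Z_p}(K) = N > N - 1 = \ind_{\Z_p}(S^{N-1})$ rules this out; the prime-power case $r = p^k$ is handled by \"Ozaydin's $(\Z_p)^k$-equivariant refinement. The hard part, and the main obstacle, is that for $r$ with two distinct prime factors the topological route breaks down --- the topological Tverberg statement itself is false by the Mabillard--Wagner--Frick counterexamples, so the equivariant nonexistence of $g/\|g\|$ can no longer be established by index methods alone. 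To recover the affine theorem for arbitrary $r$, I would fall back on Sarkaria's tensor trick, tensoring each $\tilde{x}_i$ against vectors $v_1, \ldots, v_r \in \rr^{r-1}$ with $\sum_j v_j = 0$ to produce $N+1$ color classes in $\rr^{(d+1)(r-1)}$ each containing the origin in its convex hull, and then invoking B\'ar\'any's colorful Carath\'eodory theorem to extract a rainbow selection summing to zero, which unwinds to the desired Tverberg partition.
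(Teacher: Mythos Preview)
The paper does not prove this statement at all: Tverberg's theorem is merely quoted as a classical result with the reference \cite{Tverberg:1966tb}, serving as background for the Tverberg--Vre\'cica conjecture and the complex analogues that are the paper's actual subject. There is therefore no ``paper's own proof'' to compare against.

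That said, your outline is a correct route to the affine Tverberg theorem for all $r$. The topological portion (deleted join, connectivity of $[r]^{*(N+1)}$, index obstruction for $r$ a prime power) is standard and accurate, and you are right that it cannot by itself handle composite $r$ since the topological Tverberg statement fails there. The Sarkaria tensor trick you invoke at the end is a complete, self-contained proof of the affine theorem for arbitrary $r$ and does not depend on the topological machinery at all; if your aim is simply to establish the theorem as stated, you could drop the first two paragraphs entirely and just run Sarkaria--B\'ar\'any. One small point: in your description of the tensor trick, the $N+1$ ``color classes'' in $\rr^{(d+1)(r-1)}$ are the sets $\{\tilde{x}_i \otimes v_j : j = 1, \ldots, r\}$, one for each vertex $i$, and each such class has the origin in its convex hull because $\sum_j v_j = 0$; the rainbow simplex containing the origin then decodes to the partition. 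You have this right, but it is worth stating the color classes explicitly since that is where the argument lives.
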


Tverberg's theorem is a discrete generalization of Rado's centerpoint theorem, as every point in the intersection of the convex hulls of a Tverberg partition of a set is also a centerpoint of that set.  Just as the Tverberg's theorem generalizes Rado's centerpoint theorem, the Tverberg--Vre\'cica conjecture aims to generalize the central transversal theorem in the same way.  It states the following.

\begin{conjecture}[Tverberg, Vre\'cica 1993]\label{conj:Tverberg-Vrecica}
    Let $0 \le k < d$ be integers, and $r_0, \dots, r_k$ be positive integers.  Assume we are given $k+1$ sets $P_0, \dots, P_k$ of $\rr^d$ such that for each $i$ we have ${|P_i| = (r_i-1)(d-k+1)+1}$.  Then, there exists a $k$-dimensional flat $V\subseteq \rr^d$ and a partition of each $P_i$ into $r_i$ parts such that the convex hull of each part of each partition intersects $V$.
\end{conjecture}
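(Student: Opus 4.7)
The plan is to mount the standard configuration space-test map (CS/TM) attack, which has been effective for the classical Tverberg theorem and its continuous extensions. Set $n_i = (r_i-1)(d-k+1)+1 = |P_i|$, and let
\[
    K_i = \deletedJoin{[n_i]}{r_i}
\]
be the $r_i$-fold $2$-wise deleted join of the discrete set $[n_i]$; a point of $K_i$ encodes an ordered $r_i$-partition of $P_i$ together with a choice of convex combination in each part. Let $G$ parametrize affine $k$-flats in $\rr^d$ (for example, as linear $(k+1)$-subspaces of $\rr^{d+1}$ after embedding $\rr^d$ as $\rr^d \times \{1\}$), and form the configuration space $X = G \times K_0 \times \dots \times K_k$ with the $\Sym_{r_0}\times \dots \times \Sym_{r_k}$-action permuting the join factors.

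Define an equivariant test map
\[
    F : X \longrightarrow \bigoplus_{i=0}^k \bigl(W_{r_i} \otimes \rr^{d-k}\bigr),
\]
where $W_{r_i}$ is the standard $(r_i-1)$-dimensional permutation representation and the $i$-th block of $F$ records the $r_i$ barycenters of the chosen parts of $P_i$ after projection to the orthogonal complement $V^{\perp}$ of the flat encoded by the $G$-coordinate. By construction a zero of $F$ is precisely a Tverberg--Vre\'cica configuration. The task reduces to showing that no $\Sym_{r_0}\times\dots\times\Sym_{r_k}$-equivariant map from $X$ into the complement of the origin exists. One attacks this by computing (or bounding below) the Fadell--Husseini index $\FHindex{H}{*}{X}$ for an appropriate subgroup $H$, or equivalently by verifying that the Euler class of the associated vector bundle over $X/H$ does not vanish — precisely the kind of computation that drives the complex analogues established elsewhere in this paper.

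The main obstacle is that this conjecture is genuinely open in the stated generality, and the CS/TM scheme is known to succeed only when each $r_i$ is a prime power. In that regime one takes $H$ to be a product of elementary abelian subgroups $(\Z_{p_i})^{s_i}$ with $r_i = p_i^{s_i}$; these act freely on the corresponding deleted joins, and the cohomology of their classifying spaces is rich enough to make the obstruction class non-zero, following the strategy of Volovikov, Karasev, and Blagojevi\'c--Matschke--Ziegler. Outside the prime-power regime the symmetric group has no sufficiently large $p$-torus acting freely on $K_i$, the relevant primary obstruction vanishes, and a full proof would require either essentially new equivariant-topological input or a direct geometric argument. My proposal is therefore to establish the prime-power case of the conjecture along these lines while flagging the non-prime-power case as the genuine barrier, in parallel to the situation for ordinary Tverberg-type theorems.
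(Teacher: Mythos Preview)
The statement is presented in the paper as an open conjecture; the paper does not prove it, so there is no proof to compare against. You correctly flag it as open and propose to carry out the CS/TM scheme only in the prime-power regime. That is indeed the known strategy, but your sketch overstates its reach in two substantive ways.

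First, taking $H = \prod_i (\Z_{p_i})^{s_i}$ with possibly \emph{different} primes $p_i$ is not known to work. The nonvanishing of the obstruction is established by an Euler-class computation in $\F_p$-cohomology of a bundle over a Grassmannian for a \emph{single} prime $p$; with mixed primes there is no coefficient ring in which all the relevant classes survive simultaneously. Karasev's result and the Blagojevi\'c--Matschke--Ziegler result you invoke both require all $r_i$ to be powers of the same prime. Second, even with a common prime $p$ there is a parity condition you omit: one needs $p(d-k)$ even (equivalently $p=2$ or $d-k$ even). For odd $p$ and odd codimension the tautological bundle over $\grassmannR{d-k}{d}$ is not $\F_p$-orientable, so the mod $p$ Euler class you need is not even defined, and the scheme genuinely fails at this step with current tools.

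Two smaller technical points: your $K_i$ should be the deleted join $(\Delta_{n_i-1})^{*r_i}_\Delta$ of the simplex, not of the discrete set $[n_i]$ --- the latter does not encode partitions with a convex combination in each part. And your test map as written is missing a $W_{k+1}\otimes\rr^{d-k}$ summand (compare the $W_{m+1}(U)$ term in the paper's Theorem on the general scheme): a zero of your $F$ only says that for each $i$ the $r_i$ projected barycenters coincide at some point $q_i\in V^{\perp}$, but nothing forces $q_0=\dots=q_k$, so you obtain $k+1$ parallel flats rather than a single common one.
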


The case $k=0$ is Tverberg's theorem.  Tverberg and Vre\'cica proved the case $k=d-1$ and a slightly modified version of the case $k=d-2$ \cite{Tverberg:1993ia}. The topological generalization of the conjecture, along with a colorful version, has been confirmed by Karasev \cite{Karasev:2007ib} when all $r_i$ are powers of the same prime $p$ and $p(d-k)=p\cdot\codim_\R V$ is even; the conjecture remains open otherwise. See also \cites{Zivaljevic1999, Vrecica2003} for earlier works by Vre\'cica and \v{Z}ivaljevi\'c. Furthermore, optimal colorful version of Conjecture \ref{conj:Tverberg-Vrecica} was proved by Blagojevi\'c, Matschke and Ziegler \cite{blagojevic2011optimal} when $r_0 = \dots = r_k = p$ and $p$ is a prime, and either $p(d-k)$ is even or $k=0$.

The requirement on the parameters by Karasev is not surprising.  Namely, he proved a stronger statement where the case $k=0$ implies the topological version of Tverberg's theorem, which is known to fail if $r_0$ is not a power of a prime \cites{mabillard2014, Frick:2015wp}.

We show that the topological techniques used by Karasev, as well as those by Blagojevi\'c, Matschke and Ziegler, can be extended to prove many cases of a complex analogue of Conjecture \ref{conj:Tverberg-Vrecica} and its optimal colorful version, again when $p \cdot \codim_\R V$ is even. We split them into separate statements depending on parity of $\codim_\R V$.  For a positive integer $N$, let $\Delta_N$ denote an $N$-dimensional simplex with $N+1$ vertices.

\begin{restatable}[Topological Tverberg--Vre\'cica for complex flats]{theorem}{thmTVeven}\label{thm:strong-Tv-vrecica-complex}
    Let $0 \le k < d$ be integers and $p$ be a prime number. For each $i=0, \dots, k$ let $r_i$ be a power of $p$, let $N_i \coloneqq (r_i-1)(2d-2k+1)$ and $f_i \colon \Delta_{N_i} \longrightarrow \C^d$ a continuous map.
    Then, for each $i=0, \dots, k$ there are points $x_1^i, \dots, x^i_{r_i} \in \Delta_{N_i}$ with pairwise disjoint supports and there is an affine complex $k$-dimensional subspace $V \subseteq \C^d$ such that
    \[
        \bigcup_{i=0}^k\{f_i(x_1^i), \dots, f_i(x^i_{r_i})\} \subseteq V.
    \]
\end{restatable}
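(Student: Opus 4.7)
My plan is to follow Karasev's Euler-class approach~\cite{Karasev:2007ib} to the topological Tverberg--Vre\'cica conjecture, exploiting that complex bundles carry even-dimensional Chern data, so the parity obstruction $p(d-k) \equiv 0 \pmod 2$ needed in Karasev's real theorem becomes automatic in the complex setting and every prime power $r_i$ is allowed. Writing $r_i = p^{\alpha_i}$, set $G_i := (\Z/p)^{\alpha_i}$ with the regular permutation action on $\{1, \dots, r_i\}$, and $G := \prod_{i=0}^k G_i$. On the $G$-space
$$C := \prod_{i=0}^k K_i \times Y, \qquad K_i := (\Delta_{N_i})^{*r_i}_\Delta, \quad Y := \mathrm{AffGr}^{\C}_k(\C^d),$$
where each pairwise deleted join is a free $G_i$-CW complex of dimension $N_i$ on its interior and $Y$ parametrizes affine complex $k$-flats, I use the tautological complex quotient bundle $\nu \to Y$ of rank $d-k$ to build the $G$-equivariant complex vector bundle
$$\eta := \bigoplus_{i=0}^k \overline W_{G_i} \boxtimes \nu,$$
where $\overline W_{G_i}$ is the reduced complex regular representation of $G_i$. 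The continuous $G$-equivariant test section $s$ of $\eta$ is assembled from $((t_j, y_j)_j, V) \mapsto (t_j\,[f_i(y_j)]_V)_j$ projected onto its reduced part; a zero of $s$ outside the degenerate stratum is precisely a Tverberg--Vre\'cica configuration as required.

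If no such configuration exists, $s$ is nowhere zero, forcing the $G$-equivariant Euler class $e_G(\eta) \in H^*_G(C; \F_p)$ to vanish. By multiplicativity $e_G(\eta) = \prod_i e_G(\overline W_{G_i} \boxtimes \nu)$, and the character decomposition $\overline W_{G_i} = \bigoplus_{\chi \ne 1} L_\chi$ further yields
$$e_G(\eta_i) = \prod_{\chi \ne 1} c_{d-k}(L_\chi \otimes \nu) \in H^*_{G_i}(K_i \times Y; \F_p),$$
a product of top complex Chern classes which, via the splitting principle, expands as a polynomial in the generators $t_\chi := c_1(L_\chi) \in H^2(BG_i; \F_p)$ and the Chern roots of $\nu$.

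The crux is to verify $e_G(\eta) \ne 0$. Isolating the leading $t_\chi$-monomial in this expansion produces a Dickson-type invariant whose mod-$p$ coefficient must be shown nonzero. In Karasev's real analogue this coefficient carries a sign depending on the parity of $d-k$, which can kill it when $p(d-k)$ is odd; in our complex setup every Chern class lies in even degree and every bundle has even real rank, so the sign issue never arises and the coefficient survives for every prime power $r_i$. A K\"unneth argument across $i = 0, \dots, k$ then assembles the local contributions into a nonzero global Euler class, forcing $s$ to have a zero, and the disjointness of supports in the deleted joins ensures at least one such zero lies outside the degenerate stratum, yielding the desired configuration. I expect this explicit Chern-class / Dickson computation to be the delicate step; the surrounding framework adapts Karasev's directly.
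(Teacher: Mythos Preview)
Your overall strategy---Karasev's equivariant Euler-class obstruction---is the right one and is also what the paper uses. However, your test section is set up incorrectly. The section $s$ sends $((t_j, y_j)_j, V)$ to the reduced part of $(t_j\,[f_i(y_j)]_V)_j$, and a zero of this only says that all $t_j\,[f_i(y_j)]_V$ equal some common value $c \in \nu|_V$. Nothing forces $c = 0$; when $c \neq 0$ the images $f_i(y_j)$ need not lie on $V$ at all, so zeros of $s$ are \emph{not} Tverberg--Vre\'cica configurations, even outside any ``degenerate stratum.'' Concretely, your bundle $\eta_i$ has real rank $2(r_i-1)(d-k) = N_i - (r_i - 1)$, strictly below $\dim K_i = N_i$; the missing $r_i - 1$ real dimensions are precisely a factor $W_{r_i}(\R)$ recording the barycentric coordinates $t_j$ themselves.

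The paper (following Karasev and Blagojevi\'c--Matschke--Ziegler) works instead over the compact linear Grassmannian $B = \grassmannC{d-k}{d}$ with the bundle $W_{k+1}(\gamma_B) \oplus \bigoplus_i W_{r_i}(\gamma_B \oplus \varepsilon)$, where $\varepsilon$ is the trivial real line bundle: the $\varepsilon$ forces all $t_j = 1/r_i$, and the $W_{k+1}(\gamma_B)$ summand forces the resulting $k+1$ Tverberg centers in $U \in B$ to coincide. Once $\varepsilon$ enters, the relevant bundle is no longer complex, so your claim that ``every Chern class lies in even degree, hence no parity obstruction'' does not apply to the bundle you actually need. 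The paper uses the complex structure only at one isolated point: to show $e(\gamma_B)^k = c_{d-k}^k \neq 0$ in $H^*(\grassmannC{d-k}{d};\F_p)$ for every prime $p$ (this is what replaces Karasev's parity hypothesis). The nonvanishing of the Euler class of each mixed real factor $W_{r_i}(\gamma_B \oplus \varepsilon)$ is obtained by a fixed-point/localization argument, not by your Chern/Dickson computation, which as written computes the Euler class of the wrong (too small) bundle. A further technicality you would have to address is that your base $Y$ is non-compact.
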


As a special case when all maps $f_i$ are affine, we obtain the following.

\begin{corollary}[Tverberg--Vre\'cica for complex flats]\label{cor:convex-complex-tv-vrecica}
    Let $0 \le k < d$ be integers and $p$ be a prime number. For each $i=0, \dots, k$ let $r_i$ be a power of $p$ and let $P_i$ be a set of $(r_i-1)(2d-2k+1)+1$ points in $\C^d$.  Then, there exists a complex $k$-dimensional affine space $V$ of $\C^d$ and a partition of each $P_i$ into $r_i$ parts such that the (real) convex hull of each part of each $P_i$ intersects $V$.
\end{corollary}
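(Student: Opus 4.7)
The plan is to realize Corollary \ref{cor:convex-complex-tv-vrecica} as a direct specialization of the topological result Theorem \ref{thm:strong-Tv-vrecica-complex} to affine maps, and then upgrade the ``disjoint supports'' conclusion to a genuine partition of each $P_i$. So the first step is, for each $i = 0, \dots, k$, to enumerate $P_i = \{q^i_1, \dots, q^i_{N_i+1}\}$ with $N_i = (r_i-1)(2d-2k+1)$ and define $f_i \colon \Delta_{N_i} \to \C^d$ to be the affine map sending the $j$-th vertex of $\Delta_{N_i}$ to $q^i_j$. Every point $x \in \Delta_{N_i}$ is then written as $x = \sum_j \lambda_j(x) e_j$ with $\lambda_j(x) \ge 0$ and $\sum_j \lambda_j(x) = 1$, so its support $\supp(x) = \{j : \lambda_j(x) > 0\}$ indexes a subset of $P_i$ whose convex hull contains $f_i(x)$.

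Next, I would invoke Theorem \ref{thm:strong-Tv-vrecica-complex} with these affine maps $f_i$. It supplies points $x^i_1, \dots, x^i_{r_i} \in \Delta_{N_i}$ with pairwise disjoint supports and a complex $k$-dimensional affine subspace $V \subseteq \C^d$ satisfying
\[
\bigcup_{i=0}^k \{f_i(x^i_1), \dots, f_i(x^i_{r_i})\} \subseteq V.
\]
For each $i$ and each $s = 1, \dots, r_i$, set $A^i_s := \{q^i_j : j \in \supp(x^i_s)\} \subseteq P_i$. Since the $\supp(x^i_s)$ are pairwise disjoint, the $A^i_s$ are pairwise disjoint subsets of $P_i$, and by construction $f_i(x^i_s) \in \conv(A^i_s) \cap V$.

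Finally, I would extend the disjoint collection $A^i_1, \dots, A^i_{r_i}$ to a partition of $P_i$: any point of $P_i$ not appearing in $\bigcup_s A^i_s$ can simply be thrown into $A^i_1$ (or any fixed $A^i_s$). Enlarging a part only enlarges its convex hull, so the previously found intersection point $f_i(x^i_1) \in V$ still belongs to $\conv(A^i_1)$, and all other parts are untouched. The resulting partition of each $P_i$ into $r_i$ parts has the property that the real convex hull of every part meets $V$, which is exactly the claim. There is no real obstacle here beyond the faithful translation; the combinatorial content sits entirely in Theorem \ref{thm:strong-Tv-vrecica-complex}, and the only mild care needed is the cosmetic passage from disjoint supports to a covering partition.
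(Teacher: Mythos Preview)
Your proposal is correct and matches the paper's approach exactly: the paper simply states that the corollary is the special case of Theorem~\ref{thm:strong-Tv-vrecica-complex} when all $f_i$ are affine. Your careful write-up of the passage from disjoint supports to an actual partition is a welcome elaboration of what the paper leaves implicit.
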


Even though we require each $r_i$ to be a power of the same prime $p$, Corollary \ref{cor:convex-complex-tv-vrecica} generalizes Theorem \ref{thm:complex-central-transversal} by a standard approximation argument.  This is because any measure $\mu$ can be approximated by discrete measures supported on finite sets of points, and imposing these sets to have a cardinality a power of $p$ is not a problem.  If we denote $N_i = (r_i-1)(2d-2k-1)+1$, then $r_i = \left\lceil N_i/(2d-2k+1)\right\rceil$.  Given a Tverberg--Vre\'cica partition of a set $P_i$, every half-space containing $V$ must have at least one point of each part of the partition, and therefore contain at least a $(1/(2d-2k+1))$-fraction of $P_i$.  Taking $r_i \to \infty$, we can use the set $P_i$ to approximate a given measure $\mu_i$, and by doing this simultaneously for $k+1$ measures, we obtain the conclusion of Theorem \ref{thm:complex-central-transversal}.

Just as with the real case of the Tverberg--Vre\'cica conjecture, determining if the corollary above holds without conditions on the parameters $r_i$ is an interesting problem.

Next, we state the complex analogue of the optimal colorful version of Conjecture \ref{conj:Tverberg-Vrecica}.

\begin{restatable}[Optimal colorful Tverberg--Vre\'cica for a complex flats]{theorem}{thmTVcolored}
\label{thm:colored-tverberg-vrecica}
    Let $0 \le k < d$ be integers and $p$ a prime number. For each $i=0, \dots, k$, let $P_i$ be a set of $(r_i-1)(2d-2k+1)+1$ points in $\C^d$ colored in such a way that the color classes are of cardinality at most $p-1$.  Then, there exists a complex $k$-dimensional affine space $V$ of $\C^d$ and a partition of each $P_i$ into $p$  rainbow parts (in the sense that each part of $P_i$ contains at most one point of each color) such that the (real) convex hull of each part of each $P_i$ intersects $V$.
\end{restatable}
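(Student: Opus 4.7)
My plan is to adapt the configuration space/test map scheme used by Blagojevi\'c--Matschke--Ziegler~\cite{blagojevic2011optimal} to prove the real optimal colorful Tverberg--Vre\'cica theorem, replacing its real Stiefel factor with the complex Stiefel manifold $W_{d-k}(\C^d)$. The key observation is that in the complex setting $\codim_\R V = 2(d-k)$ is automatically even, so the parity hypothesis $p\cdot\codim_\R V \in 2\Z$ needed for the Euler class computation is satisfied for every prime $p$; this is what allows the theorem to be stated uniformly in $p$.

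For each $i = 0, \dots, k$, let $\mathcal{K}_i$ denote the simplicial complex on $P_i$ whose simplices are the rainbow subsets (equivalently, the join over color classes of discrete point sets). Because the color classes have size at most $p-1$, the $p$-fold $2$-wise deleted join $(\mathcal{K}_i)^{*p}_{\Delta}$ parameterizes ordered rainbow partitions of $P_i$ into $p$ (possibly empty) parts and, by the Vre\'cica--\v{Z}ivaljevi\'c-style connectivity bound for chessboard complexes, is sufficiently highly connected. I would then assemble the configuration space
\[
    X = (\mathcal{K}_0)^{*p}_{\Delta} * \cdots * (\mathcal{K}_k)^{*p}_{\Delta} * W_{d-k}(\C^d),
\]
on which $G = (\Z_p)^{k+1}$ acts by cyclic permutation of the $p$ parts in each partition (with $W_{d-k}(\C^d)$ recording the orthogonal complement of the complex $k$-flat through the origin and the affine translation absorbed by the simplicial factors via barycenters). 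The equivariant test map $T \colon X \to U$ sends a point to the collection of projections, onto the complex orthogonal complement of $V$, of the convex combinations of each of the $p$ parts of each partition, centered at the prospective intersection point with $V$; a zero of $T$ yields the desired complex $k$-flat and rainbow Tverberg partitions. The target $U = \bigoplus_i U_i$ is a complex $G$-representation, with each $U_i$ a copy of the reduced complex regular representation of $\Z_p$ tensored with $\C^{d-k}$, and a dimension count matches $\dim_\R U$ with the top degree on $X$.

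The main obstacle is the non-vanishing of the equivariant Euler class $e_G(U) \in H^{*}_G(X;\F_p)$. Because $U$ is a complex $G$-representation, $e_G(U)$ factors as a product of Chern classes: the contribution from each $U_i$ is a nonzero power of the generator in $H^{*}(B\Z_p;\F_p)$, and the contribution attached to the Stiefel factor is controlled by a complex Borel-type Euler class computation on $W_{d-k}(\C^d)$, in the same spirit as the proof of Theorem~\ref{thm:equiv}. I would then mimic the Borel spectral sequence analysis of~\cite{blagojevic2011optimal}: the high connectivity of the deleted join factors forces the spectral sequence for $X_G \to BG$ to collapse on the relevant page, and the product of Chern classes survives as a nonzero top class in $H^{*}_G(X;\F_p)$. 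Bookkeeping the relevant Chern class monomials in the equivariant cohomology of $X$, and verifying term-by-term that the argument of~\cite{blagojevic2011optimal} carries over to the complex Stiefel setting, is the technical heart of the proof; the parity-free input from the complex structure lets the computation run uniformly in $p$.
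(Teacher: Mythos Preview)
Your strategic instincts are right—the complex structure makes $\codim_\R V$ even so the Euler-class computation runs for every prime, and the rainbow complexes $\mathcal{K}_i$ with the index bound from \cite[Cor.~2.6]{blagojevic2011optimal} are exactly the right input—but the configuration space you write down does not support the test map you describe. In the join $X=(\mathcal{K}_0)^{*p}_\Delta*\cdots*(\mathcal{K}_k)^{*p}_\Delta*W_{d-k}(\C^d)$ a generic point carries barycentric weights on the factors; on the face where the Stiefel weight vanishes there is no frame, hence no ``orthogonal complement of $V$'' onto which to project, and on the face where the $i$-th deleted-join weight vanishes there is no partition of $P_i$ to feed into $T$. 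Your map $T$ is simply not defined on these faces, and there is no equivariant extension that preserves the geometric meaning of a zero. This is also not how \cite{blagojevic2011optimal} set things up: their scheme (and the paper's Theorem~\ref{thm:tverberg vrecica general}) uses a \emph{product} $B\times\prod_i(K_i)^{*p}_\Delta$, and the test map is an equivariant \emph{section of a vector bundle} over it, not a map from a join to a fixed representation.

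Concretely, the paper takes $B=\grassmannC{d-k}{d}$ (the Grassmannian, not the Stiefel manifold) and $K_i\subseteq\Delta_{N_i}$ the rainbow complex, and shows that the mod $p$ Euler class of the bundle $W_{k+1}(\gamma_B)\oplus\bigoplus_i\big(W_p(\gamma_B\oplus\varepsilon)\times(K_i)^{*p}_\Delta\big)\times_{\Z_p}E\Z_p$ is nonzero. The obstruction factors cleanly into two independent pieces: the Fadell--Husseini bound $\FHindex{\Z_p}{\F_p}{(K_i)^{*p}_\Delta}\subseteq H^{>N_i}(B\Z_p;\F_p)$ from \cite{blagojevic2011optimal}, and the purely Grassmannian fact $e(\gamma_{d-k}^\C)^k\neq 0$ in $H^*(\grassmannC{d-k}{d};\F_p)$ (Lemma~\ref{lem:euler class of C-grassmannian}). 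No spectral-sequence analysis on a Stiefel factor is needed. Note also that your target $U$ is missing two summands present in the correct bundle: the piece $W_{k+1}(\gamma_B)$, which forces the $k+1$ projected Tverberg points to coincide, and the trivial line $\varepsilon$ in each $W_p(\gamma_B\oplus\varepsilon)$, which carries the join weights $t_j$ and is what makes the deleted-join test map linear rather than only defined on an open simplex.
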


We also prove a topological version of the complex Tverberg-Vre\'cica conjecture when $\codim_\R V$ is odd and $p=2$. Here we can require $V$ to have a complex structure on one of its real co-dimension one subspaces.

\begin{restatable}[Topological Tverberg--Vre\'cica for a complex flat plus a real line]{theorem}{thmTVodd}
\label{thm:strong-tverberg-vrecica-odd-flats}
    Let $0 \le k < d$ be integers. For each $i=0, \dots, k$, let $r_i$ be a power of 2, $N_i \coloneqq (r_i-1)(2d-2k+2)$ and $f_i \colon \Delta_{N_i} \longrightarrow \C^d$ a continuous map.
    Then, there is a real $(2k-1)$-dimensional affine space $V$ of $\C^d$ which is a Minkowski sum of a complex $(k-1)$-dimensional flat and a real one-dimensional flat and, for each $i=0, \dots, k$, there are points $x_1^i, \dots, x^i_{r_i} \in \Delta_{N_i}$ having pairwise disjoint supports, such that
    \[
        \bigcup_{i=0}^k\{f_i(x_1^i), \dots, f_i(x^i_{r_i})\} \subseteq V.
    \]
\end{restatable}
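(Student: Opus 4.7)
The plan is to extend Karasev's configuration-space/test-map scheme for the Tverberg--Vre\'cica conjecture to this mixed real/complex setting at the prime $p=2$. Write a candidate flat as $V = x + U_0 + \R u$, where $U_0 \subseteq \C^d$ is a complex $(k-1)$-dimensional linear subspace and $u$ is a unit real vector in the real orthogonal complement $U_0^\perp \subseteq \C^d$. Let $Y$ be the total space of the real sphere bundle over the complex Grassmannian $G_{k-1}(\C^d)$ whose fiber over $U_0$ is $S(U_0^\perp)$; this parametrizes the directions of all such flats. The crucial numerical coincidence is that $\codim_\R V = 2d-2k+1$ while $\dim_\R U_0^\perp = 2d-2k+2$, which matches the factor appearing in $N_i = (r_i-1)(2d-2k+2)$.

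For each $r_i = 2^{n_i}$ let $K_i \coloneqq \deletedJoin{\Delta_{N_i}}{r_i}$ carry its natural $(\Z/2)^{n_i}$-action, set $G \coloneqq \prod_{i=0}^{k}(\Z/2)^{n_i}$, and form the configuration space
\[
    W \coloneqq Y \times \prod_{i=0}^{k} K_i,
\]
with $G$ acting coordinate-wise on the second factor and trivially on $Y$. Construct a $G$-equivariant real vector bundle $\eta \to W$ whose fiber at a point with parameters $(U_0, u)$ is a direct sum of copies of $\C^d/(U_0 + \R u)$, indexed so as to record the differences of the $r_i$ weighted sums $\sum_j \lambda^{(i)}_j f_i(t^{(i)}_j)$ corresponding to each part of each partition, after projection by $\pi_{U_0, u} : \C^d \to \C^d/(U_0+\R u)$, together with matching terms ensuring consistency across the $k+1$ indices $i$. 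The resulting $G$-equivariant section $\tau$ of $\eta$ vanishes exactly on a Tverberg--Vre\'cica configuration lying in a common translate of $U_0 + \R u$.

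The problem therefore reduces to proving that the mod-$2$ $G$-equivariant Euler class of $\eta$ is nonzero in $H^*_G(W; \F_2)$. The bundle $\eta$ decomposes as a Whitney sum across the nontrivial characters of the factors $(\Z/2)^{n_i}$, and its Euler class factors accordingly into contributions of two kinds: from each $K_i$, the classical Karasev--Volovikov class in $H^*((\Z/2)^{n_i};\F_2)$ arising from the $G$-equivariant cohomology of the deleted join; from the $Y$-direction, a mod-$2$ Chern monomial of the tautological complex quotient bundle on $G_{k-1}(\C^d)$ paired with the Euler class of the sphere bundle $S(U_0^\perp) \to Y \to G_{k-1}(\C^d)$. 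The main obstacle is verifying that this product does not vanish. The sphere-bundle Euler class is essential here: it supplies the single odd-degree generator needed to account for the odd real codimension $2d-2k+1$ of $V$, which cannot arise from purely Chern-theoretic data. With this additional class folded in, the remaining computation runs in parallel to Karasev's proof of the standard Tverberg--Vre\'cica conjecture in the $p=2$ case and confirms the required non-vanishing.
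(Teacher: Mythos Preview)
Your overall scheme is the right one and matches the paper's, but there is a genuine gap in the Euler-class step that makes the argument fail as written.

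You parametrize directions of $V$ by the \emph{sphere} bundle $Y=S\big((\gamma_{k-1}^{\C})^{\perp}\big)$ over $G_{k-1}(\C^d)$. Over $Y$, the tautological real line bundle spanned by $u$ is \emph{trivial} (the map $(U_0,u)\mapsto u$ is a nowhere-zero section). Hence the rank-$(2d-2k+1)$ bundle $Q$ with fiber $\C^d/(U_0+\R u)\cong (U_0+\R u)^{\perp_\R}$ satisfies
\[
Q\oplus\varepsilon^1\;\cong\;\pi^*\big((\gamma_{k-1}^{\C})^{\perp}\big),
\]
a pullback of a complex bundle. All odd Stiefel--Whitney classes of the right side vanish, so $w_{2d-2k+1}(Q)=e(Q)=0$ in $H^*(Y;\F_2)$. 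Consequently the ``$Y$-direction'' factor in the Euler class of $\eta$, which is $e(Q)^k$, is zero, and your obstruction argument collapses. The ``Euler class of the sphere bundle'' you invoke lives in $H^{2(d-k+1)}(G_{k-1}(\C^d))$, an \emph{even}-degree class; it cannot supply the odd-degree generator you need.

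The fix is to quotient by the antipodal $\Z_2$-action on fibers and work over the \emph{real projectivization} $\PP_\R\big((\gamma_{k-1}^{\C})^{\perp}\big)$. There the tautological real line bundle $\ell$ is nontrivial with $w_1(\ell)=x$, and one gets $\ell\oplus\bar Q\cong\pi^*((\gamma_{k-1}^{\C})^{\perp})$; now $e(\bar Q)$ has a nonzero odd-degree contribution coming from $x$. This is exactly what the paper does, except that the paper parametrizes by the orthogonal complement of $V$: it takes $T\in G_{d-k}(\C^d)$ (the maximal complex subspace of $V^{\perp}$) and a real line $L\subseteq T^{\perp_\C}$, so that $B=\PP_\R\big((\gamma_{d-k}^{\C})^{\perp}\big)$ and the canonical bundle splits as $\gamma_B\cong\pi_1^*\gamma_{d-k}^{\C}\oplus\pi_2^*\gamma_1^{\R}$. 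This yields $e(\gamma_B)=c_{d-k}\cdot x\pmod 2$, and the non-vanishing of $(c_{d-k}\,x)^k$ is then checked directly (Corollary~\ref{cor:euler class mod 2 of 2m + 1 grassmannian}). Your parametrization via $G_{k-1}(\C^d)$ describes the same subspace of $G_{2d-2k+1}(\R^{2d})$, just fibered differently; after projectivizing it would also work, but the paper's dual parametrization gives the cleaner splitting and makes the Euler-class computation transparent.
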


Again, by restricting our attention to the case when all maps $f_i$ are affine, we obtain:

\begin{corollary}[Tverberg--Vre\'cica for complex flats plus a real line]\label{cor:convex-tverberg-vrecica-odd-flats}
    Let $0 \le k < d$ be integers. For each $i=0, \dots, k$ let $r_i$ be a power of two and let $P_i$ be a set of $(r_i-1)(2d-2k+2)+1$ points in $\C^d$.  Then, there exists a real $(2k-1)$-dimensional affine space $V$ of $\C^d$ which is a Minkowski sum of a complex $(k-1)$-dimensional flat and a real one dimensional flat, and a partition of each $P_i$ into $r_i$ parts such that the (real) convex hull of each part of each $P_i$ intersects $V$.
\end{corollary}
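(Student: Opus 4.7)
The plan is to derive Corollary~\ref{cor:convex-tverberg-vrecica-odd-flats} directly from the topological Theorem~\ref{thm:strong-tverberg-vrecica-odd-flats} via the standard affine-map reduction. For each $i=0,\dots,k$, the cardinality of $P_i$ equals $N_i+1$, so one can label the vertices of $\Delta_{N_i}$ by the points of $P_i$ and let $f_i \colon \Delta_{N_i} \to \C^d$ denote the unique affine extension of this labelling.

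Applying Theorem~\ref{thm:strong-tverberg-vrecica-odd-flats} to these maps, we obtain a real $(2k-1)$-dimensional affine space $V \subseteq \C^d$ of the required form (the Minkowski sum of a complex $(k-1)$-flat and a real line) and, for each $i$, points $x_1^i,\dots,x_{r_i}^i \in \Delta_{N_i}$ with pairwise disjoint supports $\supp(x_j^i)$ such that $f_i(x_j^i) \in V$. Let $\sigma_j^i \subseteq P_i$ be the subset corresponding to the vertices of $\supp(x_j^i)$; these are pairwise disjoint nonempty subsets of $P_i$. By affinity of $f_i$, the image $f_i(x_j^i)$ lies in $\conv(\sigma_j^i)$, and hence $\conv(\sigma_j^i) \cap V \neq \emptyset$ for every admissible pair $(i,j)$.

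To promote the collection $\{\sigma_j^i\}_{j=1}^{r_i}$ to a genuine partition of $P_i$, one distributes the leftover points (those indexing vertices not lying in any $\supp(x_j^i)$) arbitrarily among the existing parts, for instance by appending them all to $\sigma_1^i$. Since enlarging a set can only enlarge its convex hull, the transversal property $\conv(\sigma_j^i)\cap V\neq\emptyset$ is preserved under this operation, and one obtains a partition of each $P_i$ into exactly $r_i$ parts whose convex hulls all meet $V$, as required.

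Since all the topological content is already contained in Theorem~\ref{thm:strong-tverberg-vrecica-odd-flats}, this reduction poses no substantive obstacle; the only minor point to verify is the numerical match $|P_i|=N_i+1$, which makes the correspondence between vertices of $\Delta_{N_i}$ and points of $P_i$ work cleanly for every choice of the powers of two $r_i$.
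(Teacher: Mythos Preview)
Your proposal is correct and matches the paper's own argument: the paper simply states that the corollary follows from Theorem~\ref{thm:strong-tverberg-vrecica-odd-flats} by restricting to affine maps $f_i$, and you have spelled out precisely this standard reduction (including the bookkeeping step of distributing leftover vertices to obtain a genuine partition).
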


This version also implies a complex central transversal theorem via the same standard approximation argument.  Note that the theorem below is neither a consequence nor a generalization of Theorem \ref{thm:complex-central-transversal}, which provides a stronger guarantee on the depth of the transversal, yet it affects fewer half-spaces than the theorem below.

\begin{restatable}{theorem}{thmCentralodd}
\label{thm:complex-central-transversal-odd}
    Let $1 \le k < d$ be integers and let $\mu_0, \dots, \mu_{k}$ be probability measures in $\C^d$, which are absolutely continuous with respect to the Lebesgue measure.  Then, there exists a real $(2k-1)$-dimensional affine space $V$ of $\C^d$ which is a Minkowski sum of a complex $(k-1)$-dimensional flat and a real one dimensional flat such that for every closed affine real space $H \subseteq \C^d$ containing $V$ we have 
    \[
    \mu_i (H) \ge \frac{1}{2d-2k+2}
    \]
    for each $i = 0, \dots, k$.
\end{restatable}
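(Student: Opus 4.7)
My plan is to deduce Theorem \ref{thm:complex-central-transversal-odd} from Corollary \ref{cor:convex-tverberg-vrecica-odd-flats} by exactly the discretization-and-limit argument the authors spell out after Corollary \ref{cor:convex-complex-tv-vrecica}, with the ``flat plus line'' Tverberg--Vre\'cica version as the discrete input. For each $m \ge 1$, set $r^{(m)} := 2^m$ and $N_i^{(m)} := (r^{(m)}-1)(2d-2k+2) + 1$, and approximate each $\mu_i$ by an empirical measure $\mu_i^{(m)} := \frac{1}{N_i^{(m)}} \sum_{x \in P_i^{(m)}} \delta_x$ with $|P_i^{(m)}| = N_i^{(m)}$, chosen so that $\mu_i^{(m)} \to \mu_i$ weakly. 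Since each $\mu_i$ is absolutely continuous with respect to Lebesgue measure and thus charges no real affine hyperplane, this weak convergence upgrades to $\mu_i^{(m)}(H) \to \mu_i(H)$ for every closed real halfspace $H \subseteq \C^d$.

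Applying Corollary \ref{cor:convex-tverberg-vrecica-odd-flats} with $r_i = r^{(m)}$ to the $k+1$ sets $P_0^{(m)}, \dots, P_k^{(m)}$ produces a real $(2k-1)$-dimensional affine flat $V^{(m)} \subseteq \C^d$ of the prescribed form (a Minkowski sum of a complex $(k-1)$-flat and a real line) together with partitions of each $P_i^{(m)}$ into $r^{(m)}$ parts whose convex hulls all meet $V^{(m)}$. For any closed real halfspace $H \supseteq V^{(m)}$, each of the $r^{(m)}$ parts of $P_i^{(m)}$ must contain at least one point lying in $H$ (otherwise the entire part, and hence its convex hull, would lie in the convex open complement of $H$, which is disjoint from $V^{(m)}$). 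This yields the depth estimate
\[
    \mu_i^{(m)}(H) \;\ge\; \frac{r^{(m)}}{N_i^{(m)}} \;=\; \frac{1}{2d-2k+2} + O\!\bigl(1/r^{(m)}\bigr).
\]

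Passing to the limit requires a tightness argument ruling out $V^{(m)} \to \infty$: if $\dist(V^{(m)}, 0) \to \infty$ with nearest point $q^{(m)}$ to the origin, then $V^{(m)}$ lies in the affine hyperplane $h^{(m)} := q^{(m)} + (q^{(m)})^\perp$, and the closed halfspace $H^{(m)} := \{x : \langle x, q^{(m)} \rangle \ge |q^{(m)}|^2\}$ contains $V^{(m)}$ yet sits at distance $|q^{(m)}| \to \infty$ from the origin. Tightness of the sequence $(\mu_i^{(m)})_m$, a consequence of weak convergence to $\mu_i$, forces $\mu_i^{(m)}(H^{(m)}) \to 0$, contradicting the previous display. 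Hence, within a fixed compact region of the affine Grassmannian of real $(2k-1)$-flats, we extract a subsequence $V^{(m_j)} \to V$; since the Minkowski-sum form is a closed condition in the Grassmannian, $V$ has the prescribed structure. Finally, for any closed halfspace $H \supseteq V$ and any $\epsilon > 0$, the outward $\epsilon$-enlargement $H_\epsilon$ contains $V^{(m_j)}$ for all large $j$, so $\mu_i(H_\epsilon) = \lim_j \mu_i^{(m_j)}(H_\epsilon) \ge 1/(2d-2k+2)$; sending $\epsilon \to 0$ and using $\mu_i(\partial H) = 0$ gives $\mu_i(H) \ge 1/(2d-2k+2)$. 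The main obstacle is precisely this compactness/tightness step, where one must leverage absolute continuity both to prevent the transversals from escaping to infinity and to ensure that the special ``complex flat plus real line'' structure survives the limit.
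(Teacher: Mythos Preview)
Your discretize-and-limit derivation from Corollary~\ref{cor:convex-tverberg-vrecica-odd-flats} is precisely the route the paper signals in the sentence preceding Theorem~\ref{thm:complex-central-transversal-odd}, so the overall strategy is correct and intended. The proof the paper actually writes out, however, is different: it is a direct configuration-space/test-map argument that bypasses Sections~\ref{sec:tverberg-vrecica}--\ref{sec:euler-classes} entirely. For a frame $(v_0,\dots,v_{d-k})\in W_{d-k+1}(\C^d)$ one sets $U=\linspan_\R\{v_0\}\oplus\linspan_\C\{v_1,\dots,v_{d-k}\}$, lets $p_i\in U$ be the barycentric centerpoint of the pushforward of $\mu_i$ under the orthogonal projection $\C^d\to U$, and assembles a $\Z_2^{d-k+1}$-equivariant test map $W_{d-k+1}(\C^d)\to\R^k\oplus(\C^k)^{d-k}$ recording the differences $p_j-p_0$. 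Remark~\ref{rem:Z_2^n} forces a zero, whence $p_0=\dots=p_k$ and $V=p_0+U^{\perp_\R}$ is the desired flat. Your route uses the Tverberg--Vre\'cica Euler-class machinery as a black box and then needs a compactness/limit argument; the paper's direct proof trades all of that for the single index computation of Remark~\ref{rem:Z_2^n}.

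One genuine (if minor) gap in your writeup: the assertion that the outward enlargement $H_\epsilon$ contains $V^{(m_j)}$ for all large $j$ is false as stated. The linear direction of $V^{(m_j)}$ need not lie in $\partial H$, and an unbounded affine flat whose direction is not parallel to $\partial H$ is contained in \emph{no} translate of $H$. The standard repair is to tilt the halfspace along with the flat: project the outward normal $n$ of $H$ onto the orthogonal complement of the direction of $V^{(m_j)}$ to obtain a unit vector $n_j\to n$, and take $H_j$ to be the closed halfspace with normal $n_j$ whose boundary contains $V^{(m_j)}$. Then $H_j\supseteq V^{(m_j)}$ and (after reducing to the case $\partial H\supseteq V$, which suffices) $H_j\to H$, so weak convergence together with $\mu_i(\partial H)=0$ gives $\mu_i^{(m_j)}(H_j)\to\mu_i(H)$ and the depth bound passes to the limit.
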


The rest of the manuscript is structured as follows. In Section \ref{sec:notation}, we establish the notation and basic facts used in our proofs. In Sections \ref{sec:central transversal} and \ref{sec:fairy bread}, respectively, we prove the complex analogues of the central transversal theorem and the fairy bread sandwich theorem by taking Theorem \ref{thm:equiv} as a black box, which is proven in Section \ref{sec:equivariant}.  We also give a direct proof of Theorem \ref{thm:complex-central-transversal-odd} in Section \ref{sec:central transversal} using the configuration space -- test map scheme, without relying on modifications of the Tverberg--Vre\'cica conjecture.  Then, in Section \ref{sec:tverberg-vrecica} we prove several complex versions of the Tverberg--Vre\'cica are a consequence the non-vanishing the Euler classes of certain vector bundles.  The non-vanishing of said classes is proved in Section \ref{sec:euler-classes}.

\section{Notation and conventions}\label{sec:notation}

In this section, we establish the notation and key facts that will be used throughout the paper.

\subsection*{Real and complex structure} We consider $\C^d$ as a complex vector space equipped with the complex inner product $\langle u, v \rangle_\C = \sum_{j=1}^d u_j \overline{v}_j$. Moreover, for any $v \in \C^d$, we have
\[
	\spanC\{v\} = \spanR\{v, iv\} \textrm{ and } \{v\}^{\perp_{\C}} = \{v, iv\}^{\perp_{\R}}.
\]
In particular, it follows that for vectors $v_1, \dots, v_k \in \C^d$ the complex-orthogonal direct sum
\[
		\C^d = \spanC\{v_1, \dots, v_{k}\} \oplus_{\C} \{v_1, \dots, v_{k}\} ^{\perp_{\C}}
\]
is also a real-orthogonal direct sum
\[
	\R^{2d} = \spanR\{v_1, iv_1, \dots, v_{k}, iv_{k}\} \oplus_{\R} \{v_1, iv_1, \dots, v_{k}, iv_{k}\}^{\perp_{\R}}.
\]

\subsection*{Equivariant cohomology and Fadell--Husseini index}

For a finite group $G$ and a $G$-space $X$, we denote by $X\times_G EG \coloneqq(X \times EG)/G$ the Borel construction. Here $EG$ denotes a contractible free $G$-space. 
Equivariant cohomology with coefficients in a ring $R$ is defined as 
\[
    H^*_G(X;R) \coloneqq H^*(EG\times_G X;R).
\]
In the special case when $X$ is a point, we have $H^*_G(\pt;R) = H^*(BG;R)$, where $BG \coloneqq EG/G$ is the classifying space. From functoriality of cohomology with respect to continuous maps, it follows that equivariant cohomology is functorial with respect to $G$-maps: a $G$-map $X \longrightarrow Y$ induces a ring morphism $H^*_G(Y;R) \longrightarrow H^*_G(X;R)$.
The first projection $X \times EG \longrightarrow X$ induces a map $X\times_G EG \longrightarrow X/G$. If the action of $G$ on a CW-complex $X$ is free, the latter map is a homotopy equivalence.
The second projection $X \times EG \longrightarrow EG$ induces a Borel fibration
\[
    X \longrightarrow X \times_G EG \xrightarrow{~~\pi~~} BG,
\]
and the Fadell--Husseini index of $X$ is defined as
\[
    \FHindex{G}{R}{X}  \coloneqq \ker \Big(\pi^* \colon H^*(BG;R) \longrightarrow H^*(EG\times_G X;R) \Big) = \ker \Big(H^*_G(\pt;R) \longrightarrow H^*_G(X;R) \Big),
\]
where the second map is induced by the $G$-map $X \longrightarrow \pt$. By functoriality of equivariant cohomology, the following implication holds:
\begin{equation*}
    \exists~\text{a}~G\text{-map}~ X \longrightarrow Y ~ \Longrightarrow~ \FHindex{G}{R}{Y} \subseteq \FHindex{G}{R}{X},
\end{equation*}
also known as the {\em monotonicity} of the Fadell--Husseini index.

\subsection*{Grassmannians}
For integers $0 \le k \le d$, we will denote by $\grassmannR{k}{d}$ and $\grassmannC{k}{d}$ the real, resp. complex, Grassmannian of $k$-dimensional subspaces in a $d$-dimensional space. Moreover, we will denote by $\gamma_k^\R$ and $\gamma_k^\C$ the corresponding canonical real, resp. complex, vector bundles and by $(\gamma_k^\R)^{\perp}$ and $(\gamma_k^\C)^{\perp}$ their orthogonal complements. Cohomology of the real Grassmannian with $\F_2$-coefficients \cite{borel1953cohomologieMod2} is given by
\begin{equation*}
    H^*(\grassmannR{k}{d};\F_2) \cong \F_2[w_1, \dots, w_k]/I_k,
\end{equation*}
where $|w_i|=i$, for each $i=1, \dots, k$, and $I_k$ is an ideal on $k$ generators, which are the homogeneous parts of the power series 
\[
    (1+w_1+\dots+w_k)^{-1} = 1+ (w_1+\dots+w_k) + (w_1+\dots+w_k)^2 + \dots \in \F_2[[w_1, \dots, w_k]]
\]
of degrees $d-k+1, \dots, d$. Moreover, the generators $w_i$ are the Stiefel--Whitney classes of the canonical bundle $\gamma_k^\R$.
Analogously, integral cohomology of the complex Grassmannian \cite{borel1953cohomologieFibres} is given (with a slight abuse of notation) as
\begin{equation*}
    H^*(\grassmannC{k}{d};\Z) \cong \Z[c_1, \dots, c_k]/I_k,
\end{equation*}
where $|c_i|=2i$, for each $i=1, \dots, k$, and $I_k$ is an ideal on $k$ generators, which are the homogeneous parts of the power series
\[
    (1+c_1+\dots+c_k)^{-1} = 1 - (c_1+\dots+c_k) + (c_1+\dots+c_k)^2 - \dots \in \Z[[c_1, \dots, c_k]].
\]
of degrees $2(d-k+1), \dots, 2d$. Additionally, the generators $c_i$ are the Chern classes of the canonical bundle $\gamma_k^\C$.

Let $W_k(\C^d)$ denote the Stiefel manifold of orthonormal $k$-frames and let $U(k)$ be the unitary group, which we assume acts by matrix multiplication on $W_k(\C^d)$. Then, the canonical bundle $\gamma_k^\C$ can be seen as the pullback
\begin{equation}\label{eq:tautol-bundle-pullback}
    \begin{tikzcd}
        W_k(\C^d)\times_{U(k)} \C^k \arrow[r, hook] \arrow[d] \arrow[dr, phantom, "\lrcorner", very near start] & W_k(\C^{\infty})\times_{U(k)} \C^k \arrow[d] \\
        \grassmannC{k}{d}=W_k(\C^d)/U(k) \arrow[r, hook] & W_k(\C^{\infty})/U(k)=\grassmannC{k}{\infty}
    \end{tikzcd}
\end{equation}
along the map induced by the inclusion $W_k(\C^d) \hookrightarrow W_k(\C^{\infty})$. Since $W_k(\C^{\infty})$ is contractible with a free $U(k)$-action, it is a model for $EU(k)$ and $\grassmannC{k}{\infty}=BU(k)$. Recall that 
\[
    H^*(BU(k);\Z) \cong \Z[c_1, \dots, c_k],
\]
where $c_i$ is of degree $2i$ and is the $i$-th Chern class of the tautological bundle (i.e., the right vertical map in \eqref{eq:tautol-bundle-pullback}). Moreover, the bottom horizontal map in \eqref{eq:tautol-bundle-pullback} is the classification map of $\gamma_k^\C$ and induces the canonical projection
\[
    H^*(\grassmannC{k}{\infty}; \Z) \cong \Z[c_1, \dots, c_k] \relbar\joinrel\twoheadrightarrow \Z[c_1, \dots, c_k]/I_k \cong H^*(\grassmannC{k}{d}; \Z).
\]
Next, diagonal inclusion $U(1)^k \subseteq U(k)$ induces the pullback diagram of bundles
\begin{equation} \label{eq:map-rk}
    \begin{tikzcd}
        W_k(\C^{\infty})\times_{U(1)^k} \C^k \arrow[r] \arrow[d] \arrow[dr, phantom, "\lrcorner", very near start] & W_k(\C^{\infty})\times_{U(k)} \C^k \arrow[d] \\
        BU(1)^k = W_k(\C^{\infty})/U(1)^k \arrow[r, "r_k"] & W_k(\C^{\infty})/U(k) = BU(k)
    \end{tikzcd}
\end{equation}
along the quotient map denoted by $r_k$. The bundle on the right hand side of \eqref{eq:map-rk} is again the canonical bundle with the total Chern class $1+c_1+\dots + c_k \in H^*(BU(k); \Z) \cong \Z[c_1, \dots, c_k]$.

On the other hand, the bundle on the left hand side of \eqref{eq:map-rk} splits as a direct sum of $k$ line bundles $\xi_1 \oplus \dots \oplus \xi_k$, which are defined below in \eqref{eq:xi_i-bundle}. This is so because $\C^k$, as a $U(1)^k$-representation, splits into $k$ one-dimensional complex subrepresentations with the product action. Namely, the canonical complex line bundle
\begin{equation} \label{eq:canonical-line}
    \C \longrightarrow EU(1) \times_{U(1)} \C \longrightarrow BU(1)
\end{equation}
has the total Chern class $1+u \in H^*(BU(1);\Z) \cong \Z[u]$, where $|u|=2$. Consequently, if, for each $i=1, \dots, k$, we endow $\C$ with an action of the $i$-th factor of $U(1)^k$ by complex multiplication (while the other factors acts trivially), the bundle
\begin{equation} \label{eq:xi_i-bundle}
    \xi_i \colon~~\C \longrightarrow EU(1)^k \times_{U(1)^k} \C \longrightarrow BU(1)^k
\end{equation}
is the pullback of \eqref{eq:canonical-line} along the projection on the $i$-th factor $BU(1)^k \longrightarrow BU(1)$. Therefore, the total Chern class of $\xi$ equals
\[
    c(\xi_i) = 1+u_i \in H^*(BU(1)^k;\Z)\cong \Z[u_1, \dots, u_n],
\]
where $|u_1| = \dots =|u_k| = 2$. Finally, since the left-hand bundle in \eqref{eq:map-rk} is isomorphic to $\xi_1 \oplus \dots \oplus \xi_k$, by the Whitney sum formula, its total Chern class is $(1+u_1)\dots (1+u_k) \in \Z[u_1, \dots, u_k]$. Therefore, by naturality of Chern classes,
\[
    r_k^* \colon \Z[c_1, \dots, c_k] \longrightarrow \Z[u_1, \dots, u_k]
\]
maps $c_j$ to the degree $2j$ homogeneous part of $(1+u_1)\dots (1+u_k)$. In particular, we have $r_k^*(c_k) = u_1\dots u_k$.

\section{Complex central transversals}
\label{sec:central transversal}

In this section, we prove Theorem \ref{thm:complex-central-transversal}. Let us recall the statement. 

\thmCentral*

In the proof we use Theorem \ref{thm:equiv}, whose proof is deferred to Section \ref{sec:equivariant}. In fact, we obtain a bit more general Theorem \ref{thm:central-for-flags}, which is a complex analogue of \cite[Thm.~5.2]{Manta2024}.\\

Let us introduce a bit of notation.
For a measure $\mu$ in $\C^d$, we will say that a complex affine $k$-dimensional subspace $V \subseteq \C^d$ is its \textit{complex central $k$-transversal} if $\mu(H) \ge 1/(2d-2k+1)$ for every real-halfspace $H \supseteq V$ in $\C^d$. For $k=0$, we will say a {\it centerpoint} of $\mu$ instead of a complex central $0$-transversal. The set of centerpoints of a given measure is convex. Using this language, the complex central transversal theorem says that any $k+1$ measures in $\C^d$ have a common complex central $k$-transversal.\\

We now state and prove an extension of Theorem \ref{thm:complex-central-transversal}.

\begin{theorem} \label{thm:central-for-flags}
    Let $0 \le k < d$ be integers and $\mu_0, \dots, \mu_{d-1}$ probability measures in $\C^d$, which are absolutely continuous with respect to the Lebesgue measure. Then, there exists a flag of complex affine spaces
    \[
    V_k \subseteq V_{k+1} \subseteq \dots \subseteq V_{d-1} \subseteq \C^d
    \]
    of respective dimensions $k, k+1, \dots, d-1$, such that $V_k$ is a complex central $k$-transversal for $\mu_0, \dots, \mu_k$ and $V_i$ is a complex central $i$-transversal for $\mu_i$, for each $i = k+1, \dots, d-1$. 
\end{theorem}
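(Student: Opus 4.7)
The proof adapts the configuration space–test map strategy of Manta and Sober\'on \cite{Manta2024} to the complex setting, replacing their use of the Chan--Chen--Frick--Hull Borsuk--Ulam theorem on real Stiefel manifolds with our Theorem \ref{thm:equiv}.

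Setting $n = d - k$, I parametrize candidate flags by pairs $((v_1, \dots, v_n), (c_1, \dots, c_n)) \in W_n(\C^d) \times \C^n$, associating to each the chain
\[
V_{d-j} := \bigl\{\, x \in \C^d : \langle x, v_\ell \rangle_\C = c_\ell,~\ell = 1, \dots, j\,\bigr\}, \qquad j = 0, 1, \dots, n.
\]
Each $V_{d-j}$ is preserved by the diagonal action of $(S^1)^n$ given by $(\theta_\ell) \cdot ((v_\ell), (c_\ell)) = ((e^{i\theta_\ell}v_\ell), (e^{-i\theta_\ell}c_\ell))$. The centrality condition can be recast via the orthogonal projection $\pi_j \colon \C^d \to \spanC\{v_1, \dots, v_j\} \cong \C^j$: using the identification $V_{d-j}^{\perp_\C} = \spanC\{v_1, \dots, v_j\}$, the subspace $V_{d-j}$ is a complex central $(d-j)$-transversal for a measure $\mu$ on $\C^d$ if and only if $(c_1, \dots, c_j) \in \C^j \cong \R^{2j}$ is a centerpoint of $(\pi_j)_*\mu$ with depth at least $1/(2j+1)$. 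By Rado's centerpoint theorem in $\R^{2j}$ the corresponding set is a nonempty convex subset of $\C^j$; at the final level $j = n$ we additionally need this to hold simultaneously for $\mu_0, \dots, \mu_k$.

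The heart of the argument is the construction of a continuous $(S^1)^n$-equivariant test map
\[
F \colon W_n(\C^d) \longrightarrow \C^{d-1} \oplus \C^{d-2} \oplus \dots \oplus \C^{d-n}
\]
whose zero locus yields a valid flag. The translations $c_j$ are chosen implicitly via a continuous selection (for instance, a centroid) from the convex centerpoint regions, and the $j$-th component $F_j$, taking values in the $(d-j)$-dimensional complex direction of $V_{d-j}$, is defined as a weighted integral against $\mu_{d-j}$ (respectively an amalgamation against $\mu_0, \dots, \mu_k$ at level $n$). The weights are arranged so that $F_j$ is invariant under the $\ell$-th $S^1$-factor for $\ell \ne j$ and transforms by the prescribed character under the $j$-th factor, matching the product action on the target in Theorem \ref{thm:equiv}. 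Theorem \ref{thm:equiv} then forces $F$ to vanish on some frame, and the flag associated to that frame (together with its implicitly determined translations) has the required centrality properties.

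The main technical obstacle is the construction of $F$. At level $j = n$ the joint centerpoint condition for $k+1$ measures in $\R^{2n}$ is not directly given by Rado's theorem, and the canonical translations selected at different levels need not be compatible a priori; these issues are resolved by exploiting the convexity of the centerpoint regions together with the freedom afforded by varying the frame in $W_n(\C^d)$, so that the Borsuk--Ulam conclusion of Theorem \ref{thm:equiv} produces a frame where both the compatibility and the joint centerpoint obstructions vanish simultaneously.
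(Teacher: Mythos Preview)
Your proposal identifies the correct overall strategy — parametrize candidate flags by the complex Stiefel manifold $W_{d-k}(\C^d)$, build an $(S^1)^{d-k}$-equivariant test map into $\C^{d-1}\oplus\dots\oplus\C^k$, and invoke Theorem~\ref{thm:equiv} — but it is not a proof, because the test map $F$ is never actually constructed. You describe $F_j$ as ``a weighted integral against $\mu_{d-j}$'' and, at the bottom level, ``an amalgamation against $\mu_0,\dots,\mu_k$'', without ever saying what these integrals or amalgamations are or why their vanishing yields the centrality conditions. Your final paragraph then concedes that ``the main technical obstacle is the construction of $F$'' and asserts that the compatibility and joint-centerpoint obstructions ``are resolved'' by convexity and by varying the frame, without explaining how. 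This is precisely the content that a proof must supply.

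In the paper the test map is completely explicit and involves no integrals against the measures. For a frame $(v_1,\dots,v_{d-k})$ one sets $U_j=\spanC\{v_1,\dots,v_j\}$, takes $p_i$ to be the barycenter of the centerpoint region of $(\pi^{d-k})_*\mu_i$ in $U_{d-k}$ for $i\le k$ and of $(\pi^{d-i})_*\mu_i$ in $U_{d-i}$ for $i>k$, and defines the $j$-th block of the test map to be $\bigl(\langle v_j,\,p_0-p_i\rangle\bigr)_{i=1}^{d-j}\in\C^{d-j}$. Equivariance is immediate because the $p_i$ depend only on the spans $U_j$ (hence are invariant under all $S^1$-factors), while the pairing with $v_j$ picks up exactly the $j$-th character. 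A zero forces $p_0=\dots=p_k$ in $U_{d-k}$ and $\pi^{d-i}(p_0)=p_i$ for $i>k$, so that $V_i:=p_0+U_{d-i}^{\perp}$ is the desired flag. Note in particular that your claim that $F_j$ takes values ``in the $(d-j)$-dimensional complex direction of $V_{d-j}$'' is incompatible with the required equivariance: that subspace is \emph{invariant} under the $j$-th $S^1$-factor, not acted on by the standard character; it is the inner product with $v_j$ itself that produces the correct transformation law. Your proposal is missing exactly this concrete definition, the equivariance check, and the verification that the zero set solves the problem.
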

\begin{proof}
	Let $W_{d-k}(\C^d)$ denote the complex Stiefel manifold of complex orthonormal frames and let $(v_1, \dots, v_{d-k}) \in W_{d-k}(\C^d)$. For each $j = 1, \dots, d-k$ we define $U_j \coloneqq \spanC \{v_1, \dots, v_j\}$ and let
	\[
		\pi^j \colon \C^d \longrightarrow U_j,~z \longmapsto \langle z, v_1 \rangle v_1 + \dots +  \langle z, v_j \rangle v_j
	\]
	be the complex-orthogonal projection.
	
	For each $i=0, \dots, k$ we will denote by $p_i$ the barycenter of the set of centerpoints of the push-forward measure $\pi^{d-k}_*\mu_i$ in $U_{d-k}$. That is, for any real halfspace $T \subseteq U_{d-k}$ that contains $p_{i}$ we have 
	\[
		(\pi^{d-k}_*\mu_i)(T) \ge \frac{1}{\dim_{\R}(U_{d-k})+1} = \frac{1}{2d-2k+1}.
	\]
    Similarly to that, for each $i=k+1, \dots, d-1$, let $p_i \in U_{d-i}$ be the barycenter of all centerpoints of  $\pi^{d-i}_*\mu_i$ in $U_{d-k}$, so
    \[
		(\pi^{d-i}_*\mu_i)(T) \ge \frac{1}{\dim_{\R}(U_{d-i})+1} = \frac{1}{2d-2i+1},
	\]
    for every real halfspace $T$ in $U_{d-i}$ containing $p_i$.
    
	We define a test map
	\begin{align*}
        f \colon~ W_{d-k}(\C^d) &\longrightarrow \C^{d-1} \oplus \dots \oplus \C^{k},\\
        (v_1, \dots, v_{d-k})& \longmapsto \big(\langle v_1, p_{0}-p_{i}\rangle\big)_{i=1}^{d-1} \oplus \dots \oplus \big(\langle v_{d-k}, p_{0}-p_{i}\rangle\big)_{i=1}^{k}.
    \end{align*}
    By Theorem \ref{thm:equiv} we conclude that $f$ hits the origin, i.e.,
	there exists $(v_1, \dots, v_{d-k}) \in W_{d-k}(\C^d)$ for which $p_0 = \dots = p_k$ and, for each $i=k+1, \dots, d-1$, we have $p_i = \pi^{d-i}(p_0)$. 
	Finally, $V_i \coloneqq p_{0} + (U_{d-i})^{\perp} = p_{i} + (U_{d-i})^{\perp}$, for $i=k, \dots, d-1$, is the desired affine complex flag.
\end{proof}

\begin{remark} [Optimality of complex central transversal theorem]
\label{rem:optimality}   
	Once the complex dimension $k$ of an affine subspace is fixed, Theorem \ref{thm:complex-central-transversal} is optimal with respect to both the number of measures considered and the fraction of each measures contained in halfspaces. Let $e_1, \dots, e_d$ denote the standard complex basis of $\C^d$.
	
	Indeed, to see why a complex $k$-dimensional central transversal for $k+2$ measures need not exist, consider the following example: take uniform measures on balls of radius $\varepsilon > 0$, with centers at $k+2$ complex-affinely independent points $0, e_1, \dots, e_{k+1} \in \C^d$.
	Then, for $\varepsilon$ small enough, any $k$-flat would contain $k+2$ complex-affinely independent points, which leads to a contradiction. 
	
	On the other hand, to see that the fraction cannot be improved, consider $\mu_1, \dots ,\mu_k$ which are uniform measures on balls of radius $\varepsilon > 0$, having as centers $e_1, \dots, e_k$.
	As for the measure $\mu_{0}$, assume it is a uniform measure on $2d-2k+1$ disjoint balls of radius $\varepsilon^3$ centered at real-affinely independent points
    \[
        x_1 \coloneqq \varepsilon^2 e_{k+1},~  x_2 \coloneqq \varepsilon^2 ie_{k+1},~ \dots, x_{2(d-k)-1} \coloneqq \varepsilon^2 e_{d},~ x_{2(d-k)} \coloneqq \varepsilon^2 ie_{d},~ x_0 \coloneqq - x_1-\dots-x_{2(d-k)},
    \]
    where $i$ denotes the unit imaginary complex number.
    We are going to prove that there exists $\varepsilon >0$ small enough, such that for any complex $k$-transversal $V$ to $\mu_0, \dots, \mu_k$ there exists a real halfspace $H \supseteq V$ for which $\mu_0(H) \le 1/(2d-2k+1)$. To this end, let $V$ be a complex $k$-transversal to $\mu_0, \dots, \mu_k$. Then, it has to intersect the supports of $\mu_1, \dots, \mu_k$; otherwise, if $V \cap \supp \mu_i = \emptyset$, some real halfspace containing $V$ would have measure zero with respect to $\mu_i$. Similarly, $V$ also intersects the convex hull of $\supp \mu_0$. For each $j=1,\dots,k$ we denote by $z_j$ the orthogonal projection of $e_j$ to $V$ and by $z_0$ the orthogonal projection of the origin to $V$ (these projections are the closest points of $V$ to $e_j$'s and the origin, respectively). Then, since $V$ intersects the convex hulls of supports of $\mu_j$'s we have $\|e_j-z_j\| \le \varepsilon$, for all $j=1, \dots, k$, and $\|z_0\| \le \varepsilon$. Let $U \coloneqq \linspan_{\C}\{e_{k+1}, \dots, e_d\} \subseteq \C^d$. We have the following lemma.
    \begin{lemma}
        Let $0 \le k < d$ be integers and $\varepsilon >0$ a real parameter. We denote by $B_{\varepsilon}(x)$ the closed $\varepsilon$-ball around a point $x \in \C^d$. Then, we have the following.
        \begin{enumerate}[(i)]
            \item If $\varepsilon>0$ small enough, any tuple $y=(y_0,y_1, \dots, y_k)\in B_{\varepsilon}(0) \times B_{\varepsilon}(e_1) \times \dots \times B_{\varepsilon}(e_k)$ consists of $k+1$ complex affinely independent points in $\C^d$ and their complex affine span $V_y$ is transversal to $U$ with a non-empty intersection. In particular, we have $\{q_y\} = V_y \cap U$.
            \item If $\varepsilon>0$ small enough, for any $y\in B_{\varepsilon}(0) \times B_{\varepsilon}(e_1) \times \dots \times B_{\varepsilon}(e_k)$ the unique linear projection $\pi_y \colon~ \C^d \to U$ such that $\pi_y^{-1}(q_y) = V$ maps the unit ball $B_1(0) \subseteq \C^d$ inside the ball of radius 2 around the origin in $U$.
        \end{enumerate}
    \end{lemma}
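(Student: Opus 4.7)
The plan is to treat both items as perturbation statements around the base point $y^{*} := (0, e_1, \dots, e_k)$, exploiting that complex affine independence and transversality of complex subspaces are open conditions.

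For part (i), I would observe that at $y^{*}$ the points $0,e_1,\dots,e_k$ are complex affinely independent and that $\spanC\{e_1,\dots,e_k\}$ is a complement to $U=\spanC\{e_{k+1},\dots,e_d\}$. Both the complex affine independence of a tuple $(y_0,\dots,y_k)$ and the transversality $V_y + U = \C^d$ can be jointly encoded as the non-vanishing of the determinant of the $d\times d$ complex matrix $M_y$ whose columns are $y_1-y_0,\dots,y_k-y_0,e_{k+1},\dots,e_d$. This determinant is a polynomial, hence continuous, function of $y$ that equals $1$ at $y^{*}$; therefore it remains non-zero on some product of $\varepsilon$-balls around $y^{*}$. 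Transversality combined with the dimension count $\dim_\C V_y+\dim_\C U=k+(d-k)=d$ forces $V_y\cap U$ to be a single point, which we call $q_y$.

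For part (ii), I would first note that by (i) the linear direction $W_y$ of $V_y$ is complementary to $U$, so $\C^d = W_y\oplus U$, and the linear projection $\pi_y\colon \C^d\to U$ with kernel $W_y$ is the unique linear map satisfying $\pi_y^{-1}(q_y)=V_y$ (using that $q_y\in U$ implies $\pi_y(q_y)=q_y$). The matrix of $\pi_y$ is read off from the last $d-k$ rows of $M_y^{-1}$, and by Cramer's rule the inverse $M_y^{-1}$ depends continuously on $y$ on the open set where $\det M_y\neq 0$. Consequently the operator norm $\|\pi_y\|_{\mathrm{op}}$ depends continuously on $y$ as well. At the base point, $M_{y^{*}}$ is the identity matrix, so $\pi_{y^{*}}$ is the standard coordinate projection $(z_1,\dots,z_d)\mapsto (0,\dots,0,z_{k+1},\dots,z_d)$, whose operator norm equals $1$. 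By continuity, shrinking $\varepsilon$ if necessary, we may ensure $\|\pi_y\|_{\mathrm{op}}<2$ uniformly on the product of $\varepsilon$-balls, which gives $\pi_y(B_1(0))\subseteq B_2(0)\cap U$ as desired. I would take the final $\varepsilon$ to be the minimum of the two values produced above.

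There is no real obstacle here; the argument is a routine perturbation/continuity statement. The one point that deserves explicit verification is the continuity of $\pi_y$ (and hence of $\|\pi_y\|_{\mathrm{op}}$) as a function of $y$, which follows from continuity of matrix inversion on the open set of invertible matrices via Cramer's rule, once the transversality in (i) is in place.
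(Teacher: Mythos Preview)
Your proposal is correct and follows essentially the same approach as the paper: both parts are treated as openness/continuity statements around the base point $(0,e_1,\dots,e_k)$, with part~(i) reduced to the linear independence of $y_1-y_0,\dots,y_k-y_0,e_{k+1},\dots,e_d$ and part~(ii) to the continuity of $y\mapsto\pi_y$ in operator norm together with $\|\pi_{y^*}\|=1$. Your use of the single determinant $\det M_y$ and Cramer's rule makes the continuity in~(ii) slightly more explicit than the paper, which simply asserts that $y\mapsto\pi_y$ is continuous, but the substance is identical.
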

    \begin{proof}
        Points $y_0, \dots, y_k \in \C^d$ are affinely independent if and only if $(y_0, 1), \dots, (y_k,1) \in \C^{d+1}$ are linearly independent. The latter is equivalent to the fact that some $k$-minor of the matrix $((y_0, 1), \dots, (y_k,1))$ is non-zero. Thus, being affinely independent is an open condition (i.e., presereved under small perturbations). Similarly, due to complementary dimensions, $V_y$ being transversal to $U$ with a non-empty intersection is also an open condition, as it means that the vectors $y_1-y_0, \dots, y_k-y_0,e_{k+1},\dots, e_d$ are linearly independent. Therefore part (i) follows since $0, e_1, \dots, e_k$ are affinely independent and $V_{(0,e_1, \dots, e_k)} = \linspan_{\C}\{e_1, \dots, e_k\}$ is transversal to $U$ with the intersection at the origin.

        As for part (ii), let us denote by $\mathcal{L}(\C^d, U)$ the space of linear maps $L \colon~ \C^d \to U$ endowed with the operator norm $\|L \| = \inf\{c \ge 0 \colon~ \|L(v)\| \le c\|v\|~\text{for all } v\in\C^d\}$. Recall that any $L \in \mathcal{L}(\C^d, U)$ has a finite norm. Then, from the definition of the norm, we observe that $L(B_1(0))$ is contained in the $\|L\|$-ball around the origin in $U$. Therefore, we need to prove there exists $\varepsilon$ small enough, such that for any $y \in B_{\varepsilon}(0) \times B_{\varepsilon}(e_1) \times \dots \times B_{\varepsilon}(e_k)$ we have $\|\pi_y\| \le 2$.\\
        By part (i), there is $\varepsilon >0$ small enough such that we have a well-defined and continuous map
        \begin{equation*}
            \Pi \colon~B_{\varepsilon}(0) \times B_{\varepsilon}(e_1) \times \dots \times B_{\varepsilon}(e_k) \longrightarrow \mathcal{L}(\C^d,U),~y \longrightarrow \pi_{y}.
        \end{equation*}
        Since $\Pi(0, e_1, \dots, e_k)$ is the orthogonal projection, we have $\|\Pi(0, e_1, \dots, e_k)\|=1$. Let $B$ denote the open ball of radius 1 in $\mathcal{L}(\C^d,U)$ around $\Pi(0, e_1, \dots, e_k)$. From the triangle inequality, we have $\|L\| < 2$, for any $L \in B$. Moreover, $\Pi^{-1}(B) \ni (0, e_1, \dots, e_k)$ is an open set, hence there exists $\varepsilon' < \varepsilon$ such that
        \[
            B_{\varepsilon'}(0) \times B_{\varepsilon'}(e_1) \times \dots \times B_{\varepsilon'}(e_k) \subseteq \Pi^{-1}(B).
        \]
        In other words, for any $y \in B_{\varepsilon'}(0) \times B_{\varepsilon'}(e_1) \times \dots \times B_{\varepsilon'}(e_k)$, we have $\pi_y = \Pi(y) \in B$ and, consequently, $\|\pi_y\| < 2$, as observed observed above. This completes the proof of the lemma.
    \end{proof}
    In particular, by part (i) of the lemma, we have that, for $z=(z_0,z_1,\dots,z_k)$ and for $\varepsilon>0$ small enough, the corresponding $k$-plane $V_z=V$ is transversal to $U$ with $\{q_z\} = V \cap U$. Moreover, by part (ii), for $\varepsilon>0$ small enough, $\pi_{z} \colon~ \C^d \to U$ maps the unit ball in $\C^d$ to the ball of radius 2 in $U$. In particular, since $x_j \in U$, $\pi_{z}$ maps the $\varepsilon^3$-balls in $\C^d$ around $x_j$ (i.e., the balls that constitute the support of $\mu_0$) to $2\varepsilon^3$-balls in $U$ around $x_j$. Therefore, the support of the pushforward measure $(\pi_z)_*(\mu_0)$ in $U$ is contained in the union of $2\varepsilon^3$-balls centered around points $x_0, \dots, x_{2(d-k)} \in U$.
    \begin{figure}[h]
        \centering
        \subfloat[\centering]{{\includegraphics[width=5cm]{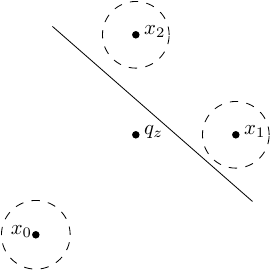} }}%
        \qquad
        \subfloat[\centering]{{\includegraphics[width=5cm]{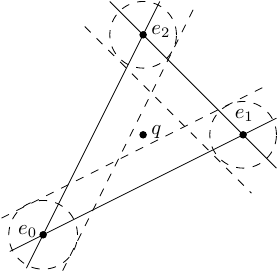} }}%
        \caption{}%
        \label{fig:optimality}%
    \end{figure}
    Now, it is enough to show that there exists a real affine half-space $T$ in $U$ and some $j=0, \dots, 2(d-k)$ for which $x_{j}$ and $q_z$ are on one side, while the $2\varepsilon^3$-balls around the points $x_l$, for all $l \neq j$, are on the other side. See Figure \ref{fig:optimality}(A) for illustration. Indeed, then we would have $(\pi_z)_*(\mu_0)(T) \le 1/(2d-2k+1)$, so the real half-space $\pi_{z}^{-1}(T) \subseteq \C^d$ would satisfy $\mu_0(\pi_{z}^{-1}(T)) \le 1/(2d-2k+1)$, as required. Finally, to prove the said separation claim in $U$, we first notice it is scale invariant, so we may scale it (via homothety centered at the origin of $U$) by a factor of $1/\varepsilon^2$. At this point, we can apply directly the lemma below, thus completing the remark about the optimality of the fraction in Theorem \ref{thm:complex-central-transversal}.

    \begin{lemma}
        Let $m \ge 2$ be an integer, $e_1, \dots, e_m \in \R^m$ the standard real basis, $e_0 \coloneqq - e_1- \dots -e_m \in \R^m$, and let $R=R(m)$ be the radius of the inscribed sphere of the $m$-simplex $\conv\{e_0, \dots, e_m\}$. Then, for any $0 < r < R $ and any $q \in \R^m$, there exists a real half-space $H \subseteq \R^m$ and an integer $j=0,\dots,m$, such that $e_j,q \in H$ , while the open $r$-balls around points $e_l$, for each $l \neq j$, are disjoint from $H$.
    \end{lemma}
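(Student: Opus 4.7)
The plan is to reformulate the existence of the desired half-space as a separation statement between a segment and a facet, and then to select the correct vertex $e_j$ using the \emph{incenter} of the simplex $\Delta = \conv\{e_0, \dots, e_m\}$ together with a Minkowski-type identity for its facet normals.

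First I would observe that, by the separating hyperplane theorem applied to the convex sets $[q, e_j]$ and $F_j + \overline{B}(0, r)$, where $F_j = \conv\{e_l : l \neq j\}$ is the facet opposite to $e_j$, the existence of a closed half-space $H$ with $q, e_j \in H$ that avoids every open $r$-ball around $e_l$ ($l \neq j$) is equivalent to the geometric condition $\dist([q, e_j], F_j) \ge r$. So it is enough to show that for any $q \in \R^m$ there is some $j$ with $\dist([q, e_j], F_j) \ge R$; the hypothesis $r < R$ then gives the conclusion.

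Next I would introduce the incenter $c^*$ of $\Delta$, for each $j$ the inward unit normal $v_j$ to the affine hyperplane $\pi_j \supseteq F_j$, and the affine signed-distance function $d_j(x) \coloneqq \langle v_j, x - e_l\rangle$ (any $l \neq j$), which is positive on the $e_j$-side of $\pi_j$. By the defining property of the incircle, $d_j(c^*) = R$ for every $j$. The core identity is Minkowski's relation $\sum_j A_j v_j = 0$, where $A_j \coloneqq \vol_{m-1}(F_j)$, obtained by applying the divergence theorem to a constant vector field on $\Delta$. Because $d_j$ is affine in $x$, this relation forces
\[
    \sum_j A_j \, d_j(q) \;=\; \sum_j A_j \, d_j(c^*) \;=\; R \sum_j A_j
\]
for every $q \in \R^m$, so the weighted average of the $d_j(q)$'s equals $R$, and hence some index $j$ satisfies $d_j(q) \ge R$.

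For this $j$ I would take $H \coloneqq \{x : d_j(x) \ge R\}$. Then $q \in H$ by the choice of $j$, and the height $h_j = d_j(e_j)$ equals $R \sum_k A_k / A_j \ge R$ (comparing the two volume formulas $V = h_j A_j / m$ and $V = R \sum_k A_k / m$), so $e_j \in H$. Finally, $d_j(e_l) = 0$ for $l \neq j$, and for any $x$ in the open $r$-ball around $e_l$ the Lipschitz estimate $|d_j(x) - d_j(e_l)| \le \|x - e_l\| < r < R$ gives $x \notin H$. The main obstacle is conceptual rather than computational: one must recognise that the correct reference point is the \emph{incenter} (not the centroid), since only then do the signed distances $d_j(\cdot)$ all equalise to $R$, which is exactly what lets the Minkowski identity $\sum_j A_j v_j = 0$ be leveraged as an averaging argument. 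Once these two ingredients are in hand, the rest is a short verification.
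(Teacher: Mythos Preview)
Your proof is correct and lands on the same half-spaces as the paper---translates of the facet hyperplanes of $\Delta$. The paper's argument is shorter: it simply asserts that the half-spaces $H_{j,r} \coloneqq \{x : d_j(x) \ge r\}$ still cover $\R^m$ whenever $r < R$, picks any $j$ with $q \in H_{j,r}$, and takes $H = H_{j,r}$. Your Minkowski identity $\sum_j A_j\, d_j(q) = R \sum_j A_j$ is precisely what justifies that covering claim (the weighted average of the $d_j(q)$ is $R$, so the maximum is $\ge R > r$), so in effect you have filled in the step the paper leaves implicit, at the cost of invoking the divergence theorem and the two volume formulas. The opening reformulation via the separating hyperplane theorem and $\dist([q,e_j], F_j)$ is an unnecessary detour: you never actually use it, since in the end you construct $H = \{x : d_j(x) \ge R\}$ directly and verify its properties by hand.
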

    \begin{proof}
        For each $j=0, \dots, m$ let $H_j$ be the real half-space containing $e_j$ in the interior whose boundary $\partial H_j$ is the affine hyperplane affinely spanned by $e_0, \dots, e_{j-1}, e_{j+1}, \dots, e_m$. From affine independence of points it follows that $H_j$'s are well defined and $H_0 \cup \dots \cup H_m = \R^m$. For each $j=0,\dots, m$, we denote by $H_{j,r}$ the translated half-space $H_j$ by the vector of norm $r$ orthogonal to $\partial H_j$ in the positive direction. Then, since $r<R$, we also have $H_{0,r} \cup \dots \cup H_{m,r} = \R^m$, so there exists some $j$ such that $q \in H_{j,r}$. See Figure \ref{fig:optimality}(B) for illustration. Finally, $H_{j,r}$ is the required half-space: $q, e_j \in H_{j,r}$ and the open $r$-balls around $e_l$, for $l\neq j$, are disjoint from $H_{j,r}$.
    \end{proof}
\end{remark}

We also give a proof sketch of the odd codimension version of complex central transversal theorem, Theorem \ref{thm:complex-central-transversal-odd}, using the configuration space -- test map scheme. We first recall the statement.

\thmCentralodd*
\begin{proof}
    The proof idea is very similar to that of Theorem \ref{thm:complex-central-transversal}. For $(v_0, \dots, v_{d-k}) \in W_{d-k+1}(\C^d)$ let $U \coloneqq \linspan_\R\{v_0\} \oplus \linspan_\C\{v_1, \dots, v_{d-k}\} $ and denote by
    \[
		\pi \colon \C^d \longrightarrow U,~z \longmapsto \langle z, v_0 \rangle_\R v_0 + \langle z, v_1 \rangle_\C v_1 + \dots +  \langle z, v_{d-k} \rangle_\C v_{d-k} 
	\]
	the projection. For each $0 \le i \le k$ let $p_{i}$ be the centerpoint of $\pi_*\mu_i$ in $U$, that is,
    \[
        \pi_*\mu_i(H) \ge \frac{1}{2d-2k+2}
    \]
    for every (real) halfspace $H \ni p_{i}$ in $U$.
    The center point is not necessarily unique, but they all form a convex set for a given measure, so we may take $p_{i}$ to be the barycenter.

    We define a test map
	\begin{align*}
		f \colon W_{d-k+1}(\C^d) &\longrightarrow \R^k \oplus (\C^{k})^{ d-k},\\ 
        (v_0, \dots, v_{d-k}) &\longmapsto \big(\langle v_0, p_{j} - p_{0} \rangle_\R\big)_{j=1}^k \oplus \Big(\big(\langle v_i, p_{j} - p_{0} \rangle_\C\big)_{j=1}^k \Big)_{i=1}^{d-k}
	\end{align*}
    which is equivariant with respect to $\Z_2^{d-k+1}$-diagonal action. By Remark \ref{rem:Z_2^n}, it must hit the origin and so $p_{0} = \dots = p_{k}$. Similarly as above, $V \coloneqq p_{i} + U^{\perp_\R}$ is the desired flat.
\end{proof}

The same methods used in Remark \ref{rem:optimality} do not imply the optimality of the depth obtained in Theorem \ref{thm:complex-central-transversal-odd}.  It would be interesting to know if this is indeed optimal.

\section{Complex fairy bread sandwich theorem}
\label{sec:fairy bread}

In this section, we establish a complex analogue of the fairy bread sandwich theorem of Axelrod-Freed and Sober\'on \cite{AxelrodFreed2022} via a configuration space -- test map scheme. The required equivariant result, Theorem \ref{thm:equiv}, is presented in the next section. 

Let $0 < n \le d$ be integers and $A_n(\C^d$) be the set of $n$-dimensional complex affine subspaces of $\C^d$. For $V \in A_n(\C^d)$, let $\M_n(V)$ be the space of finite measures on $V$ which are absolutely continuous with respect to the Lebesgue measure in $V$, equipped with the weak topology. Let 
\[
    E = \{(V, \mu[V]) \colon V \in A_n(\C^d), \mu[V] \in \M_n(V)\}
\]
be the total space of a fiber bundle with the projection map $E \longrightarrow A_n(\C^d),~(V, \mu[V]) \longmapsto V$.
An $n$-dimensional \emph{mass assignment} $\mu$ is a section of this fiber bundle. Given an $n$-dimensional mass assignment $\mu$ and an $n$-dimensional affine subspace $V$ of $\C^d$, we denote by $\mu[V]$ the measure that $\mu$ induces on $V$. We will moreover call $\mu$ nice if each $\mu[V]$ is absolutely continuous with respect to the Lebesgue measure on $V$.\\

We recall the statement of the theorem.

\thmFairy*
\begin{proof}
    Let $(v_1, \dots, v_{d-k+1}) \in W_{d-k+1}(\C^d)$. For each $i= k-1, \dots, d$, let 
    \[
        V_i \coloneqq \linspan_{\C}\{v_1, \dots, v_{d-i}\}^{\perp_{\C}}
    \]
    be a linear subspace of $\C^d$ of dimension $i$. We will define $S_i$ as an affine subspace of $V_{i+1}$ parallel to $V_i$ by induction.

    Set $S_{d} \coloneqq \C^d$. Suppose that $k-1 \le i \le d-1$ and that $S_{i+1}$ has already been defined. We can write $S_{i+1} = s+V_{i+1}$, for a unique vector $s \perp V_{i+1}$. If we assume that $s$ is the origin of $S_{i+1}$, we may speak of orthogonality in $S_{i+1}$. Define a complex affine line 
    \[
        L \coloneqq s+\linspan_{\C}\{v_{d-i}\} \subseteq S_{i+1}
    \]
    and let $\pi \colon S_{i+1} \longrightarrow L$ denote the orthogonal projection.
    For each of the pushout measures $\pi_*\mu_0^i[S_{i+1}], \dots, \pi_*\mu_{\pi_i}^i[S_{i+1}]$ in $L$, let $p^i_{0}, \dots, p^i_{\pi_i} \in L$ denote the corresponding (real) center point. In other words, for each $j=0, \dots, \pi_i$ and any (real) halfspace $N$ in $L$ containing $p^i_j$, we have 
    \[
        (\pi_*\mu_j^i[S_{i+1}])(N) \ge 1/3.
    \]
    (If for some measure there are multiple center points, we can choose the barycenter, since they form a convex set.) 
    
    We set $S_i$ to be the orthogonal complement of $L$ inside $S_{i+1}$ going through $p^i_{0}$. Therefore, $S_{i}$ is a central transversal to the measure $\mu_{0}^i[S_{i+1}]$: for any (real) halfspace $H$ in $S_{i+1}$ containing $S_i$, we have
    \[
        \mu^i_{\pi_i+1}[S_{i+1}](H) = (\pi_*\mu^i_{\pi_i+1}[S_{i+1}])(H \cap L) \ge \frac{1}{3}.
    \]
    If $p^i_{0}= \dots =p^i_{\pi_i}$, then $S_i$ is also a central transversal for $\mu_1^i[S_{i+1}], \dots, \mu_{\pi_i}^i[S_{i+1}]$ in $S_{i+1}$. However, since the points are in $L$, this condition is equivalent to $\langle v_{d-i}, p_0^i \rangle = \dots = \langle v_{d-i}, p_{\pi_i}^i \rangle$.

    Now, having done this for every $i = k-1, \dots, d-1$, we may define a test map
    \begin{align*}
        f \colon~ W_{d-k+1}(\C^d) &\longrightarrow \bigoplus_{i=k-1}^{d-1} \C^{\oplus \pi_i},\\
        (v_1, \dots, v_{d-k+1}) &\longmapsto \Big(\langle v_i, p_1^i - p_{0}^i\rangle, \dots , \langle v_i, p_{\pi_i}^i - p_{0}^i \rangle \Big)_{i=k-1}^{d-1}.
    \end{align*}
    We endow the domain and the codomain by the product $(S^1)^{d-k+1}$-action. Thus, $f$ is an $(S^1)^{d-k+1}$-equivariant map.

    By Theorem \ref{thm:equiv}, we know that $f$ hits the origin. This means that for a zero $(v_1, \dots, v_{d-k+1}) \in W_{d-k+1}(\C^d)$ of $f$ we have $p_0^i = \dots = p_{\pi_i}^i$ for each $i=k-1,\dots, d-1$. As explained above, the corresponding affine subspaces $S_{d-1} \supseteq \dots \supseteq S_{k-1}$ are the desired ones.
\end{proof}

\section{Equivariant maps and complex Stiefel manifolds}
\label{sec:equivariant}

In this section we prove that every $(S^1)^n$-equivariant map
\[
    W_n(\C^d) \longrightarrow \C^{d-1} \oplus \dots \oplus \C^{d-n}
\]
hits the origin, assuming the product group action. We use the Fadell--Husseini index \cite{Fadell:1988tm} with respect to the full group $(S^1)^n$.
We also include the treatment of the Fadell--Husseini index computations with respect to the subgroup $\Z_2^n \subseteq (S^1)^n$ in Remark \ref{rem:Z_2^n}, thus providing a slightly stronger equivariant statement.\\

Let us first introduce some notation. For integers $n \ge 1$ and for each $i = 1, \dots, n$, we will denote by $\C_i$ the $(S^1)^n$-representation
\[
    (S^1)^n \times \C \longrightarrow \C,~(\lambda_1, \dots, \lambda_n, z) \longmapsto \lambda_i z.
\]
If $1\le i<j \le n$, we denote by $\complexModule{i,j}$ the $(S^1)^n$-representation $\complexModule{i} \otimes_\C \overline{\complexModule{j}}$, where the overline represents action by complex conjugation.
For an integer $d \ge 1$, we consider the complex Stiefel manifold $W_n(\C^d)$ with an $(S^1)^n$-action from 
\[
    W_n(\C^d) \subseteq S(\complexModule{1}^d) \times \dots \times S(\complexModule{n}^d),
\]
where each $S(\complexModule{i}^d)$ denotes the unit sphere in $\complexModule{i}^d$. The bundle $\xi_i$ from \eqref{eq:xi_i-bundle} can be written as
\begin{equation} \label{eq:xi_i bundle}
    \xi_i \colon~~\complexModule{i} \longrightarrow \complexModule{i} \times_{(S^1)^n} E(S^1)^n \longrightarrow B(S^1)^n,
\end{equation}
having the total Chern class $c(\xi_i) = 1+u_i \in \Z[u_1, \dots, u_n]$. We will denote by $\overline{\xi}_i$ the complex conjugate of $\xi_i$. Then, $\xi_i \otimes \overline{\xi}_i$ is the trivial complex line bundle, so its total Chern class equals $c(\overline{\xi}_i) = 1-u_i \in \Z[u_1, \dots, u_n]$ \cite[Prop.~3.10]{hatcher2003vector}.\\

We may now recall the statement of Theorem \ref{thm:equiv} before proving it.

\thmEquiv*
\begin{proof}
    We will use an idea similar to the one in \cite{ChanChenFrickHull20}. Namely,
    for any such equivariant map $f$, we have a $(S^1)^n$-diagram
    \begin{equation*}
        \begin{tikzcd}
            W_n(\C^d) \arrow[rr, "f"] \arrow[d, hook] & & \bigoplus_{i=1}^n \complexModule{i}^{d-i} \arrow[d, hook]\\
            \prod_{i=1}^nS(\complexModule{i}^d) \arrow[rr, "g"] \arrow[rru, dashed] && \bigoplus_{i=1}^n \complexModule{i}^{d-i} \oplus \bigoplus_{1 \le i < j \le n}\complexModule{i,j},
        \end{tikzcd}
    \end{equation*}
    where the dashed map is any equivariant extension of $f$ (which is exists by equivariant Tietze's extension theorem of Gleason \cite{gleason1950spaces}) and $g$ is a direct sum of this extension and the $(S^1)^n$-map
    \[
        \prod_{i=1}^nS(\complexModule{i}^d) \longrightarrow \bigoplus_{1 \le i < j \le n}\complexModule{i,j},~ (z_i)_{i=1}^n \longmapsto (\langle z_i, z_j \rangle)_{1 \le i < j \le n}.
    \]
    Since $f$ and $g$ have identical zero sets, we focus on proving that $g$ has a zero.

    Assume the contrary, that $g$ does not have a zero. Let us denote by 
    \[
        R \coloneqq \bigoplus_{i=1}^n \complexModule{i}^{d-i} \oplus \bigoplus_{1 \le i < j \le n}\complexModule{i,j}
    \]
    the $(S^1)^n$-representation in the codomain.
    Then, by the monotonicity of the Fadell--Husseini index, we get
    \begin{equation} \label{eq: monotonicity of FH index, general}
        \FHindex{(S^1)^n}{\Z}{S(R)} \subseteq \FHindexBig{(S^1)^n}{\Z}{\prod_{i=1}^nS(\complexModule{i}^d)}.
    \end{equation}
    On the one hand, we have 
    \begin{equation} \label{eq: FH index of product of spheres, general}
        \FHindexBig{(S^1)^n}{\Z}{\prod_{i=1}^nS(\complexModule{i}^d)} = \left(u_1^{d}, \dots, u_n^{d}\right) \in \Z[u_1, \dots, u_n].
    \end{equation}
    This follows by the product formula for the index \cite{Fadell:1988tm}*{Prop.~3.1}, since $S^1$-index of each component $S(\C^d)$ is 
    \[
        \FHindex{S^1}{\Z}{S(\C^d)} = \ker\big(H^*(\CP^{\infty};\Z) \longrightarrow H^*(\CP^{d-1};\Z)\big) = \ker\big(\Z[u] \relbar\joinrel\twoheadrightarrow \Z[u]/(u^d)\big) = (u^{d}) \subseteq \Z[u].
    \]
    
    On the other hand, we have
    \begin{equation} \label{eq: FH index of representation, general}
        \FHindex{(S^1)^n}{\Z}{S(R)} = \Big(u_1^{d-1} \cdots u_n^{d-n} \cdot \prod_{1 \le i < j \le n} (u_i-u_j)\Big) \subseteq \Z[u_1, \dots, u_n].
    \end{equation}
    Indeed, from the Gysin sequence it follows that the index is generated by the top Chern class (i.e., Euler class) of the representation bundle
    \[
        \rho\colon R \longrightarrow R \times_{(S^1)^n} E(S^1)^n \longrightarrow B(S^1)^n.
    \]
    Using the bundle notation \eqref{eq:xi_i bundle}, we have
    \[
        \rho ~\cong~ \bigoplus_{i=1}^n \xi_i \oplus \bigoplus_{1 \le i < j \le n} \xi_i \otimes_{\C} \overline{\xi}_j,
    \]
    so the total Chern class equals
    \[
        c(\rho) = \prod_{i=1}^nc(\xi_i)^{d-i}\prod_{1 \le i<j < n} c(\xi_i \otimes_\C \overline{\xi}_j) = \prod_{i=1}^n(1+u_i)^{d-i}\prod_{1 \le i<j < n} (1 + u_i - u_j) \in \Z[u_1, \dots, u_n]
    \]
    and the claim \eqref{eq: FH index of representation, general} holds.

    We argue now by contradiction.
    The term 
    \[
        u_1^{d-1} u_2^{d-2} \cdots u_n^{d-n} \cdot \prod_{1 \le i < j \le n} u_j = (u_1 u_2 \cdots u_n)^{d-1}
    \]
    is a summand with a sign $\pm 1$ in the generator of \eqref{eq: FH index of representation, general}, which does not get canceled by other summands, but cannot appear as a summand in an element of \eqref{eq: FH index of product of spheres, general}, contradicting \eqref{eq: monotonicity of FH index, general}.
\end{proof}

\begin{remark}[$\Z_2^n$-strengthening of Theorem \ref{thm:equiv}] \label{rem:Z_2^n}
    One can strengthen Theorem \ref{thm:equiv} by showing that every $\Z_2^n$-equivariant map 
    \[
        W_n(\C^d) \longrightarrow \R_1^{a_1}\oplus \dots \oplus \R_n^{a_n}
    \]
    hits the origin if $a_{i}+2(i-1) \le 2d-1$ for each $i = 1, \dots, n$. Here by $\R_i$ we mean the analogous one-dimensional representation of $\Z_2^n$ to $\C_i$. This is a strengthening of Theorem \ref{thm:equiv} by setting $a_i \coloneqq 2(d-i)$ for each $i = 1, \dots, n$ if we consider $\Z_2^n$ as a subgroup of $(S^1)^n$. 
    
    In order to prove this, by using the same extension idea, one reduces the task to showing that every $\Z_2^n$-map
    \[
        \prod_{i=1}^nS(\R_i^{2d}) \longrightarrow \bigoplus_{i=1}^n \R_i^{a_i} \oplus \bigoplus_{1 \le i < j \le n}\complexModule{i,j}
    \]
    hits the origin.
    Indeed, when Fadell--Husseini indices are considered with respect to group $\Z_2^n$ and $\F_2$ coefficients, we obtain a contradiction by monotonicity property in this case as well, since
    \[
        \FHindexBig{\Z_2^n}{\F_2}{\prod_{i=1}^n S(\R_i^{2d})} = (t_1^{2d}, \dots, t_n^{2d}) \subseteq \F_2[t_1, \dots, t_n]
    \]
    and
    \[
        \FHindexBig{\Z_2^n}{\F_2}{S\big(\bigoplus_{i=1}^n \R_i^{a_i} \oplus \bigoplus_{1 \le i < j \le n}\complexModule{i,j}\big)} = \Big(t_1^{a_1} \cdots t_n^{a_n} \cdot \prod_{1 \le i < j \le n} (t_j^2+t_i^2)\Big) \subseteq \F_2[t_1, \dots, t_n].
    \]
    One can obtain these in the same way as in the proof of Theorem \ref{thm:equiv}, after noting an isomorphism 
    \[
        \complexModule{i,j}= \complexModule{i} \otimes_\C \complexModule{j} \cong \R_i^{\oplus 2} \otimes_{\C} \R_j^{\oplus 2} \cong (\R_{i}\otimes \R_{j})^{\oplus 2}
    \]
    of real $\Z_2^n$-representation, which contributes a factor of $(t_i+t_j)^2=t_i^2+t_j^2$ in the generator of the latter index.
    
    Again, the contradiction is obtained by noting that a nonzero term
    \[
        t_1^{a_1} t_2^{a_2} \cdots t_n^{a_n} \cdot \prod_{1 \le i < j \le n} t_j^2 = t_1^{a_1} t_2^{a_2+2} \cdots t_n^{a_n+2(n-1)}
    \]
    is in the generator of the second ideal, but never appears as a summand in an element of the first ideal, due to $a_i+2(i-1) \le 2d-1$, for each $i = 1, \dots, n$.
\end{remark}

\section{Complex Tverberg--Vre\'cica theorems}
\label{sec:tverberg-vrecica}

The goal of this section is to prove Theorem \ref{thm:strong-Tv-vrecica-complex} and Theorem \ref{thm:strong-tverberg-vrecica-odd-flats}, as well as the colorful version Theorem \ref{thm:colored-tverberg-vrecica}. We do this by exhibiting a slightly more general Theorem \ref{thm:tverberg vrecica general} and applying the results on non-vanishing powers of certain Euler classes from Section \ref{sec:euler-classes}. A similar approach in the case of real flats were used by Karasev \cites{Karasev:2007ib} for the classical Tverberg-Vre\'cica conjecture and by Blagojevi\'c, Matschke and Ziegler \cite{blagojevic2011optimal} for the colored version. The configuration space -- test map scheme is a generalization of the deleted join approach \cite{matouvsek2003borsuk} for the topological Tverberg theorem, and variants thereof.\\

We first introduce some notation. We refer the reader also to Section \ref{sec:notation} for further conventions.
Given integers $0 \le k \le d$ and a closed subset $B \subseteq \grassmannR{d-k}{d}$, we will denote by $\gamma_B$ the restriction of the canonical bundle $\gamma_{d-k}^\R$ over the Grassmannian to $B$. If, for a prime number $p$, the bundle $\gamma_B$ is oriented mod $p$, we will denote by $e(\gamma_B) \in H^*(B; \F_p)$ the mod $p$ Euler class. 

For a vector space $U$ and an integer $r \ge 1$, let 
\[
    W_r(U) \coloneqq\{(v_1, \dots, v_r) \in U^{\oplus r} \colon~ v_1 + \dots + v_r = 0\}.
\]
This vector space admits an action of the symmetric group $\Sym_r$ by permuting the coordinates. We will be interested in the case when $r = p^{\alpha}$ is a power of a prime $p$. Then, we consider $\Z_p^{\alpha} \subseteq \Sym_{r}$ as a subgroup via the regular embedding \cite[Ex.~III.2.7]{adem2013cohomology}, and hence acting on $W_r(U)$.
Let
\begin{align*}
    \pi_r \colon U^{\oplus r} \longrightarrow W_r(U),~~(v_i)_{i=1}^r \longmapsto (v_i)_{i=1}^r - ((v_1+\dots +v_r)/r)_{i=1}^r
\end{align*}
denote the $\Sym_r$-equivariant projection.
Generalizing this, for a vector bundle $\xi$ with a fiber $U$,
we will denote by $W_r(\xi) \subseteq \xi^{\oplus r}$ the subbundle with a fiber $W_r(U)$. The symmetric group $\Sym_r$ acts fiberwise on $W_r(\xi)$ by permuting the coordinates.

For a simplicial complex $K$, let 
\[
    \deletedJoinNoBrackets{K}{r} \coloneqq \{\sigma_1* \dots * \sigma_r \colon~ \sigma_i \textrm{ are pairwise disjoint}\}
\]
denote the $r$-fold deleted join of $K$. The points (in its geometric realization) will be denoted by $t_1 x_1 \uplus \dots \uplus t_r x_r \in \sigma_1*\dots*\sigma_r \subseteq \deletedJoinNoBrackets{K}{r}$, for $x_i \in \sigma_i$ and $t_1, \dots, t_r \ge 0$ with $t_1+ \dots + t_r = 1$. Deleted join admits a free $\Sym_r$-action by permuting the coordinates. \\

We have the following general result. See also \cites{blagojevic2011optimal, Karasev:2007ib}.

\begin{theorem} \label{thm:tverberg vrecica general}
    Let $m \ge 0$ and $0 \le k \le d$ be integers, and let $p$ a prime number. Let $B \subseteq \grassmannR{d-k}{d}$ be a closed subset and $\gamma_B$ its canonical bundle, as described above. For each $i=0, \dots, m$, let $r_i=p^{\alpha_i}$ be a power of $p$, let $K_i$ be a simplicial complex with 
    \[
    	\FHindex{\Z_p^{\alpha_i}}{\F_p}{\deletedJoin{K_i}{r_i}} \subseteq H^{> (r_i-1)(d-k+1)}(B\Z_p^{\alpha_i};\F_p),
    \]
    and let $f_i \colon K_i \longrightarrow \R^d$ be a continuous map.
    If $\gamma_B$ is oriented mod $p$ and $e(\gamma_B)^m \neq 0 \in H^*(B; \F_p)$, then there exist $U \in B$ and $x\in U$ and, for each $i=0, \dots, m$, there exist points $x_1^i, \dots, x^i_{r_i} \in K_i$ having pairwise disjoint supports such that the $k$-flat $x+U^{\perp} \subseteq \R^d$ satisfies
    \[
        \bigcup_{i=0}^m\{f_i(x_1^i), \dots, f_i(x^i_{r_i})\} \subseteq x+U^{\perp}.
    \]
\end{theorem}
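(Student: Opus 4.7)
The approach I would take is a configuration space--test map scheme, following the strategy of Karasev and Blagojevi\'c--Matschke--Ziegler, where a zero of a certain equivariant section is forced by an Euler class / Fadell--Husseini index argument. As configuration space I would take the fiber bundle $\pi_E \colon E \to B$ whose fiber over $U \in B$ is $\prod_{i=0}^m \deletedJoin{K_i}{r_i}$, on which the group $G := \prod_{i=0}^m \Z_p^{\alpha_i}$ acts fiberwise through the regular embeddings $\Z_p^{\alpha_i} \hookrightarrow \Sym_{r_i}$ permuting the $r_i$ branches of the $i$-th deleted join.

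Over $B$ I would introduce the $G$-equivariant test bundle
\[
    \mathcal{V} \;:=\; \bigoplus_{i=0}^m W_{r_i}(\gamma_B \oplus \underline{\R}) \;\oplus\; \gamma_B^{\oplus m},
\]
where $\Z_p^{\alpha_i}$ permutes the $r_i$ coordinates on the $i$-th $W_{r_i}$-summand and all other factors of $G$ act trivially (the base $B$ and the summand $\gamma_B^{\oplus m}$ also carry trivial $G$-action). Writing $y_i = t_1^i x_1^i \uplus \cdots \uplus t_{r_i}^i x_{r_i}^i$ and denoting by $\pi^U \colon \R^d \to U$ the orthogonal projection, the test section $s$ of $\pi_E^*\mathcal{V}$ at $(U, y_0, \dots, y_m)$ would be defined to have
\begin{itemize}
\item $W_{r_i}(\gamma_B \oplus \underline{\R})$-component equal to $\pi_{r_i}\bigl((\pi^U(f_i(x_j^i)),\, t_j^i)_{j=1}^{r_i}\bigr)$, and
\item $i$-th $\gamma_B$-component (for $i=1,\dots,m$) equal to $\sum_j t_j^i\,\pi^U(f_i(x_j^i)) - \sum_j t_j^0\,\pi^U(f_0(x_j^0))$.
\end{itemize}
A direct check shows $s$ is $G$-equivariant, and a zero at $(U, y_0, \dots, y_m)$ forces $t_j^i = 1/r_i$ and $\pi^U(f_i(x_j^i)) = x_i$ independent of $j$ (from the first block), and then $x_0 = \cdots = x_m =: x$ (from the second block). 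Combined with pairwise disjointness of supports imposed by the deleted join, this exhibits the desired $k$-flat $x + U^{\perp}$.

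To conclude, I would argue by contradiction: a nowhere-vanishing $s$ would force the $G$-equivariant Euler class $\pi_E^*\bigl(e_G(\mathcal{V})\bigr)$ to vanish in $H^*_G(E;\F_p)$. Since $\mathcal{V}$ is pulled back from $B$ with trivial $G$-action on the base, the Whitney sum formula yields
\[
    e_G(\mathcal{V}) \;=\; e(\gamma_B)^m \cdot \prod_{i=0}^m e_{\Z_p^{\alpha_i}}\bigl(W_{r_i}(\gamma_B \oplus \underline{\R})\bigr) \;\in\; H^*_G(B;\F_p) \;\cong\; H^*(BG;\F_p) \otimes H^*(B;\F_p).
\]
The top $BG$-degree component of each factor $e_{\Z_p^{\alpha_i}}\bigl(W_{r_i}(\gamma_B \oplus \underline{\R})\bigr)$ sits in degree $(r_i-1)(d-k+1)$, and (by the Euler class computations deferred to Section \ref{sec:euler-classes}) is non-zero; multiplying by $e(\gamma_B)^m \neq 0$ gives a non-zero class in total $BG$-degree exactly $\sum_i (r_i-1)(d-k+1)$.

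The main obstacle is then to promote non-vanishing in $H^*_G(B;\F_p)$ to non-vanishing of the pullback in $H^*_G(E;\F_p)$. I would handle this through the Leray--Serre spectral sequence of the Borel construction $EG \times_G E \to B$, whose $E_2$-page is $H^*(B;\F_p) \otimes H^*_G\bigl(\prod_i \deletedJoin{K_i}{r_i};\F_p\bigr)$. By the product formula for the Fadell--Husseini index and the hypothesis $\FHindex{\Z_p^{\alpha_i}}{\F_p}{\deletedJoin{K_i}{r_i}} \subseteq H^{> (r_i-1)(d-k+1)}(B\Z_p^{\alpha_i};\F_p)$, classes in $H^*(BG;\F_p)$ of total degree at most $\sum_i (r_i-1)(d-k+1)$ restrict non-trivially to the fiber; hence the top-$BG$-degree component of $e_G(\mathcal{V})$ tensored with $e(\gamma_B)^m$ survives to $E_\infty$ and maps non-trivially to $H^*_G(E;\F_p)$, contradicting $\pi_E^*(e_G(\mathcal{V})) = 0$.
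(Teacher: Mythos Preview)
Your approach is essentially the one the paper takes: the same configuration space $B \times \prod_i \deletedJoin{K_i}{r_i}$ with the product $\prod_i \Z_p^{\alpha_i}$-action, the same test bundle (the paper writes $W_{m+1}(\gamma_B)$ for your $\gamma_B^{\oplus m}$, which is the same thing), and the same Euler-class obstruction, packaged in the paper as Lemma~\ref{lem:euler class of general bundle}; your Leray--Serre/top-$BG$-degree argument is just that lemma unwound, using that the Borel fibration over $B$ is a product. One slip to fix: your $W_{r_i}$-component $\pi_{r_i}\bigl((\pi^U(f_i(x_j^i)),t_j^i)_j\bigr)$ is not well-defined on the deleted join, since $x_j^i$ is indeterminate when $t_j^i=0$; the standard fix (used in the paper) is to weight the first entry by $t_j^i$ as well, i.e.\ take $(t_j^i\,\pi^U(f_i(x_j^i)),t_j^i)$, which leaves the zero set unchanged.
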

\begin{proof}
    Let $U \in B$ and let $\pi_U \colon \R^{d} \longrightarrow U$ denote the orthogonal projection. The goal is to show that some $x \in U$ is in a Tverberg partition of each of the maps $g_i \coloneqq \pi_U \circ f_i \colon K_i \longrightarrow U$, for $i=0,\dots, m$. Then, $x+U^{\perp}$ would be the desired $k$-flat. 
    
    For each $i=0, \dots, m$, let
    \begin{align*}
        \deletedJoin{g_i}{r_i} \colon \deletedJoin{K_i}{r_i} \longrightarrow W_{r_i}(U \oplus \R),
         ~~ t_1x_1 \uplus \dots \uplus t_{r_i}x_{r_i}\longmapsto \pi_{r_i}\big(t_1g_i(x_1), t_1, \dots, t_{r_i}g_i(x_{r_i}), t_{r_i}\big),
    \end{align*}
    be the $\Z_p^{\alpha_i}$-equivariant deleted join map, where we consider $\Z_p^{\alpha_i} \subseteq \Sym_{r_i}$ as a subgroup acting by permuting the coordinates. Using these, we may construct a $\prod_{i=0}^m \Z_p^{\alpha_i}$-equivariant test map
    \begin{align} \label{eq:test map on fiber}
        \prod_{i=0}^m \deletedJoin{K_i}{r_i} &\longrightarrow W_{m+1}(U) \oplus \bigoplus_{i=0}^m W_{r_i}(U \oplus \R)\\
        \Big(\biguplus_{j=1}^{r_i}t_j^ix_j^i \Big)_{i=0}^m & \longmapsto \pi_{m+1} \Big(\sum_{j=1}^{r_i} t_j^ig_i(x_j^i)\Big)_{i=0}^m \oplus \bigoplus_{i=0}^m \deletedJoin{g_i}{r_i}\Big(\biguplus_{j=1}^{r_i}t_j^ix_j^i \Big) \notag
    \end{align}
    which hits the origin if and only if the point $x \in U$ from the first paragraph exists.
    There is a natural fiberwise $\prod_{i=0}^m \Z_p^{\alpha_i}$-action on the bundle $W_{m+1}(\gamma_B) \oplus \bigoplus_{i=0}^m W_{r_i}(\gamma_B \oplus \varepsilon)$ over $B$, where $\varepsilon \colon~ \R \longrightarrow \R \times B \longrightarrow B$ denotes the trivial real line bundle. Multiplying both the total and the base space with $\prod_{i=0}^m \deletedJoin{K_i}{r_i}$, we get a vector bundle
    \begin{equation} \label{eq:intermediate-bundle}
    	W_{m+1}(\gamma_B) \oplus \bigoplus_{i=0}^m W_{r_i}(\gamma_B \oplus \varepsilon) \times \deletedJoin{K_i}{r_i}
    \end{equation}
    over the base space $B \times \prod_{i=0}^m \deletedJoin{K_i}{r_i}$ with a fiber $W_{m+1}(U) \oplus \bigoplus_{i=0}^m W_{r_i}(U \oplus \R)$.
    Collecting the test maps \eqref{eq:test map on fiber} for each $U \in B$, we get an $\prod_{i=0}^m \Z_p^{\alpha_i}$-equivariant section of \eqref{eq:intermediate-bundle}. Our problem boils down to showing that it has a zero.
    
    After performing the Borel construction on both the total and the base space with respect to the diagonal $\prod_{i=0}^m \Z_p^{\alpha_i}$-action, the problem reduces to showing that there are no nowhere zero sections of the vector bundle
    \[
        W_{m+1}(\gamma_B) \oplus \bigoplus_{i=0}^m \big(W_{r_i}(\gamma_B \oplus \varepsilon) \times \deletedJoin{K_i}{r_i}\big) \times_{\Z_p^{\alpha_i}} E\Z_p^{\alpha_i}
    \]
    over the base $B \times \prod_{i=0}^m \deletedJoin{K_i}{r_i} \times_{\Z_p^{\alpha_i}} E\Z_p^{\alpha_i}$. However, due to our assumptions, we can apply Lemma \ref{lem:euler class of general bundle} to deduce that this bundle has a nontrivial mod $p$ Euler class and hence admits no nowhere zero sections.
\end{proof}

We now recall the statements of the topological complex Tverberg--Vre\'cica results before proving them.

\thmTVeven*
\begin{proof}
    The statement follows by applying Theorem \ref{thm:tverberg vrecica general} to $B = \grassmannC{d-k}{d} \subseteq \grassmannR{2d-2k}{2d}$, $m = k$ and $K_i = \Delta_{N_i}$. Let us show that the conditions of theorem are satisfied.
    Indeed, $\gamma_B$ is the canonical bundle $\gamma_{d-k}^\C$ over $\grassmannC{d-k}{d}$, which is canonically oriented, and Lemma \ref{lem:euler class of C-grassmannian} implies 
    \[
        e(\gamma_B)^k = e(\gamma^\C_{d-k})^k \neq 0 \in H^*(\grassmannC{d-k}{d};\F_p).
    \]
    Since $\deletedJoin{\Delta_{N_i}}{r_i} \cong [r_i]^{*N_i+1}$ is $(N_i-1)$-connected, it follows from the Leray-Serre spectral sequence of the Borel fibration of $\deletedJoin{\Delta_{N_i}}{r_i}$ that 
    \[
    	\FHindex{\Z_p^{\alpha_i}}{\F_p}{\deletedJoin{\Delta_{N_i}}{r_i}} \subseteq H^{>N_i}(B\Z_p^{\alpha_i};\F_p) = H^{> (r_i-1)(\rank_\R(\gamma_B)+1)}(B\Z_p^{\alpha_i};\F_p),
    \]
    which finishes the proof.
\end{proof}

To every coloring of the vertex set of a simplex $\Delta_N$ one can associate the so-called \emph{rainbow complex}. This is a subcomplex of $\Delta_N$ consisting of all \emph{rainbow faces} of the simplex, that is, the faces which have at most one vertex of each color. 

We may now prove the topological generalization of Theorem \ref{thm:colored-tverberg-vrecica}. 

\begin{theorem}[Topological colored Tverberg-Vre\'cica for complex flats]
	Let $0 \le k < d$ be integers and $p$ be a prime number. For each $i=0, \dots, k$, let $N_i \coloneqq (r_i-1)(2d-2k+1)$ and suppose that the vertex set of $\Delta_{N_i}$ is colored such that each of the color classes are of size at most $p-1$. Suppose that $f_i \colon \Delta_{N_i} \longrightarrow \C^d$ are continuous maps.
    Then, for each $i=0, \dots, k$ there are points $x_1^i, \dots, x^i_{p} \in \Delta_{N_i}$ whose supports are pairwise disjoint rainbow faces, and there is an affine complex $k$-dimensional subspace $V \subseteq \C^d$ such that
    \[
        \bigcup_{i=0}^k\{f_i(x_1^i), \dots, f_i(x^i_{p})\} \subseteq V.
    \]
\end{theorem}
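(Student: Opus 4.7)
The plan is to invoke Theorem \ref{thm:tverberg vrecica general} with the parameters $B = \grassmannC{d-k}{d} \subseteq \grassmannR{2d-2k}{2d}$, $m = k$, $r_i = p$ (so each $\alpha_i = 1$), and $K_i$ equal to the rainbow subcomplex $R_i$ of the colored simplex $\Delta_{N_i}$. Under these choices, the conclusion of Theorem \ref{thm:tverberg vrecica general} delivers, for each $i$, points $x^i_1, \dots, x^i_p \in R_i$ with pairwise disjoint supports, together with a complex $k$-flat $V \subseteq \C^d$ containing all $f_i(x^i_j)$. Since $R_i$ is the rainbow complex, the supports are automatically rainbow faces of $\Delta_{N_i}$.

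The Euler class hypothesis is identical to that of Theorem \ref{thm:strong-Tv-vrecica-complex}: the canonical bundle $\gamma_B = \gamma_{d-k}^{\C}$ is canonically oriented and hence oriented mod $p$, and Lemma \ref{lem:euler class of C-grassmannian} yields $e(\gamma_B)^k \neq 0 \in H^*(\grassmannC{d-k}{d};\F_p)$. With $\rank_\R(\gamma_B) = 2d-2k$, the index condition required by the general theorem becomes
\[
    \FHindex{\Z_p}{\F_p}{\deletedJoin{R_i}{p}} \subseteq H^{>N_i}(B\Z_p;\F_p),
\]
where $N_i = (p-1)(2d-2k+1)$.

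The main step, and the principal obstacle, is establishing this index bound. I would deduce it from the following connectivity statement, which is the key combinatorial-topological ingredient of the optimal colorful Tverberg theorem of Blagojevi\'c--Matschke--Ziegler \cite{blagojevic2011optimal}: if the vertex set of $\Delta_{N}$ is colored with color classes each of size at most $p-1$ and $N = (p-1)\ell$ for some $\ell$, then the $p$-fold deleted join $\deletedJoin{R}{p}$ of the associated rainbow complex $R$ is $(N-1)$-connected. Applying this with $\ell = 2d-2k+1$ shows that $\deletedJoin{R_i}{p}$ is $(N_i-1)$-connected. The Leray--Serre spectral sequence of the Borel fibration
\[
    \deletedJoin{R_i}{p} \longrightarrow \deletedJoin{R_i}{p} \times_{\Z_p} E\Z_p \longrightarrow B\Z_p
\]
then forces every class in the kernel of $H^*(B\Z_p;\F_p) \longrightarrow H^*_{\Z_p}(\deletedJoin{R_i}{p};\F_p)$ to live in degree $> N_i$, yielding the required containment.

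With both hypotheses of Theorem \ref{thm:tverberg vrecica general} verified, the output is precisely the assertion of the theorem. The hard part, as noted, is the connectivity estimate for the deleted join of the rainbow complex; this is typically proven via shellability of the corresponding chessboard-type complex or by an explicit discrete Morse matching, and is the sole substantive ingredient beyond the framework already established in this paper.
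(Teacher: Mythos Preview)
Your overall strategy—apply Theorem \ref{thm:tverberg vrecica general} with $B=\grassmannC{d-k}{d}$, $m=k$, $r_i=p$, and $K_i$ the rainbow subcomplex—is exactly the paper's approach, and your verification of the Euler class hypothesis via Lemma \ref{lem:euler class of C-grassmannian} is correct. The gap is in the index step.

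The connectivity statement you invoke is \emph{false}. The deleted join $\deletedJoin{R_i}{p}$ decomposes as a join of chessboard complexes $\Delta_{p,|C_j|}$ with $|C_j|\le p-1$, and these are in general \emph{not} $(|C_j|-2)$-connected. For instance, $\Delta_{5,4}$ is only $1$-connected (the Shareshian--Wachs sharpness of the Bj\"orner--Lov\'asz--Vre\'cica--\v{Z}ivaljevi\'c bound gives $\tilde H_2(\Delta_{5,4})\neq 0$), so it is not shellable and no discrete Morse matching will produce the connectivity you need. Taking, say, $p=5$ and a single simplex with five vertices split into color classes of sizes $4$ and $1$, one gets $N=4$ while $\deletedJoin{R}{5}=\Delta_{5,4}*[5]$ has $\tilde H_3\neq 0$ and hence is not $3$-connected. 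So the Leray--Serre argument you sketch cannot establish the required index bound.

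The paper avoids this by citing \cite[Cor.~2.6]{blagojevic2011optimal}, which computes the Fadell--Husseini index of $\deletedJoin{K_i}{p}$ \emph{directly}: for $|C_j|\le p-1$ and $p$ prime one has $\FHindex{\Z_p}{\F_p}{\Delta_{p,|C_j|}}=H^{\ge |C_j|}(B\Z_p;\F_p)$, and together with the join formula and $\sum_j|C_{i,j}|=N_i+1$ this yields $\FHindex{\Z_p}{\F_p}{\deletedJoin{K_i}{p}}=H^{\ge N_i+1}(B\Z_p;\F_p)$. The point of the Blagojevi\'c--Matschke--Ziegler argument is precisely that the index is as good as if the space were $(N_i-1)$-connected even though it is not; their proof goes through a degree/pseudomanifold argument, not connectivity. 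Replace your connectivity claim with this index computation and the proof goes through.
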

\begin{proof}
	The statement follows by applying Theorem \ref{thm:tverberg vrecica general} to $B = \grassmannC{d-k}{d} \subseteq \grassmannR{2d-2k}{2d}$, $m = k$ and $K_i \subseteq \Delta_{N_i}$ the rainbow complex. Let us check the conditions of the theorem. As in the previous proof, we have $e(\gamma_B)^k \neq 0 \in H^*(B;\F_p)$. If we denote by $V(\Delta_{N_i}) = \bigsqcup_j C_{i,j}$ the coloring of the vertex set of the $i$-th simplex, then $K_i$ is isomorphic to the join of chessboard complexes $\Delta_{r, |C_{i,j}|}$. By \cite[Cor.~2.6]{blagojevic2011optimal} and using the fact that $\sum_j |C_{i,j}| = N_i +1$, we have
    \[
    	\FHindex{\Z_p}{\F_p}{\deletedJoin{K_i}{p}} =  H^{\ge N_i+1}(B\Z_p;\F_p) = H^{\ge (r_i-1)(\rank_\R(\gamma_B)+1)+1}(B\Z_p;\F_p),
    \]
    which finishes the proof.
\end{proof}

We may now also prove the odd codimension result, where the flat is a Minkowski sum of a complex flat with a real affine line. Let us first recall the statement.

\thmTVodd*
\begin{proof}
    The statement follows by applying Theorem \ref{thm:tverberg vrecica general} to 
    \[
        B = \{(T, L) \in \grassmannC{d-k}{d} \times \RP^{2d-1}\colon~ T \perp L\} \subseteq \grassmannR{2d-2k+1}{2d},~(T,L) \longmapsto T \oplus L
    \]
    $m=k$ and $K_i = \Delta_{N_i}$.
    To see that $B$ is indeed a subset of $\grassmannR{2d-2k+1}{2d}$, we may note that $T$ is the maximal invariant subspace of $T \oplus L$ under multiplication by $i$ and $L$ the real-orthogonal complement of $T$ in $T \oplus L$.
    For $(T,L) \in B$,
    we recover $V$ from the statement as a translate of $(T\oplus L)^{\perp_\R} = (T \oplus L \oplus iL)^{\perp_\C} \oplus iL$, where $i$ denotes the unit imaginary complex number. 
    
    Let us check the conditions of the theorem we are using.
    The space $B$ can be identified with the real projectivisation $\PP_\R((\gamma^\C_{d-k})^{\perp})$ of the orthogonal complement of the canonical bundle over $\grassmannC{d-k}{d}$.
    If we denote by $\gamma_1^{\R}$ the canonical line bundle over $\RP^{2d-1}$, then
    \[
        \gamma_B \cong \pi_1^*(\gamma^\C_{d-k}) \oplus  \pi_2^* (\gamma_1^{\R}),
    \]
    where
    \[
        \pi_1 \colon \PP_\R((\gamma^\C_{d-k})^{\perp}) \longrightarrow \grassmannC{d-k}{d} \hspace{0.3cm} \textrm{and} \hspace{0.3cm}  \pi_2 \colon \PP_\R((\gamma^\C_{d-k})^{\perp}) \longrightarrow \RP^{2d-1}
    \]
    are induced maps from the two projections $(T,L) \longmapsto T$ and $(T, L) \longmapsto L$, respectively. Note that $\gamma_B$ is oriented mod 2 and by Corollary \ref{cor:euler class mod 2 of 2m + 1 grassmannian}, we have
    \[
        e(\gamma_B)^k = e\big(\pi_1^*(\gamma^\C_{d-k}) \oplus  \pi_2^*(\gamma_1^{\R})\big)^{k} \neq 0 \in H^*(\PP_\R((\gamma^\C_{d-k})^{\perp}); \F_2).
    \]
    Similarly as in the proof of the previous theorem, we have
    \[
    	\FHindex{\Z_p^{\alpha_i}}{\F_p}{\deletedJoin{\Delta_{N_i}}{r_i}} \subseteq H^{>N_i}(B\Z_p^{\alpha_i};\F_p) = H^{> (r_i-1)(\rank_\R(\gamma_B)+1)}(B\Z_p^{\alpha_i};\F_p),
    \]
    which finishes the proof.
\end{proof}

\section{Lemmas on Euler classes}\label{sec:euler-classes}

In this section we prove non-vanishing of the appropriate Euler classes used in Section \ref{sec:tverberg-vrecica}. The next two lemmas, although in a different shape, are implicit in \cite{Karasev:2007ib} and later appeared in a similar form in \cite{blagojevic2011optimal}.

Let us first introduce a bit of notation. The reader is referred  to Section \ref{sec:notation} and Section \ref{sec:tverberg-vrecica} for further conventions. Let $r=p^\alpha$ be a power of $p$. We will consider $\Z_p^\alpha \subseteq \Sym_r$ as a subgroup via the regular embedding \cite[Ex.~III.2.7]{adem2013cohomology}. For a simplicial complex $K$, a vector bundle $\xi$ over a paracompact base $B$ with a fiber $V$ and a trivial real line bundle $\varepsilon \colon \R \longrightarrow \R \times B \longrightarrow B$, let
\begin{equation} \label{eq:Wr x Delta bundle}
    (W_{r}(\xi \oplus \varepsilon) \times \deletedJoinNoBrackets{K}{r}) \times_{\Z_p^{\alpha}} E\Z_p^{\alpha}
\end{equation}
be a vector bundle over the base space $B \times (\deletedJoinNoBrackets{K}{r} \times_{\Z_p^{\alpha}} E\Z_p^{\alpha})$. Notice that it has a fiber $W_r(V \oplus \R)$ with the dimension $(r-1)(\rank_\R \xi+1)$.\\

We have the following auxiliary lemma.

\begin{lemma} \label{lem:euler class of Wr x Delta}
    Let $p$ be a prime number, $r = p^\alpha$, for some integer $\alpha \ge 0$, and $\xi$ a complex vector bundle over a paracompact base space $B$. Assume that $K$ is a simplicial complex with 
    \[
        \FHindex{\Z_p^{\alpha}}{\F_p}{\deletedJoinNoBrackets{K}{r}} \subseteq H^{> (r-1)(\rank_\R\xi + 1)}(B\Z_p^{\alpha};\F_p).
    \]
    Then, the mod $p$ Euler class of the vector bundle \eqref{eq:Wr x Delta bundle} has a nonzero term in
    \begin{equation*}
         H^*(\deletedJoinNoBrackets{K}{r} \times_{\Z_p^{\alpha}} E\Z_p^{\alpha};\F_p) ~\subseteq~ H^*(B;\F_p) \otimes H^*(\deletedJoinNoBrackets{K}{r} \times_{\Z_p^{\alpha}} E\Z_p^{\alpha};\F_p). 
    \end{equation*}
\end{lemma}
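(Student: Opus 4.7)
The plan is to compute the Euler class explicitly using the splitting of $W_r(V \oplus \R) = W_r(V) \oplus W_r(\R)$ as $\Z_p^\alpha$-representations, and then to identify a distinguished component living in $1 \otimes H^*(\deletedJoinNoBrackets{K}{r} \times_{\Z_p^\alpha} E\Z_p^\alpha; \F_p)$ of the Künneth decomposition of $H^*(B \times (\deletedJoinNoBrackets{K}{r} \times_{\Z_p^\alpha} E\Z_p^\alpha);\F_p)$.

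First, I would split \eqref{eq:Wr x Delta bundle} as $\Xi_1 \oplus \Xi_2$ corresponding to the two summands above. Since $W_r(\varepsilon) = B \times W_r(\R)$ is the trivial bundle with the $\Z_p^\alpha$-action only on the fiber, $\Xi_2$ is pulled back via the projection $B \times (\deletedJoinNoBrackets{K}{r} \times_{\Z_p^\alpha} E\Z_p^\alpha) \to \deletedJoinNoBrackets{K}{r} \times_{\Z_p^\alpha} E\Z_p^\alpha$, and further via the classifying map $\pi \colon \deletedJoinNoBrackets{K}{r} \times_{\Z_p^\alpha} E\Z_p^\alpha \to B\Z_p^\alpha$, from the bundle $W_r(\R) \times_{\Z_p^\alpha} E\Z_p^\alpha$. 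Hence $e(\Xi_2) = 1 \otimes \pi^*(E_0)$, where $E_0 := e_{\Z_p^\alpha}(W_r(\R)) \in H^{r-1}(B\Z_p^\alpha;\F_p)$. For $\Xi_1$, I would decompose the complex representation $W_r(V) = V \otimes_\C W_r(\C) = \bigoplus_{\chi \neq 1} V \otimes_\C \C_\chi$ over nontrivial characters of $\Z_p^\alpha$; then $\Xi_1$ is the pullback of $\bigoplus_{\chi \neq 1} \xi \boxtimes L_\chi$ from $B \times B\Z_p^\alpha$, where $L_\chi$ is the complex line bundle over $B\Z_p^\alpha$ corresponding to $\chi$. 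By the splitting principle with Chern roots $a_1,\dots,a_n$ of $\xi$ (where $n = \rank_\C \xi$) and $t_\chi = c_1(L_\chi)$, we obtain
\[
    e(\Xi_1) = \prod_{\chi \neq 1} \prod_{j=1}^n (a_j + \bar t_\chi), \qquad \bar t_\chi := \pi^*(t_\chi).
\]

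Expanding $e(\Xi_1)$ as a polynomial in the $a_j$'s (with coefficients in $H^*(\deletedJoinNoBrackets{K}{r} \times_{\Z_p^\alpha} E\Z_p^\alpha;\F_p)$), the constant term (i.e.\ the part in the $1 \otimes H^*(\cdot)$ summand) is exactly $1 \otimes \prod_{\chi \neq 1} \bar t_\chi^n$. Multiplying by $e(\Xi_2)$, the total Euler class of \eqref{eq:Wr x Delta bundle} has a distinguished $1 \otimes$-component equal to
\[
    1 \otimes \pi^*\Big(E_0 \cdot \prod_{\chi \neq 1} t_\chi^n\Big) \in 1 \otimes H^*(\deletedJoinNoBrackets{K}{r} \times_{\Z_p^\alpha} E\Z_p^\alpha;\F_p).
\]
Since this is a direct summand under Künneth, its nonvanishing will imply the lemma.

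Finally, I would prove the factor $E_0 \cdot \prod_{\chi \neq 1} t_\chi^n \in H^*(B\Z_p^\alpha; \F_p)$ is nonzero by a direct computation in the polynomial ring part of $H^*(B\Z_p^\alpha;\F_p)$: the $t_\chi$'s run over nonzero linear forms (in $x_i$'s for $p=2$, or in $y_i$'s for $p$ odd, using $t_{\bar\chi} = -t_\chi$), and the product reduces to a power of the product of all nonzero linear forms, which is nonzero in a polynomial ring. Its degree is $|E_0| + n \cdot |\text{top}| \cdot (r-1) = (r-1) + 2n(r-1) = (r-1)(\rank_\R \xi + 1)$, matching the rank of the bundle. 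The FH-index hypothesis
\[
    \FHindex{\Z_p^\alpha}{\F_p}{\deletedJoinNoBrackets{K}{r}} = \ker(\pi^*) \subseteq H^{>(r-1)(\rank_\R \xi +1)}(B\Z_p^\alpha;\F_p)
\]
then forces $\pi^*(E_0 \cdot \prod_{\chi} t_\chi^n) \neq 0$, which yields the conclusion. The main obstacle I anticipate is bookkeeping: carefully separating the $B$-direction from the $(\deletedJoinNoBrackets{K}{r} \times_{\Z_p^\alpha} E\Z_p^\alpha)$-direction inside the Künneth decomposition to isolate the $a_j$-free term, and handling the minor differences between $p=2$ and $p$ odd in the explicit description of $t_\chi$ and $E_0$ inside $H^*(B\Z_p^\alpha;\F_p)$.
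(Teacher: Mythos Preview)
Your argument is correct, and it takes a genuinely different route from the paper's. Both proofs isolate the $H^0(B)$--component of the Euler class (you via the K\"unneth summand $1\otimes H^*(\deletedJoinNoBrackets{K}{r}\times_{\Z_p^\alpha}E\Z_p^\alpha)$, the paper by restricting to a single fibre $\{b\}\times(\deletedJoinNoBrackets{K}{r}\times_{\Z_p^\alpha}E\Z_p^\alpha)$), and both then recognise this component as $\pi^*$ of the Euler class of the representation bundle $\eta\colon W_r(V\oplus\R)\to W_r(V\oplus\R)\times_{\Z_p^\alpha}E\Z_p^\alpha\to B\Z_p^\alpha$, so that the Fadell--Husseini hypothesis finishes the job. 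The difference is in how $e(\eta)\neq 0$ is established. You compute it explicitly: using the complex structure of $\xi$ you split $W_r(V)=\bigoplus_{\chi\neq 1}V\otimes_\C\C_\chi$, apply the Chern-root formula, and read off the constant-in-$a_j$ term as $E_0\cdot\prod_{\chi\neq 1}t_\chi^n$, a product of nonzero elements in the polynomial part of $H^*(B\Z_p^\alpha;\F_p)$. The paper instead argues abstractly: the Gysin sequence of the sphere bundle of $\eta$ identifies $\ker\pi_1^*$ with the ideal $(e(\eta))$, and a consequence of the localization theorem (the $\Z_p^\alpha$--action on $S(W_r(V\oplus\R))$ is fixed-point free) forces this kernel to be nonzero.

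Your approach is more elementary---it avoids the localization theorem and yields the class explicitly---but it uses the complex structure of $\xi$ in an essential way (through the decomposition of $W_r(\C)$ over characters). The paper's proof never touches that hypothesis and works for any real $\xi$; this matters because Lemma~\ref{lem:euler class of general bundle} is later invoked for the odd-codimension Theorem~\ref{thm:strong-tverberg-vrecica-odd-flats}, where the relevant $\gamma_B$ is only real (and $p=2$). Your method extends to that situation as well, but via the real analogue $W_r(\R)=\bigoplus_{\chi\neq 1}\R_\chi$ and Stiefel--Whitney roots rather than Chern roots.
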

\begin{proof}
    Let $b \in B$ be a point and $V$ a fiber of $\xi$ over $b$. The claim is equivalent to showing that the pullback of the bundle \eqref{eq:Wr x Delta bundle} via the inclusion
    \[
        \deletedJoinNoBrackets{K}{r} \times_{\Z_p^{\alpha}} E\Z_p^{\alpha} ~ \xrightarrow{~\{b\} \times \textrm{id}} B \times (\deletedJoinNoBrackets{K}{r} \times_{\Z_p^{\alpha}} E\Z_p^{\alpha})
    \]
    has nonzero mod $p$ Euler class. However, this pullback is the top bundle in another pullback diagram
    \begin{equation} \label{eq:pullback of Wr bundles}
        \begin{tikzcd}
             W_r(V \oplus \R) \arrow[r] \arrow[d] & (\deletedJoinNoBrackets{K}{r} \times W_r(V \oplus \R)) \times_{\Z_p^{\alpha}} E\Z_p^{\alpha} \arrow[r] \arrow[d] & \deletedJoinNoBrackets{K}{r} \times_{\Z_p^{\alpha}} E\Z_p^{\alpha} \arrow[d, "\pi"]\\
            W_r(V \oplus \R) \arrow[r] & W_r(V \oplus \R) \times_{\Z_p^{\alpha}} E\Z_p^{\alpha} \arrow[r] & B\Z_p^{\alpha}
        \end{tikzcd}
    \end{equation}
    where $\pi$ is the projection in the Borel fibration of $\deletedJoinNoBrackets{K}{r}$. Let us denote by $\eta$ the bottom representation bundle.
    
    From the Fadell-Husseini index assumption, $\pi$ induces a monomorphism in cohomology up to degree $(r-1)(\rank_\R\xi + 1) = \dim W_r(V \oplus \R)$. Therefore, it is enough to prove that the mod $p$ Euler class $e(\eta) \in H^*(B\Z_p^\alpha;\F_p)$ is nonzero.

    The Gysin sequence \cite[Thm.~12.2]{MilnorStasheff74} for the spherical bundle
    \begin{equation*}
        S(W_r(V \oplus \R)) \longrightarrow S(W_r(V \oplus \R)) \times_{\Z_p^{\alpha}} E\Z_p^{\alpha} \xrightarrow{~~\pi_1~~} B\Z_p^{\alpha}
    \end{equation*}
    induced from $\eta$ has an exact portion
    \begin{equation*}
        \dots \to H^{n-(r-1)(\rank_{\R}\xi +1)}(B\Z_p^\alpha;\F_p) \xrightarrow{e(\eta)\cup} H^n(B\Z_p^\alpha;\F_p) \xrightarrow{\pi_1^*} H^n(S(W_r(V \oplus \R)) \times_{\Z_p^{\alpha}} E\Z_p^{\alpha};\F_p) \to \dots,
    \end{equation*}
    where the left arrow is multiplication by $e(\eta)$. Therefore, by exactness, the kernel of
    \[
        \pi_1^* \colon~ H^*(B\Z_p^\alpha;\F_p) \longrightarrow H^*(S(W_r(V \oplus \R)) \times_{\Z_p^{\alpha}} E\Z_p^{\alpha}; \F_p)
    \]
    is the principal ideal generated by $e(\eta)$.
    However, since the fix-point set $S(W_r(V \oplus \R))^{\Z_p^\alpha}$ is empty, it follows by a consequence of the localization theorem \cite[Cor.~I.1]{hsiang1975cohomology} that the kernel of $\pi_1^*$ is non-zero. Therefore, $e(\eta) \neq 0 \in H^*(B\Z_p^\alpha;\F_p)$, which finishes the proof.
\end{proof}

Extending the notation from before, let $m \ge 0$ be an integer, and for each $i=0, \dots, m$, let $r_i \coloneqq p^{\alpha_i}$ some power of $p$ and $K_i$ a simplicial complex.
Consider a map
\[
    B \times \prod_{i=0}^m \deletedJoin{K_i}{r_i} \times_{\Z_p^{\alpha_i}} E\Z_p^{\alpha_i} \xrightarrow{~~\Delta \times \textrm{id}} B \times \prod_{i=0}^m B \times (\deletedJoin{K_i}{r_i} \times_{\Z_p^{\alpha_i}} E\Z_p^{\alpha_i})
\]
defined as the product of the diagonal map $\Delta \colon B \longrightarrow B \times \prod_{i=0}^m B$ and the identity on other factors. We will denote by
\begin{equation} \label{eq:big W bundle}
        W_{m+1}(\xi) \oplus \bigoplus_{i=0}^m (W_{r_i}(\xi \oplus \varepsilon) \times \deletedJoin{K_i}{r_i}) \times_{\Z_p^{\alpha_i}} E\Z_p^{\alpha_i}
\end{equation}
the pullback of the product bundle
\begin{equation} \label{eq:product-big-bundle}
    W_{m+1}(\xi) \times \prod_{i=0}^m (W_{r_i}(\xi \oplus \varepsilon) \times \deletedJoin{K_i}{r_i}) \times_{\Z_p^{\alpha_i}} E\Z_p^{\alpha_i}
\end{equation}
along $\Delta \times \textrm{id}$. In particular, \eqref{eq:big W bundle} has $B \times \prod_{i=0}^m \deletedJoin{K_i}{r_i} \times_{\Z_p^{\alpha_i}} E\Z_p^{\alpha_i}$ as the base space.\\

Next, we prove a general result on non-vanishing of the Euler class used in Section \ref{sec:tverberg-vrecica}.

\begin{lemma} \label{lem:euler class of general bundle}
    Let $m \ge 0$ be an integer, $p$ a prime number and $\xi$ a vector bundle over a paracompact base space $B$. For each $i=0, \dots, m$, let $r_i = p^{\alpha_i}$ and let $K_i$ be a simplicial complex with 
    \[
    	\FHindex{\Z_p^{\alpha_i}}{\F_p}{\deletedJoin{K_i}{r_i}} \subseteq H^{> (r-1)(\rank_\R\xi + 1)}(B\Z_p^{\alpha_i};\F_p).
    \]
    If $\xi$ is oriented mod $p$ and $e(\xi)^m \neq 0 \in H^*(B;\F_p)$, then, the mod $p$ Euler class of \eqref{eq:big W bundle} is nonzero in cohomology
    \[
        H^*(B;\F_p) \otimes \bigotimes_{i=0}^m H^*(\deletedJoin{K_i}{r_i} \times_{\Z_p^{\alpha_i}} E\Z_p^{\alpha_i};\F_p)
    \]
    of the base.
\end{lemma}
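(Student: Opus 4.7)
The plan is to exploit the fact that \eqref{eq:big W bundle} is by construction a Whitney sum of bundles pulled back from the various factors of the base. By the Whitney product formula, its mod $p$ Euler class factors accordingly, and we may analyze each summand separately. For the summand $W_{m+1}(\xi)$, I would note that the projection $(v_0, v_1, \dots, v_m) \longmapsto (v_1, \dots, v_m)$ realizes a bundle isomorphism $W_{m+1}(\xi) \cong \xi^{\oplus m}$, so that $e(W_{m+1}(\xi)) = e(\xi)^m$, which is nonzero by hypothesis. For each of the remaining summands I would invoke Lemma \ref{lem:euler class of Wr x Delta} applied to the bundle $\xi$ and complex $K_i$, whose hypothesis is exactly the given Fadell--Husseini index bound.

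From the lemma, the Euler class of the $i$-th factor, viewed through the Künneth isomorphism
\[
H^*\bigl(B \times (\deletedJoin{K_i}{r_i} \times_{\Z_p^{\alpha_i}} E\Z_p^{\alpha_i});\F_p\bigr) \cong H^*(B;\F_p) \otimes H^*(\deletedJoin{K_i}{r_i} \times_{\Z_p^{\alpha_i}} E\Z_p^{\alpha_i};\F_p),
\]
can be written in the form $1 \otimes \alpha_i + \gamma_i$, where $\alpha_i \neq 0$ and every summand of $\gamma_i$ has strictly positive degree in the $B$-factor. Pulling all pieces back to the base of \eqref{eq:big W bundle} and multiplying, the total Euler class becomes
\[
e(\text{\eqref{eq:big W bundle}}) \;=\; e(\xi)^m \cdot \prod_{i=0}^m \bigl(1 \otimes \alpha_i + \gamma_i\bigr) \;=\; e(\xi)^m \otimes \alpha_0 \otimes \cdots \otimes \alpha_m \;+\; R,
\]
where $R$ collects all terms containing at least one factor of $\gamma_i$.

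The main obstacle is to verify that the leading term is not annihilated by $R$. Here I would use the natural multi-grading on the Künneth decomposition of $H^*(B;\F_p) \otimes \bigotimes_i H^*(\deletedJoin{K_i}{r_i} \times_{\Z_p^{\alpha_i}} E\Z_p^{\alpha_i};\F_p)$: the leading term sits in multi-degree $(\deg e(\xi)^m,\deg\alpha_0,\dots,\deg\alpha_m)$, whereas every summand of $R$ has strictly larger degree in the $B$-coordinate (because each $\gamma_i$ contributes positive $B$-degree while $e(\xi)^m$ is already present as a factor). Hence no cancellation between the leading term and $R$ is possible, and it suffices to check that $e(\xi)^m \otimes \alpha_0 \otimes \cdots \otimes \alpha_m \neq 0$. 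But this is immediate from the Künneth isomorphism, since each tensor factor is nonzero by assumption and by Lemma \ref{lem:euler class of Wr x Delta}, completing the argument.
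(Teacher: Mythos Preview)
Your argument is correct and follows essentially the same approach as the paper: both decompose \eqref{eq:big W bundle} as a Whitney sum, identify $W_{m+1}(\xi)\cong\xi^{\oplus m}$ to get the factor $e(\xi)^m$, and invoke Lemma \ref{lem:euler class of Wr x Delta} for each remaining summand. Your multi-grading argument to rule out cancellation makes explicit what the paper leaves implicit in the phrase ``it is enough to show,'' so your write-up is in fact slightly more complete.
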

\begin{proof}
    The bundle \eqref{eq:big W bundle} is the pullback of the product bundle \eqref{eq:product-big-bundle} along $\Delta \times \id$, which is a map inducing the cup product on cohomology of $B$ and is the identity on other components. Therefore, by multiplicativity of the Euler class, it is enough to show that the bundle $W_{m+1}(\xi)$ over $B$ has a non-trivial mod $p$ Euler class and that, for each $i = 0, \dots, m$, the bundle
    \[
        \big(W_{r_i}(\xi \oplus \varepsilon) \times \deletedJoin{K_i}{r_i}\big) \times_{\Z_p^{\alpha_i}} E\Z_p^{\alpha_i},
    \]
    with the base space $B \times \big(\deletedJoin{K_i}{r_i} \times_{\Z_p^{\alpha_i}} E\Z_p^{\alpha_i} \big)$,
    has the mod $p$ Euler class with a nontrivial term in the tensor product component
    \[
        H^*(\deletedJoin{K_i}{r_i} \times_{\Z_p^{\alpha_i}} E\Z_p^{\alpha_i};\F_p) \subseteq H^*(B;\F_p) \otimes H^*(\deletedJoin{K_i}{r_i} \times_{\Z_p^{\alpha_i}} E\Z_p^{\alpha_i};\F_p).
    \]
    The former is indeed true, since $W_{m+1}(\xi) \cong \xi^{\oplus m}$ and by our assumption
    \[
        e(W_{m+1}(\xi)) = e(\xi)^m \neq 0 \in H^*(B; \F_p).
    \]
    The latter is true by Lemma \ref{lem:euler class of Wr x Delta}, which finishes the proof.
\end{proof}

Our next lemma on the nonvanishing power of the top Chern class mod $p$ is essentially classically known \cite[Ch.~III]{hiller1982geometry}. However, we include a proof for the sake of completeness. Analogous statements for real (and oriented) Grassmannians were treated in \cites{Zivaljevic1999, Karasev:2007ib}.

\begin{lemma} \label{lem:euler class of C-grassmannian}
    Let $1 \le n \le d$ be integers and $\gamma_n^\C$ the tautological bundle of the Grassmannian $G_n(\C^d)$. Then, the mod $p$ reduction of the Euler class $e(\gamma_n^\C)$ satisfies
    \[
        e(\gamma_n^\C)^{d-n} \neq 0 \in H^*(G_n(\C^d);\F_p),
    \]
    for any prime number $p$.
\end{lemma}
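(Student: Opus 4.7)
Since $\gamma_n^\C$ is a complex vector bundle of rank $n$, its Euler class coincides with its top Chern class, $e(\gamma_n^\C) = c_n(\gamma_n^\C) =: c_n$. The plan is to show that $c_n^{d-n}$ is $\pm 1$ times a generator of the integral top cohomology $H^{2n(d-n)}(\grassmannC{n}{d}; \Z) \cong \Z$. Since this group is torsion-free, its mod $p$ reduction is a generator of $H^{2n(d-n)}(\grassmannC{n}{d}; \F_p) \cong \F_p$, and in particular nonzero for every prime $p$; this immediately yields the lemma.

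To compute $c_n^{d-n}$, I would appeal to Schubert calculus. From the Whitney sum formula applied to the tautological sequence $0 \to \gamma_n^\C \to \C^d \to (\gamma_n^\C)^{\perp} \to 0$ (with $\C^d$ denoting the trivial bundle), together with the standard identification of the special Schubert classes as $\sigma_{(k)} = c_k((\gamma_n^\C)^{\perp})$, expanding the inverse power series $c(\gamma_n^\C) = c((\gamma_n^\C)^\perp)^{-1}$ yields $c_k(\gamma_n^\C) = (-1)^k \sigma_{(1^k)}$, where $\sigma_{(1^k)}$ is the Schubert class indexed by the column of height $k$. In particular $c_n = (-1)^n \sigma_{(1^n)}$. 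Iterating Pieri's rule inside the bounding $n \times (d-n)$ Young diagram, the only vertical strip of length $n$ one can attach to the rectangle $(k^n)$ is the one producing $((k+1)^n)$, so
\[
    \sigma_{(1^n)}^{d-n} = \sigma_{((d-n)^n)},
\]
which is the Schubert class of a point in $\grassmannC{n}{d}$. Therefore $c_n^{d-n} = (-1)^{n(d-n)} \sigma_{((d-n)^n)} = \pm [\pt]$ is a generator of the top integral cohomology, as required.

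The only mild obstacle is justifying the two Schubert calculus inputs (the formula $c_k(\gamma_n^\C) = (-1)^k \sigma_{(1^k)}$ and the iteration of Pieri's rule), both of which are classical. If one prefers to bypass them, the identity $c_n^{d-n} = e(\gamma_n^{\C,\oplus(d-n)})$ permits a direct geometric computation: for generic linearly independent $v_1, \dots, v_{d-n} \in \C^d$, the section $V \mapsto (\pi_V(v_1), \dots, \pi_V(v_{d-n}))$ of $\gamma_n^{\C,\oplus(d-n)}$ vanishes only at $V_0 := \spanC\{v_1, \dots, v_{d-n}\}^{\perp_{\C}}$; viewing its linearization at $V_0$ as an $\R$-linear map $\mathrm{Hom}_\C(V_0, V_0^{\perp_{\C}}) \to V_0^{d-n}$, an elementary check (using that $v_1, \dots, v_{d-n}$ form a $\C$-basis of $V_0^{\perp_{\C}}$) shows it is bijective, so $V_0$ is a transverse zero and $c_n^{d-n} = \pm[\pt]$ once again.
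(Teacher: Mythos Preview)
Your proof is correct, but it takes a genuinely different route from the paper's. Both you and the paper reduce to showing that $c_n^{d-n}$ is $\pm 1$ times a generator of the top integral cohomology (equivalently, not a non-invertible integer multiple of another class), so that its mod $p$ reduction survives. You then identify $c_n^{d-n}$ directly with $\pm[\pt]$, either via Schubert calculus (writing $c_n = (-1)^n\sigma_{(1^n)}$ and iterating the dual Pieri rule inside the $n\times(d-n)$ rectangle) or via a transversality count of zeros of an explicit generic section of $(\gamma_n^\C)^{\oplus(d-n)}$. The paper instead pulls back along the quotient $q_n\colon W_n(\C^d)/U(1)^n \to \grassmannC{n}{d}$, uses the relation $q_n^*(c_n)=u_1\cdots u_n$ (already set up in Section~\ref{sec:notation}), and argues by induction on $n$, via Leray--Hirsch for the tower of projectivized bundles, that $(u_1\cdots u_n)^{d-n}$ is primitive in $H^*(W_n(\C^d)/U(1)^n;\Z)$. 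Your Schubert approach is shorter and pins down the class exactly, at the cost of importing Pieri's rule; your transversality argument is the most elementary of the three and entirely self-contained. The paper's approach stays within the bundle-theoretic language already developed in the manuscript and avoids both Schubert calculus and differential-topological transversality.
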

\begin{proof}
    Since integral cohomology of $G_n(\C^d)$ appears only in even degrees and is free (see Section \ref{sec:notation}), by the universal coefficient theorem \cite[Thm.~5.5.10]{spanier1989algebraic}, we have 
    \[
        H^*(G_n(\C^d);\F_p) \cong H^*(G_n(\C^d);\Z)\otimes \F_p,
    \]
    and the claim follows if we show that 
    \[
        e(\gamma_n^\C)^{d-n} = c_n^{d-n} \in H^*(G_n(\C^d);\Z) \cong \Z[c_1, \dots, c_n]/I_n
    \]
    does not equal to another class multiplied by an integer distinct from $\pm 1$.
    To shorten the notation, we will assume integral cohomology coefficients for the rest of the proof.

    Consider the quotient map 
    \[
        q_n \colon W_n(\C^d)/U(1)^n \longrightarrow W_n(\C^d)/U(n) = \grassmannC{n}{d}
    \]
    obtained from the diagonal inclusion $U(1)^n \hookrightarrow U(n)$. We will observe how $c_n^{d-n}$ is mapped via pullback $q_n^*$. Namely, as a first step, let us show that
    \begin{equation} \label{eq:cohomology of flag}
        H^*(W_n(\C^d)/U(1)^n) \cong \Z[u_1, \dots, u_n]/J_n,
    \end{equation}
    with $|u_1| = \cdots = |u_n| = 2$ and $J_n$ an ideal such that $(u_1 \dots u_n)^{d-n}$ is not a non-invertible integer multiple of another class in \eqref{eq:cohomology of flag}. We will do so by induction on $n \ge 1$. 
    
    In the next step we will show that $q_n$ induces a map in cohomology which satisfies $q_n^*(c_n) = u_1 \dots u_n$. This would finish the proof, as then $c_n^{d-n}$ could not be a non-invertible integer multiple of another class since $(u_1 \cdots u_n)^{d-n}$ is not.

    For the base case $n=1$, we have $W_1(\C^d)/U(1) = \CP^{d-1}$ and 
    \[
        H^*(\CP^{d-1}) \cong \Z[u_1]/(u_1^d),
    \]
    where $|u_1|=2$. Thus, $u_1^{d-1}$ is the generator of the degree $2(d-1)$ cohomology.

    Assume $n \ge 1$ and that the claim holds for $n-1$. Let 
    \[
        \nu \coloneq q_{n-1}^*((\gamma_{n-1}^{\C})^{\perp})
    \]
    be the pullback of the orthogonal complement $(\gamma_{n-1}^{\C})^{\perp}$ of the canonical bundle $\gamma_{n-1}^{\C}$ over $\grassmannC{n-1}{d}$. Let $U(1)^{n-1} \subseteq U(1)^n$ be the inclusion on the first $n-1$ coordinates. The projection from an $n$-frame to an $(n-1)$-frame forgetting the $n$-th vector induces the projection map of the sphere bundle
    \begin{equation*}
        S(\nu) \colon~ S(\C^{d-n+1}) \longrightarrow W_n(\C^d)/U(1)^{n-1} \longrightarrow W_{n-1}(\C^d)/U(1)^{n-1}.
    \end{equation*}
    Dividing by the fiberwise action of $U(1)$ acting on the $n$-th vector in $W_n(\C^d)$, we get the complex-projectivized bundle 
    \begin{equation*}
        \PP_\C(\nu) \colon~ \CP^{d-n} \longrightarrow W_n(\C^d)/U(1)^{n} \longrightarrow W_{n-1}(\C^d)/U(1)^{n-1}.
    \end{equation*}
    As a consequence of the Leray--Hirsch theorem (see \cite[proof of Thm.~3.2, pg.~81]{hatcher2003vector}) it follows that the cohomology of the total space of the projectivization of $\nu$ is expressed as
    \begin{align*}
        &H^*(W_n(\C^{d})/U(1)^n)\\ 
        &\cong H^*(W_{n-1}(\C^{d})/U(1)^{n-1}) \otimes \Z[u_n]/(u_n^{d-n+1}-u_n^{d-n}c_1(\nu) + \dots + (-1)^{d-n+1}c_{d-n+1}(\nu)),
    \end{align*}
    where $|u_n| = 2$ and $c_i(\nu)$ is the $i$-th Chern class of $\nu$. In particular, we obtain \eqref{eq:cohomology of flag} by setting
    \[
        J_n \coloneqq (J_{n-1}, u_n^{d-n+1}-u_n^{d-n}c_1(\nu) + \dots + (-1)^{d-n+1}c_{d-n+1}(\nu)).
    \]
    It follows form the induction hypothesis that $(u_1 \cdots u_{n-1})^{d-n} \in \Z[u_1, \dots, u_{n-1}]/J_{n-1}$ is not a non-invertible integer multiple of another element. The same holds for $(u_1 \cdots u_{n})^{d-n} \in \Z[u_1, \dots, u_{n}]/J_{n}$, due to the fact that from the above formula follows that $H^*(W_n(\C^{d})/U(1)^n)$ is a free module over $H^*(W_{n-1}(\C^{d})/U(1)^{n-1})$ with basis $1, u_n, \dots, u_n^{d-n}$. This finishes the induction.

    To complete the proof, as was already explained, we need to prove that $q_n$ induces a map in cohomology which satisfies $q_n^*(c_n)=u_1\cdots u_n$.
    To see this, notice that $q_n$ fits in the diagram
    \begin{equation*}
        \begin{tikzcd}
            W_n(\C^d)/U(1)^n \arrow[r, hook] \arrow[d, "q_n"] & W_n(\C^{\infty})/U(1)^n = BU(1)^n \arrow[d, "r_n"] \\
            W_n(\C^d)/U(n) \arrow[r, hook] & W_n(\C^{\infty})/U(n)=BU(n),
        \end{tikzcd}
    \end{equation*}
    where the horizontal maps are the classifying maps induced by the inclusion $W_n(\C^d) \hookrightarrow W_n(\C^{\infty})$ and $r_n$ is the canonical projection induced from the inclusion $U(1)^n \subseteq U(n)$ (also defined in Section \ref{sec:notation}).
    We obtain the claim by applying the cohomology functor and noticing that in the diagram
    \begin{equation*}
        \begin{tikzcd}
            \Z[u_1, \dots, u_n]/J_n  & \Z[u_1, \dots, u_n] \arrow[l, twoheadrightarrow] \\
            \Z[c_1, \dots, c_n]/I_n \arrow[u, "q_n^*"]  & \Z[c_1, \dots, c_n], \arrow[l, twoheadrightarrow] \arrow[u, "r_n^*"]
        \end{tikzcd}
    \end{equation*}
    we have $r_n^*(c_n)=u_1 \cdots u_n$, as already observed in Section \ref{sec:notation}.
\end{proof}

For the orthogonal complement $(\gamma_n^\C)^{\perp}$ of $\gamma_n^\C$, let
\begin{equation*}
    \PP_\R((\gamma_n^\C)^{\perp}) = \{(V, \ell) \in \grassmannC{n}{d} \times \RP^{2d-1} \colon~ V \perp \ell\}
\end{equation*}
be its real projectivization. As a consequence of the Leray-Hirsch theorem \cite[proof of Thm.~3.1]{hatcher2003vector}, it follows that
\begin{equation*}
    H^*(\PP_\R((\gamma_n^\C)^{\perp});\F_2) \cong H^*(\grassmannC{n}{d};\F_2) \otimes \F_2[x]/\Big(\sum_{i=0}^{n-d}x^{2(n-d-i)} w_{2i}((\gamma_n^\C)^{\perp})\Big),
\end{equation*}
where $x$ is the first Stiefel-Whitney class of the line bundle associated to the twofold covering
\[
    S((\gamma_n^\C)^{\perp}) \longrightarrow S((\gamma_n^\C)^{\perp})/\Z_2 = \PP_\R((\gamma_n^\C)^{\perp}).
\]
Notice that odd Stiefel--Whitney classes of the complex bundle $(\gamma_n^\C)^{\perp}$ are trivial and the class $x$ is also the pullback of the cohomology generator along the projection 
\[
    \pi_2 \colon \PP_\R((\gamma_n^\C)^{\perp}) \longrightarrow \RP^{2d-1},~(V, \ell) \longmapsto \ell.
\]
If we denote by 
\[
    \pi_1 \colon \PP_\R((\gamma_n^\C)^{\perp}) \longrightarrow \grassmannC{n}{d},~(V, \ell) \longmapsto V
\]
the other projection and by $\gamma_1^{\R}$ the canonical line bundle over $\RP^{2d-1}$,  then
\begin{equation*}
    \pi_1^*(\gamma^\C_n) \oplus  \pi_2^* (\gamma_1^{\R}) = \{(V, \ell, z) \in \grassmannC{n}{d} \times \RP^{2d-1} \times \C^d \colon~ V \perp \ell,~ z \in V \oplus \ell\}
\end{equation*}
is the canonical bundle over $\PP_\R((\gamma_n^\C)^{\perp})$. We now state a result on non-vanishing power of its mod 2 Euler class.

\begin{corollary} \label{cor:euler class mod 2 of 2m + 1 grassmannian}
    Let $1 \le n < d$ be integers. Let $\gamma_n^\C$ and $\gamma_1^\R$ denote the canonical bundles over $\grassmannC{n}{d}$ and $\RP^{2d-1}$, respectively. Then, the mod 2 Euler class of the canonical bundle $\pi_1^*(\gamma_n^\C) \oplus  \pi_2^* (\gamma_1^{\R})$ over $\PP_\R((\gamma_n^\C)^{\perp})$ satisfies
    \[
        e(\pi_1^*(\gamma_n^\C) \oplus  \pi_2^* (\gamma_1^{\R}))^{d-n} \neq 0 \in H^*(\PP_\R((\gamma_n^\C)^{\perp}); \F_2).
    \]
\end{corollary}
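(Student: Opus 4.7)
The plan is to reduce the claim to the non-vanishing result of Lemma \ref{lem:euler class of C-grassmannian} by exploiting both the multiplicativity of the Euler class on direct sums and the free-module structure of $H^*(\PP_\R((\gamma_n^\C)^{\perp});\F_2)$ over $H^*(\grassmannC{n}{d};\F_2)$ provided by the Leray--Hirsch theorem.

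First, by multiplicativity of the Euler class on a direct sum,
\[
    e(\pi_1^*(\gamma_n^\C) \oplus \pi_2^*(\gamma_1^{\R})) = e(\pi_1^*(\gamma_n^\C))\cdot e(\pi_2^*(\gamma_1^{\R})) \in H^*(\PP_\R((\gamma_n^\C)^{\perp}); \F_2).
\]
For the left factor, the mod $2$ Euler class of a complex rank-$n$ bundle coincides with its top Stiefel--Whitney class $w_{2n}$, which in turn equals the mod $2$ reduction of the top Chern class $c_n$; hence $e(\pi_1^*(\gamma_n^\C)) = \pi_1^*(c_n \bmod 2)$. For the right factor, the Euler class of a real line bundle equals its first Stiefel--Whitney class, so $e(\pi_2^*(\gamma_1^{\R})) = \pi_2^*(w_1(\gamma_1^{\R})) = x$, where $x$ is the generator described just before the statement. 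Raising to the $(d-n)$-th power therefore yields
\[
    e\bigl(\pi_1^*(\gamma_n^\C) \oplus \pi_2^*(\gamma_1^{\R})\bigr)^{d-n} = \pi_1^*(c_n^{\,d-n}) \cdot x^{d-n}.
\]

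Next, I would invoke the Leray--Hirsch description recalled just before the corollary: $H^*(\PP_\R((\gamma_n^\C)^{\perp});\F_2)$ is a free module over $H^*(\grassmannC{n}{d};\F_2)$ with basis $\{1, x, x^2, \ldots, x^{2(d-n)-1}\}$, the relation expressing $x^{2(d-n)}$ as an $H^*(\grassmannC{n}{d};\F_2)$-linear combination of strictly lower powers of $x$. Since $n < d$ gives $d-n \le 2(d-n)-1$, the monomial $x^{d-n}$ is one of the free basis elements, so the element $\pi_1^*(c_n^{\,d-n}) \cdot x^{d-n}$ is non-zero in $H^*(\PP_\R((\gamma_n^\C)^{\perp});\F_2)$ if and only if $c_n^{\,d-n}$ is non-zero in $H^*(\grassmannC{n}{d};\F_2)$.

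Finally, this last non-vanishing is precisely the content of Lemma \ref{lem:euler class of C-grassmannian} for the prime $p=2$, and the corollary follows. I do not anticipate any genuine obstacle: the only thing to double-check is the mod $2$ identification of $e(\gamma_n^\C)$ with $c_n$ (standard) and the fact that $d-n$ is strictly smaller than the dimension $2(d-n)$ in which the Leray--Hirsch polynomial relation lives, which uses only the hypothesis $n<d$.
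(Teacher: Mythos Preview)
Your proof is correct and follows essentially the same approach as the paper: both use multiplicativity of the Euler class to reduce to showing $w_{2n}^{d-n}x^{d-n}\neq 0$, invoke the Leray--Hirsch description of $H^*(\PP_\R((\gamma_n^\C)^{\perp});\F_2)$, and appeal to Lemma~\ref{lem:euler class of C-grassmannian} for the non-vanishing of $c_n^{d-n}$ mod $2$. Your phrasing via the free-module basis $\{1,x,\dots,x^{2(d-n)-1}\}$ is somewhat cleaner than the paper's explicit polynomial-division argument, but the substance is identical.
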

\begin{proof}
    Recall that
    \[
        H^*(\grassmannC{n}{d};\F_2) \cong \F_2[w_2, w_4, \dots, w_{2n}]/I_n,
    \]
    for an ideal $I_n$ (see Section \ref{sec:notation}).
    By the multiplicativity property of the Euler class, we need to show
    \[
        e(\pi_1^*(\gamma_n^\C) \oplus  \pi_2^* (\gamma_1^{\R}))^{d-n} = w_{2n}^{d-n}x^{d-n} \neq 0 \in \F_2[x, w_2, w_4, \dots, w_{2n}]/(I_n, \sum_{i=0}^{n-d}x^{2(n-d-i)} w_{2i}((\gamma_n^\C)^{\perp})).
    \]
    If we assume the contrary, then
    \[
        w_{2n}^{d-n}x^{d-n} = \sum_{i \ge 0}x^i \iota_i  + q \sum_{i=0}^{n-d}x^{2(n-d-i)} w_{2i}((\gamma_n^\C)^{\perp}) \in \F_2[x, w_2, w_4, \dots, w_{2n}],
    \]
    for a polynomial $q \in \F_2[x, w_2, w_4, \dots, w_{2n}]$ and some $\iota_0, \iota_1, \dots \in I_n$, which are zero after a large enough index. 
    
    From Lemma \ref{lem:euler class of C-grassmannian} it follows that $w_{2n}^{d-n} \notin I_n$, hence $q \neq 0$, so we may write $q=q_sx^s + \dots + q_0$, for $q_0, \dots, q_s \in \F_2[w_2, \dots, w_{2n}]$ with $q_s \neq 0$. Without loss of generality, we may also assume $q_s \notin I_n$, because otherwise we could add it to the $\iota_i$'s. It now follows that the monomial $x^{2(n-d)+s}$ has a coefficient $q_s+\iota_{2(n-d)+s} \neq 0$ on the right hand side of the equation and zero on the left hand side, yielding a contradiction.
\end{proof}

\section*{Acknowledgment}

The authors would like to thank the anonymous referee for their comments, which improved the exposition of the manuscript.

\bibliographystyle{plain}
\bibliography{references.bib}

\end{document}